\newtheorem{theorem}{Theorem}[section]
\newtheorem{definition}[theorem]{Definition}
\newtheorem{lemma}[theorem]{Lemma}
\newtheorem{proposition}[theorem]{Proposition}
\newtheorem{corollary}[theorem]{Corollary}
\newtheorem{claim}[theorem]{Claim}
\theoremstyle{definition}
\numberwithin{equation}{section}
\newtheorem{remark}[theorem]{Remark}
\newcommand{\R}{\mathbb{R}}
\newcommand{\Sphere}{\mathbb{S}^{n - 1}}
\newcommand{\vtil}{\widetilde{\mathrm{V}}}
\newcommand{\esssup}{\operatorname*{ess\,sup}}
\newcommand{\dila}{\mathscr{d}}
\newcommand{\Sset}{\mathscr S}
\begin{document}

\begin{center}
{\LARGE Power--law asymptotics of \\ fractional $L^p$ polar projection bodies }
\end{center}
\smallskip
\begin{center}
{\Large \textsc{Tr\'i Minh L\^e}}
\end{center}

\bigskip

\noindent\textbf{Abstract.} 
The notion of $s$--fractional $L^p$ polar projection bodies, recently introduced by Haddad and Ludwig (Math.\ Ann.\ \textbf{388}:1091--1115, 2024), provides a bridge between fractional Sobolev theory and convex geometry.
In this manuscript, we study the limit of their Minkowski gauges under two natural asymptotic regimes: 
\\
\hspace*{3em} (a) first sending $p \to \infty$ and then $s \to 1^-$; \\ 
\hspace*{3em} (b) first sending $s \to 1^-$ and then $p \to \infty$. \\
Our main result shows that these two limiting processes commute.
As a consequence, we derive precise asymptotic behavior for the associated volumes and dual mixed volumes, thereby linking the (fractional) $L^p$ polar projection bodies to the newly introduced (fractional)  $L^\infty$ polar projection bodies.
These results further yield new geometric inequalities, including endpoint Lipschitz/Hölder isoperimetric--type and variants of Pólya--Szeg\H{o} inequalities in the $L^\infty$ setting.
\bigskip

\noindent\textbf{Key words.} Star bodies, Fractional Sobolev spaces, Dual mixed volumes, P\'olya--Szeg\H{o} inequality.

\vspace{0.6cm}

\noindent\textbf{AMS Subject Classification} \ \textit{Primary} 46E35, 52A40
\textit{Secondary} 35R11, 26D10


\section{Introduction}

Fix $p \in [1, + \infty)$ and $f \in W^{1, p}(\R^n)$.
The \textit{$L^p$ polar projection body $\Pi^\ast_p \, f$} is a star body whose gauge function is determined by
	\begin{align*}
		\| \xi \|_{\Pi^\ast_p \, f}^p := \int_{\R^n} |\langle \nabla f(x), \xi \rangle|^p \, dx \quad \text{ for every } \xi \in \R^n.
	\end{align*}
These star bodies have been used to translate affine Sobolev inequalities into geometric ones, which also provides a more natural understanding of the equality case.
They therefore capture the geometric essence behind the affine Sobolev inequality, see e.g \cite{HS_2009, LYZ_2002, Talenti_1976, Zhang_1999}.
Motivated by this geometric connection, Haddad and Ludwig in ~\cite{HL_2024,HL_2025} recently introduced the \textit{$s$--fractional $L^p$ polar projection bodies} $\Pi^{\ast, s}_p \, f$ with $s \in (0, 1)$, whose gauge function is defined by
\begin{align*}
    \| \xi \|^{sp}_{\Pi^{\ast, s}_p \, f} := p(1 - s) \int_0^\infty t^{-sp - 1} \int_{\R^n} |f(x + t\xi) - f(x)|^p \, dx \, dt \quad \text{ for every } \xi \in \R^n.
\end{align*}
This star body plays a fundamental role in transferring functional inequalities into a geometric setting, as it allows the affine fractional $L^p$ Sobolev inequalities to be expressed equivalently as geometric inequalities.
They also showed that, at the Minkowski gauge level, the limit of $\Pi^{\ast, s}_p \, f$ as $s \to 1^-$ is precisely $\Pi^\ast_p \, f$, thereby establishing a bridge between the fractional $L^p$ polar projection bodies and their classical $L^p$ counterparts.
A key motivation comes from the Bourgain--Brezis--Mironescu formula~\cite{BBM_2001}, which connects the Gagliardo seminorm and the $L^p$ norm of the gradient:
	\begin{align*}
		\lim_{s \to 1^-} p(1 - s) \int_{\R^n} \int_{\R^n} \dfrac{|f(x) - f(y)|^p}{|x - y|^{n + ps}} \, dx \, dy = \alpha_{n, p} \int_{\R^n} | \nabla f(x) |^p dx \quad \text{for every $f \in W^{1, p}(\R^n)$,}
	\end{align*}
where the constant $\alpha_{n, p}$ is determined via
	\[
		\alpha_{n, p} = \int_{\Sphere} | \langle \xi, \eta \rangle |^p \, d\xi, \text{ for every } \eta \in \Sphere.
	\]
Beyond this asymptotic regime, the limit of Gagliardo seminorm as $s \to 0^+$ (under an appropriate scaling), the asymmetric/anisotropic analogues and similar observations in the framework of $\Gamma$--convergence have also been observed, see~\cite{ADM_2011, CLKNP_2023, L_2014, M_2014, MS_2002}.

\medskip

Studying the limit of functionals and equations in $L^p$ spaces as $p \to \infty$ is ubiquitous in Calculus of Variations and PDEs, in which, at the limit, it often reveals extremal structures and degenerate equations (infinity Laplace equations), see e.g. \cite{ACJ_2004, Yu_2006}.
In the fractional setting, it is proved that solutions of the fractional $p$--Laplace equation converge to solutions of the so--called H\"older infinity Laplace equation, see~\cite{CLM_2012}.

\medskip

Furthermore, the interchangeability of limits is a classical and important issue in analysis, e.g Moore--Osgood theorem.
In the last twenty years, such phenomena have also attracted considerable attention in \emph{homogenization theory}, where one often studies the interplay between two limiting processes---the small-scale parameter and the exponent $p$ in $L^p$--in order to capture the correct asymptotic behavior of local/nonlocal integral functionals via $\Gamma$--convergence, see e.g~\cite{BN_2008,  BPG_2004, CP_2005, DEZ_2024, KZ_2020}.
For instance, one often considers variational energies of the form (under suitable assumptions on $\Omega$ and $H$)
\[
f \mapsto \int_\Omega H(x/\varepsilon, \nabla f(x))^p\,dx
\]
and observes whether the limits $\varepsilon\to0$ and $p\to\infty$ commute.
Analogous phenomena have recently been investigated in the context of optimal transportation, see~\cite{BCP_2024}.

\medskip

In light of these examples, it is natural to ask whether similar exchanging limit of two parameters phenomena arise in the setting of $s$--fractional $L^p$ polar projection bodies.
If we fix $f$ in $W^{1, 1}(\R^n)~ \cap~ W^{1, \infty}(\R^n)$, it is straightforward to see that the $L^p$ norm of $\nabla f$ converges to $L^\infty$ norm of $\nabla f$.
This simple observation, together with Haddad--Ludwig's result, shows that the process of taking first the limit as $s \to 1^-$ and then $p \to \infty$ for $\Pi^{\ast, s}_p \, f$ is reasonably well understood in isolation; however, their combined behavior has not yet been well investigated.
As we have said, such $''$double--limit$''$ problems often exhibit subtle interactions--in general, interchanging the order of limits may lead to different values.
This motivates the guiding question of the present work:
\begin{center}
\textit{
   Can the limit process of \( \Pi^{\ast, s}_p \, f \) as \( s \to 1^- \) and \( p \to \infty \) be interchanged? \\ \smallskip 
   What are the respective limits?
}
\end{center}

\medskip

Resolving this double-limit problem is not merely a technical refinement. 
The interaction between the parameters $s$ and $p$ encodes a structural rigidity of fractional $L^p$ polar projection bodies, ensuring that the theory extends coherently to the $L^\infty$ setting. 
In this sense, establishing the commutativity of the two limiting processes represents a necessary advance: it completes the asymptotic picture and provides the foundation for geometric inequalities in the anisotropic $L^\infty$ setting. 

\medskip

\textbf{Main contributions}.

\medskip

Our first contribution investigates the limits~(2),~(3), and~(4) in Figures~\ref{fig:two-limit-diagrams}, complementing the result of~\cite{HL_2024, HL_2025} which established limit~(1) in Figure \ref{fig:two-limit-diagrams} and \ref{fig:vtil^1/p}.

\medskip

For a fixed function $f \in W^{1, 1}(\R^n) \cap W^{1, \infty}(\R^n)$, concerning the $s$--fractional $L^p$ polar projection body $\Pi^{\ast, s}_p \, f$, our first main result establishes that the two limiting procedures:
\[
\,^{\prime\prime} \text{ first } p \to \infty  \text{ then } s \to 1^{-} \,^{\prime\prime}
\qquad \text{ and } \qquad
\,^{\prime\prime} \text{ first } s \to 1^{-} \text{ then } p \to \infty \,^{\prime\prime}
\]
\emph{commute} when evaluated at the level of Minkowski gauges. 
This compatibility result connects the fractional \( L^p \) polar projection bodies \( \Pi^{\ast, s}_p f \) (respectively their $L^p$ counterparts \( \Pi^\ast_p f \) ) and the newly introduced \( L^\infty \) variants \( \Pi^{\ast, s}_\infty f \) (respectively \( \Pi^\ast_\infty f \)), see Definitions~\ref{def.fracLp} and~\ref{def.Linfty}.

\medskip

Let one of the following conditions hold true:
\begin{itemize}
	 \item[(a)] $q > n$ and $K$ is a star body;
	 \item[(b)] $q \in (- \infty, n) \setminus \{ 0 \}$ and $K$ is a bounded star body.
\end{itemize} 
Under these assumptions, the complete asymptotic picture of the volume $| \Pi^{\ast, s}_p \, f|$ and dual mixed volume $\vtil_q(K, \Pi^{\ast, s}_p \, f)$ (see Definition~\eqref{vtil}) is summarized in Figure~\ref{fig:two-limit-diagrams}, in which arrows denote the order of limits.
The rigorous statements are provided in Theorems~\ref{thm.volume}--\ref{thm.mixed}.

\begin{figure}[H]
\centering

\begin{minipage}{0.45\textwidth}
\centering
\begin{tikzpicture}[>=latex, scale=1.2]

\node (A) at (0, 2.5) {$|\Pi^{\ast, s}_p f|$};
\node (B) at (4.0, 2.5) {$|\Pi^{\ast}_p f|$};
\node (C) at (0, 0) {$| \Pi^{\ast, s}_\infty f|$};
\node (D) at (4.0, 0) {$|\Pi^\ast_\infty f|$};

\draw[->] (A) -- (B) node[midway, above] {(1)} node[midway, below] {$s \to 1^-$};
\draw[->] (C) -- (D) node[midway, above] {$s \to 1^-$} node[midway, below] {(3)};

\draw[->] (A) -- (C) node[midway, left] {(2)}
                      node[midway, rotate=0, xshift= 4.5ex] {$p \to \infty$};
\draw[->] (B) -- (D) node[midway, right] {(4)}
                      node[midway, rotate=0, xshift=-4.5ex] {$p \to \infty$};

\end{tikzpicture}
\end{minipage}
\hspace{1cm}
\begin{minipage}{0.45\textwidth}
\centering
\begin{tikzpicture}[>=latex, scale=1.2]

\node (A) at (0, 2.5) {$\vtil_q(K, \Pi^{\ast, s}_p f)$};
\node (B) at (4.0, 2.5) {$\vtil_q(K, \Pi^\ast_p \, f)$};
\node (C) at (0, 0) {$\vtil_q(K, \Pi^{\ast, s}_\infty f)$};
\node (D) at (4.0, 0) {$\vtil_q(K, \Pi^\ast_\infty \, f)$};

\draw[->] (A) -- (B) node[midway, above] {(1)} node[midway, below] {$s \to 1^-$};
\draw[->] (C) -- (D) node[midway, above] {$s \to 1^-$} node[midway, below] {(3)};

\draw[->] (A) -- (C) node[midway, left] {(2)}
                      node[midway, rotate=0, xshift=4.5ex] {$p \to \infty$};
\draw[->] (B) -- (D) node[midway, right] {(4)}
                      node[midway, rotate=0, xshift=-4.5ex] {$p \to \infty$};

\end{tikzpicture}
\end{minipage}

\caption{The power-law limit diagrams for volume/dual mixed volume related to $\Pi^{\ast, s}_p \, f$}
\label{fig:two-limit-diagrams}
\end{figure}
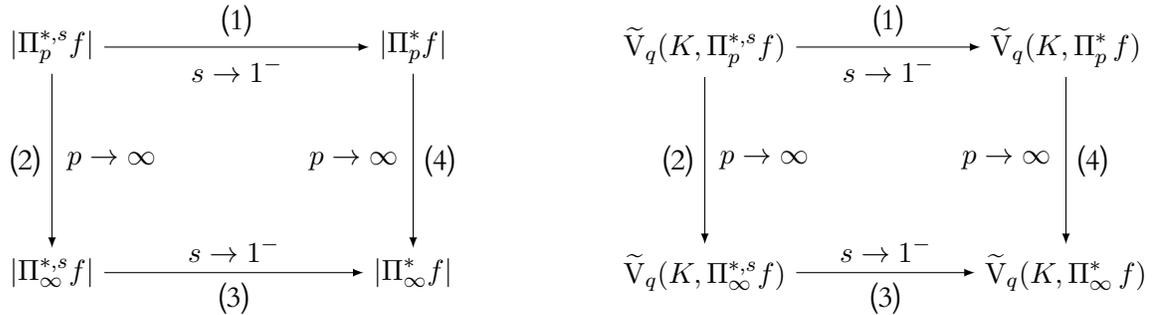

\medskip

For any bounded star body $K$, our second main result describes the asymptotic behavior of
\[
\widetilde{V}_{-sp}(K, \Pi^{\ast,s}_p f)^{1/p}
\]
To this end, we introduce the notion of the $s$--scaled dilation factor $\dila_s(K,L)$ (see Definition~\ref{def.dila}).
Figure~\ref{fig:vtil^1/p} illustrates the complete asymptotic behavior of $\widetilde{V}_{-sp}(K, \Pi^{\ast,s}_p f)^{1/p}$, see Theorem~\ref{thm.mixed-dila} for details.
We further extend the asymptotic analysis for asymmetric variants of $s$--fractional $L^p$ polar projection bodies and as a by-product, we establish the associated geometric inequalities, see~Section~\ref{sec:asymmetric}.

\begin{figure}[H]
\centering
\begin{tikzpicture}[>=latex, scale=1.4]

\node (A) at (0, 2.5) {$\vtil_{-sp}(K, \Pi^{\ast, s}_p f)^{1/p}$};
\node (B) at (4.5, 2.5) {$\vtil_{-p}(K, \Pi^{\ast}_p \, f)^{1/p}$};
\node (C) at (0, 0) {$\dila_s(K, \Pi^{\ast, s}_\infty f)$};
\node (D) at (4.5, 0) {$\dila(K, \Pi^\ast_\infty \, f)$};

\draw[->] (A) -- (B) node[midway, above] {(1)} node[midway, below] {$s \to 1^-$};
\draw[->] (C) -- (D) node[midway, above] {$s \to 1^-$} node[midway, below] {(3)};

\draw[->] (A) -- (C) node[midway, rotate = 0, xshift = 4.5ex] {$p \to \infty$}
                      node[midway, left] {(2)};
\draw[->] (B) -- (D) node[midway, right] {(4)}
                      node[midway, rotate= 0, xshift=-4.5ex] {$p \to \infty$};

\end{tikzpicture}
\caption{The power-law limit diagram for $\vtil_{-sp}(K, \Pi^{\ast, s}_p f)^{1/p}$.}
\label{fig:vtil^1/p}
\end{figure}

\medskip

The remainder of the paper is organized as follows. 
In Section~\ref{sec:prelim}, we review basic facts on star bodies and Sobolev spaces. 
Section~\ref{sec:linfty-bodies} introduces the (symmetric) $s$--fractional $L^\infty$ polar projection bodies and establishes their connection with the $L^p$ counterparts. 
Section~\ref{sec:asymptotics} presents the main results on the asymptotic behavior of volumes and dual mixed volumes, proving the compatibility of the two limiting processes. 
In Section~\ref{sec:asymmetric}, we extend the theory to asymmetric $L^\infty$ polar projection bodies and we derive anisotropic versions of the $L^\infty$ P\'olya--Szeg\H{o} inequality.

\section{Preliminaries}\label{sec:prelim}

In this manuscript, we use $| \cdot |$ to denote the Euclidean norm in $\R^n$. For a measurable set $A \subset \R^n$, we write $|A|$ for its volume (i.e. Lebesgue measure). 
For each $x \in \R^n$ and $r > 0$, we denote by  $B_r(x)$ the ball centered at $x$ with radius $r$.
In the case $x = 0$ and $r = 1$, the unit ball centered at the origin is simply denoted by $B^n := B_1(0)$, and its boundary, the unit sphere, by $\Sphere = \partial B^n$.
In what follows, we collect some basic facts about star bodies and Sobolev spaces.

\medskip

\textbf{Star bodies, dual mixed volumes and Schwarz symmetrization.}

\medskip

We recall the definition of star bodies and related quantities in convex geometry, see \cite{S_2014} for more details.
A set $K \subset \R^n$ is said to be star--shaped with respect to the origin if $\lambda x \in K$ for every $x \in K$ and $\lambda \in [0, 1]$.
Associated with such a set is the gauge function $\| \cdot \|_K : \R^n \to [0, \infty]$, defined as
	\[
		\| x \|_K := \inf \left\{ \lambda > 0: x \in \lambda K  \right\},
	\]
and the radial function $\rho_K : \R^n \setminus \{ 0 \} \to [0, \infty]$ defined as
	\[
		\rho_K(x) := \sup \left\{  \lambda > 0: \lambda x \in K \right\} = \| x \|_K^{-1}.
	\]
With these notations, the volume of $K$ can be computed via the formula
	\begin{equation}\label{gauge-volume}
		| K | = \dfrac{1}{n} \int_{\Sphere} \| \xi \|_K^{-n} d\xi = \dfrac{1}{n} \int_{\Sphere} \rho_K(\xi)^n d\xi.	
	\end{equation}
A star--shaped set $K$ is called a \textit{star body} if its radial function is strictly positive and continuous on $\R^n \setminus \{ 0 \}$.

\medskip

Let $K$ and $L$ be star bodies and let us fix $q \in \R \setminus \{ 0, n \}$.
The \textit{dual mixed volume} of $K$ and $L$ is defined by
	\begin{equation}\label{vtil}
		\vtil_q(K, L) = \dfrac{1}{n} \int_{\Sphere} \rho_K(\xi)^{n - q} \rho_L(\xi)^q d\xi = \dfrac{1}{n} \int_{\Sphere} \| \xi \|_K^{-n + q} \| \xi \|_L^{-q} d\xi.
	\end{equation}
In particular, when $K = L$, we recover the volume $\vtil_q(K, K) = |K|.$
The theory of dual mixed volume has been developed as a dual theory of the classical intrinsic volumes in convex geometry, see~\cite{Lutwak_75, Lutwak_79, Lutwak_88}.

\medskip

Let $E$ be a measurable set in $\R^n$.
The Schwarz symmetral of $E$, denoted by $E^{\star}$, is the centered Euclidean ball with the same volume as $E$.
Recall that for any nonnegative measurable function $f$ on $\R^n$, one can recover its value through the layer cake formula
	\[
		f(x) = \int_0^\infty \pmb{1}_{ \{ f \geq t \}}(x) \, dt, \quad \text{for almost every $x \in \R^n$.
}
	\]
For such functions $f$, its \textit{Schwarz symmetral} $f^\star$ is determined via the formula
	\[
		f^\star(x) := \int_0^\infty \pmb{1}_{ \{ f \geq t \}^\star}(x) \, dt.
	\]
We refer to \cite{B_2009} for more details on~$f^\star$.

\medskip

\textbf{Sobolev spaces.}

\medskip

Let $A \subset \R^n$ be a Borel measurable set.
For \( 1 \leq p < \infty \), we denote by \( L^p(A) \) the space of measurable functions \( f : A \to \mathbb{R} \) whose norm is given by
\[
\|f\|_{L^p(A)} := \left( \int_{A} |f(x)|^p \, dx \right)^{1/p} < \infty,
\]
and by \( L^\infty(A) \) the space of measurable functions with finite essential supremum, equipped with the norm
\[
\|f\|_{L^\infty(A)} := \operatorname*{ess\,sup}_{x \in A} |f(x)|.
\]
We denote the Sobolev space by
	\begin{equation}\label{sobolev}
		W^{1, p}(\R^n) := \left\{ f \in L^p(\R^n): \, | \nabla f | \in L^p(\R^n) \right\}, \quad \text{ for any fixed $p \in [1, \infty]$.}
	\end{equation}
Further, in the case $p \in [1, \infty)$ and $s \in (0, 1)$, the fractional Sobolev space is defined as
	\[
		W^{s, p}(\R^n) := \left\{ f \in L^p(\R^n): \, [f]_{s, p}^p := \iint_{\R^n \times \R^n} \dfrac{|f(x) - f(y)|^p}{|x - y|^{n + sp}} \, dx \, dy < +\infty \right\},
	\]
while  for $p = \infty$, we define
 	\[
		W^{s, \infty} := \left\{ f \in L^\infty(\R^n): \, f \text{ is $s$--H\"older continuous} \right\}.
	\]
It is important to keep in mind the following inclusions:
    \begin{equation}\label{inclu-01}
        W^{1, 1}(\R^n) \cap W^{1, \infty}(\R^n) \subset W^{1, p}(\R^n) \subset W^{s, p}(\R^n), \quad \text{ for every } s \in (0, 1) \text{ and } p \geq 1,
    \end{equation}
see, e.g. \cite[Proposition 2.2]{NPV_2012} for the second.
Furthermore, by Morrey embedding, 
    \begin{equation}\label{inclu-02}
        W^{1, \infty}(\R^n) 
        \subset 
        \mathrm{Lip}(\R^n) \cap L^\infty(\R^n) \subset 
        W^{s, \infty}(\R^n), \quad \text{ for every } s \in (0, 1),
    \end{equation}
where $\mathrm{Lip}(\R^n)$ denotes the space of Lipschitz functions in $\R^n$.

\section{Variants of (\(s\)--fractional) \(L^\infty\) polar projection bodies}\label{sec:linfty-bodies}

In this section, we introduce variants of the ($s$-fractional) $L^\infty$ polar projection bodies, which play a central role in understanding the limits of their $L^p$ counterparts.
We analyze their basic structural properties and establish the convergence of the associated gauge functions as $s \to 1^-$ and $p \to \infty$, which serve as fundamental tools for the asymptotic analysis developed later.

\begin{definition}[$s$--fractional $L^p$ polar projection body]\label{def.fracLp} Fix $p \in [1, \infty)$ and $s \in (0, 1)$.
	\begin{itemize}
		\item[$\bullet$] For any $f \in W^{s, p}(\R^n)$, its $s$--fractional $L^p$ polar projection body $\Pi^{\ast, s}_p \, f$ is a star body defined via the gauge function
		\begin{equation}\label{frac.Lp}
		\| \xi \|^{ps}_{\Pi^{\ast, s}_p f} = p(1 - s)\int_0^\infty t^{-ps - 1} \int_{\R^n} |f(x + t\xi) - f(x)|^p \, dx \, dt, \quad \text{ for every } \xi \in \R^n.
	\end{equation}
	
		\item[$\bullet$] For any $f \in W^{1, p} (\R^n)$, its $L^p$ polar projection body $\Pi^{\ast}_p \, f$ is a star body defined via the gauge function 
		\begin{equation}
		\| \xi \|_{\Pi^*_p \, f}^p = \int_{\R^n} | \langle \nabla f(x), \xi \rangle |^p dx.
	\end{equation}
	\end{itemize}
\end{definition}

\begin{remark}\normalfont
The anisotropic versions of the Gagliardo seminorm and Sobolev norm are obtained by replacing the Euclidean norm \( |\cdot| \) with the gauge function \(\|\cdot\|_{K}\) 
of a convex body \(K\) in the definitions of \([f ]_{s,p}\) and \(\|\nabla f \|_{L^{p}}\), respectively, see, e.g. \cite{L_2014}. 
Moreover, the corresponding Gagliardo seminorm admits a representation in terms 
of the dual mixed volume (cf.~\cite[Page~1098]{HL_2024}):
\begin{equation*}
	\int_{\R^n} \int_{\R^n} 
	\frac{|f(x) - f(y)|^p}{\| x - y \|_K^{\,n + ps}} \, dx \, dy
	= \int_{\Sphere} \rho_K(\xi)^{\,n + ps} \, 
	\rho_{\Pi^{\ast, s}_p f}(\xi)^{-ps} \, d\xi
	= \dfrac{n}{p(1 - s)}\, \widetilde{V}_{-ps}(K, \Pi^{\ast, s}_p f),
\end{equation*}
where \(K\) is a star body.

\end{remark}

\begin{remark}
For notational convenience, we incorporate the factor \(p(1-s)\) into the definition of the gauge function of \(\Pi^{\ast, s}_p f\). 
This choice keeps the formulas compact; without it, the limits as \(s \to 1^{-}\) would otherwise involve more cumbersome expressions. 
In view of \cite[Theorem~9]{HL_2024} and \cite[Theorem~3]{HL_2025}, it follows that, 
for any \(p \in [1, \infty)\),
\begin{align*}
	\displaystyle \lim_{s \to 1^-} \| \xi \|_{\Pi^{\ast, s}_p f} = \| \xi \|_{\Pi^\ast_p f} \quad \text{ and } \quad
	\displaystyle \lim_{s \to 1^-} |\Pi^{\ast, s}_p f| = |\Pi^\ast_p f|.
\end{align*}
Let $K$ be a star body containing the origin in its interior.
Notice that for any $q > n$, the map $\xi \mapsto \| \xi \|_K^{-n + q}$ is bounded in $\Sphere$.
Therefore, we have
\[
	\lim_{s \to 1^-} \widetilde{V}_q\!\left(K, \Pi^{\ast, s}_p f\right) 
	= \widetilde{V}_q\!\left(K, \Pi^\ast_p \, f\right), \text{ for every } q > n, \, p \in [1, + \infty).
\]
Furthermore, if we consider $K$ a bounded star body, the above limit holds for every $q \in (- \infty, n) \setminus \{ 0 \}$ and $p \in [1, + \infty)$.
\end{remark}

\begin{definition}[$s$--fractional $L^\infty$ polar projection bodies]\label{def.Linfty}
Let $s \in (0, 1)$.
\begin{itemize}
    \item[$\bullet$] \
    For any function $f \in W^{s, \infty}(\R^n)$, its \textit{$s$--fractional $L^\infty$ polar projection body} $\Pi^{\ast, s}_\infty  f$ is a star--shaped set defined via the gauge function
	\begin{equation}
		\| \xi \|_{\Pi^{\ast, s}_\infty \, f}^s = \sup_{(x, t) \in \R^n \times (0, + \infty)} \dfrac{1}{t^{s}}|f(x + t\xi) - f(x)|.
	\end{equation}

    \item[$\bullet$]
    For any function $f \in W^{1, \infty}(\R^n)$, its \textit{$L^\infty$ polar projection body} $\Pi^{\ast}_\infty \, f$ is a star--shaped set defined via the gauge function
	\begin{equation}
		\| \xi \|_{\Pi^{\ast}_\infty \, f} = \esssup_{x \in \R^n} |\langle \nabla f(x), \xi \rangle |.
	\end{equation}
\end{itemize}    
\end{definition}

\begin{proposition}\label{prop.exist.infi.polar}
    Let $f \in W^{1, 1}(\R^n) \cap W^{1, \infty}(\R^n)$ be nonzero.
    The following assertions hold true:
    \begin{itemize}
        \item[(i)] 
        For any fixed $s \in (0, 1)$, there exist $c = c(s, f) > 0$ and $\bar p = \bar p(s, f) > 1$ large enough such that $\Pi^{\ast, s}_p f \subset c B^n$ for every $p \geq \bar p$. 
        
        \item[(ii)] 
        For any fixed $s \in (0, 1)$, the set $\Pi^{\ast, s}_\infty \, f$ is origin-symmetric star body containing the origin in its interior.
        Furthermore, there exists $c = c(f) > 0$ and $\bar s = \bar s(f) \in (0, 1)$ such that $\Pi^{\ast, s}_\infty f \subset c B^n$ for every $s \in (\bar s, 1)$.

        \item[(iii)]
        The set $\Pi^{\ast}_\infty \, f$ is bounded origin-symmetric convex body with origin in its interior.
    \end{itemize}
\end{proposition}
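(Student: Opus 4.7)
The plan is to prove the three items in the order (iii), (ii), (i), since each part builds on the techniques developed previously. The guiding idea throughout is to convert non-degeneracy of $f$ (as a nonzero element of $L^1 \cap L^\infty$) into uniform positivity of the relevant gauge functions on $\Sphere$.

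\textbf{For (iii):} I observe that $\xi \mapsto \|\xi\|_{\Pi^{\ast}_\infty f}$ is a seminorm on $\R^n$---sublinearity follows from the triangle inequality inside the essential supremum, and symmetry from $|\langle \nabla f, -\xi \rangle| = |\langle \nabla f, \xi \rangle|$. The bound $\|\xi\|_{\Pi^{\ast}_\infty f} \leq \|\nabla f\|_{L^\infty} |\xi|$ places the origin in the interior of $\Pi^\ast_\infty f$. For strict positivity on $\Sphere$, I argue by contradiction: if $\|\xi\|_{\Pi^\ast_\infty f} = 0$ for some unit $\xi$, then $\langle \nabla f, \xi \rangle = 0$ a.e., so by absolute continuity along lines parallel to $\xi$ the function $f$ is constant on a.e. such line; combined with $f \in L^1(\R^n)$ and Fubini, the constant is $0$, contradicting $f \not\equiv 0$. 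Continuity of the seminorm on the compact $\Sphere$ then gives a uniform lower bound, hence boundedness of $\Pi^\ast_\infty f$.

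\textbf{For (ii):} Origin-symmetry is immediate from the change of variables $y = x - t\xi$ inside the supremum. To show continuity of the gauge, I exploit the inclusion $W^{1,\infty}(\R^n) \subset W^{s,\infty}(\R^n)$ (cf.~\eqref{inclu-02}): combining $|f(y) - f(z)| \leq \min(\|\nabla f\|_{L^\infty}|y-z|,\, 2\|f\|_{L^\infty})$ with a case split according to whether $|y-z| \lessgtr 1$ shows that $[f]_{C^{0,s}}$ is finite and bounded uniformly for $s \in (0,1)$. The estimate
\[
t^{-s}|f(x+t\xi) - f(x+t\eta)| \leq [f]_{C^{0,s}}|\xi-\eta|^s
\]
combined with the triangle inequality inside the defining supremum yields $\bigl| \|\xi\|_{\Pi^{\ast,s}_\infty f}^s - \|\eta\|_{\Pi^{\ast,s}_\infty f}^s \bigr| \leq [f]_{C^{0,s}}|\xi-\eta|^s$. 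Specializing $\eta = 0$ gives $\|\xi\|_{\Pi^{\ast,s}_\infty f} \leq [f]_{C^{0,s}}^{1/s}|\xi|$, so the origin is in the interior. Strict positivity follows as in (iii): $\|\xi\|^s = 0$ would mean $f$ is constant along every line parallel to $\xi$, contradicting $f \in L^1 \setminus \{0\}$. For the uniform inclusion $\Pi^{\ast,s}_\infty f \subset cB^n$ when $s \in (\bar s, 1)$, I restrict the defining supremum to $t = 1$:
\[
\|\xi\|_{\Pi^{\ast,s}_\infty f}^s \geq \sup_{x \in \R^n} |f(x+\xi) - f(x)|.
\]
The right side is continuous in $\xi$ (by Lipschitz regularity of $f$) and strictly positive on $\Sphere$, so $\geq \delta_f > 0$ uniformly by compactness; hence $\|\xi\|_{\Pi^{\ast,s}_\infty f} \geq \delta_f^{1/s}$, which is bounded below by a positive constant for $s$ sufficiently close to $1$.

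\textbf{For (i):} The strategy is to propagate the $L^\infty$ positivity from (ii) to the $L^p$ setting quantitatively. Fix $\xi \in \Sphere$ and choose $x_\xi \in \R^n$ with $|f(x_\xi + \xi) - f(x_\xi)| \geq \delta_f/2$. Lipschitz regularity of $f$ provides a radius $\tau = \tau(\delta_f, \|\nabla f\|_{L^\infty}) > 0$, independent of $\xi$, such that $|f(x+t\xi) - f(x)| \geq \delta_f/4$ on the cylinder $B_\tau(x_\xi) \times [1-\tau, 1+\tau]$. Restricting the integral in \eqref{frac.Lp} to this cylinder gives
\[
\|\xi\|_{\Pi^{\ast,s}_p f}^{ps} \geq p(1-s) \cdot 2\tau \cdot (1+\tau)^{-ps-1} \cdot |B_\tau| \cdot (\delta_f/4)^p.
\]
Taking $(ps)$-th roots, the right side converges to $(1+\tau)^{-1}(\delta_f/4)^{1/s} > 0$ as $p \to \infty$, uniformly in $\xi \in \Sphere$, since all constants are uniform. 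Hence for $p \geq \bar p(s,f)$ large enough, $\|\xi\|_{\Pi^{\ast,s}_p f}$ is uniformly bounded below by a positive constant on $\Sphere$, which delivers the required inclusion. The main obstacle is maintaining this uniformity in $\xi \in \Sphere$ throughout: the near-maximizer $x_\xi$ varies with direction while $\tau$ and $\delta_f/4$ must be chosen independently of $\xi$, and this is resolved via Lipschitz regularity of $f$ together with the compactness of $\Sphere$.
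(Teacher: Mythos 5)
Your proof is correct and reaches all three conclusions, but in parts (i) and (ii) it takes a genuinely different route from the paper. For the boundedness in (ii), the paper picks $r$ so that most of the $L^1$-mass of $f$ lies in $rB^n$ and then estimates $\|f(\cdot+t\xi)-f\|_{L^\infty}$ from below by an $L^1$-average over shifted balls for $t>2r$; you instead simply restrict the defining supremum to $t=1$ and observe that $\xi\mapsto\sup_x|f(x+\xi)-f(x)|$ is continuous (by Lipschitz continuity of $f$) and strictly positive on the compact sphere, yielding a uniform $\delta_f>0$. For (i), the paper exploits the Claim that $\Sset_\alpha$ is bounded, separates the supports of $f$ and $f(\cdot+t\xi)$ for $t>2r_o$, and bounds $\|f(\cdot+t\xi)-f\|_{L^p(\R^n)}$ from below; you instead localize at unit scale near a near-maximizer $x_\xi$ of $|f(\cdot+\xi)-f(\cdot)|$ and propagate the lower bound $\delta_f/4$ to a small cylinder using Lipschitz continuity, then integrate over that cylinder. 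Your route is somewhat shorter and more unified—a single scalar $\delta_f$ drives both (ii) and (i)—and your continuity argument via the uniform bound on $[f]_{C^{0,s}}$ is a bit more direct than the coordinate decomposition in~\eqref{origin-interior}. The paper's approach has the advantage that the large-$t$ support-separation argument sits closer to the proof of the asymmetric analogue (Lemma~\ref{lem.exist.pm}), where the sign structure of $(f(\cdot+t\xi)-f)_\pm$ makes a pointwise witness at one scale harder to control and the $L^1/L^p$-splitting argument is the one the paper then adapts; if you were asked to extend to the $\pm$ variants, you would likely need to fall back on something closer to the paper's argument.
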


\begin{proof}
    \textit{(i)} 
    For any fixed $\alpha \in (0, 1)$, let us denote    
    \[
        \Sset_\alpha := \left\{ x \in \R^n : |f(x)| \geq \alpha \| f \|_{L^\infty(\R^n)} \right\}.
    \]
    
    \begin{claim}\label{claim.W_alpha_bounded}
    		For any fixed $\alpha \in (0, 1)$, the set $\Sset_\alpha$ is bounded.
    \end{claim}
	    
    \medskip
    
    \textit{Proof of Claim \ref{claim.W_alpha_bounded}}
    
	\medskip
	
    Set $L := \mathrm{Lip}(f)$ the Lipschitz constant of $f$ on $\R^n$, which is finite due to $f \in W^{1, \infty}(\R^n)$ and the Morrey embedding.
    Fix $\bar x \in \Sset_\alpha$ and $\bar r = \frac{\alpha \| f \|_{L^\infty(\R^n)}}{2L}$.
   The Lipschitz continuity of $f$ leads to
   	\begin{align*}
   		|f(x)| \geq |f(\bar x)| - L |x - \bar x| \geq \alpha \| f \|_{L^\infty (\R^n)} - L \bar r = \frac{\alpha}{2} \| f \|_{L^\infty(\R^n)}, \text{ for every } x \in B_{\bar r}(\bar x).
   	\end{align*}
	Thus,  $B_{\bar r}(\bar x) \subset \Sset_{\alpha/2}$.
    To prove the boundedness of $\Sset_\alpha$, arguing by contradiction, we assume that there exists a sequence $\{ x_k \} \subset \Sset_\alpha$ satisfying $|x_k| \to \infty$ as $k \to \infty$.
    We can further assume that $|x_k - x_{k'}| > 2\bar r$ and $B_{\bar r}(x_k) \cap B_{\bar r}(x_{k'}) = \varnothing$ for every $k, k' \in \mathbb{N}$.
	Due to the above observation, we note that $B_{\bar r}(x_k) \subset \Sset_{\alpha/2}$ for every $k$.
	It follows that
	\begin{align*}
		\int_{\R^n} |f(x)| dx \geq \sum_{k \in \mathbb N} \int_{B_{\bar r}(x_k)} |f(x)| dx \geq \frac{\alpha}{2} \| f \|_{L^\infty(\R^n)} \sum_{k \in \mathbb N} \underbrace{|B_{\bar r}(x_k)|}_{ \, = \, \bar r^n |B^n|} = \infty,
	\end{align*}
	which contradicts the fact that $f \in L^1(\R^n)$. \hfill $\lozenge$	
	
	\medskip
	
	We continue the proof of \textit{(i)}.
    Thanks to Claim \ref{claim.W_alpha_bounded}, the set $\Sset_{1/4}$ is bounded and so there exists $r_o > 0$ such that $\Sset_{3/4} \subset \Sset_{1/4} \subset r_o B^n$.
	Note that $0 < |\Sset_{3/4}| < + \infty$ and for any $p \in [1, + \infty)$, we have 
	\[
	\| f \|_{L^p(\Sset_{3/4})} \geq \frac{3}{4} \| f \|_{L^\infty(\R^n)} |\Sset_{3/4}|^{1/p}.
	\]
	Fix $\xi \in \Sphere$.
    For any $t > 2r_o$, the sets $r_oB^n$ and $\Sset_{3/4} - t\xi$ are disjoint and hence we get 
    \[
    	\| f \|_{L^p(\Sset_{3/4} - t \xi)} \leq \dfrac{1}{4} \| f \|_{L^\infty(\R^n)} |\Sset_{3/4} - t\xi|^{1/p} = \dfrac{1}{4} \| f \|_{L^\infty(\R^n)} |\Sset_{3/4}|^{1/p}.
    \]
    Using the triangle inequality, for any $t > 2r_o$, we have
    \begin{equation}\label{differ-Lp}
    	\begin{split}
    	\| f(\cdot + t \xi) - f \|_{L^p(\R^n)} 
    	\geq &  ~ 
    	~ \underbrace{\| f(\cdot + t \xi) - f \|_{L^p(\Sset_{3/4} - t\xi)}}_{= \, \| f   - f(~\cdot ~- t \xi) \|_{L^p(\Sset_{3/4})}} \\
    	\geq & ~ 
    	~ \| f \|_{L^p(\Sset_{3/4})} - \| f (\cdot - t \xi) \|_{L^p(\Sset_{3/4})} \\
    	\geq & ~
	\frac{3}{4} \| f \|_{L^\infty(\R^n)} |\Sset_{3/4}|^{1/p} - \dfrac{1}{4} \| f \|_{L^\infty(\R^n)} |\Sset_{3/4}|^{1/p} \\
    = & ~  \dfrac{1}{2}\| f \|_{L^\infty(\R^n)} |\Sset_{3/4}|^{1/p} > 0.
    	\end{split}
    \end{equation}
    Consequently, we obtain
    \begin{align*}
    	 \| \xi \|^{ps}_{\Pi^{\ast, s}_p \, f} 
         = & ~  p(1 - s) \int_0^\infty t^{-ps - 1} \| f(\cdot + t\xi) - f \|^p_{L^p(\R^n)} \, dt \\
    	 \geq & 
    	 ~ p(1 - s) \left( \dfrac{\| f \|_{L^\infty(\R^n)} |\Sset_{3/4}|^{1/p}}{2}  \right)^p \int_{2r_o}^\infty t^{-ps - 1}dt \\
    	 = & 
    	 ~ \dfrac{(1 - s)(2r_o)^{-ps}}{s}  \left( \dfrac{\| f \|_{L^\infty(\R^n)} |\Sset_{3/4}|^{1/p}}{2}  \right)^p,
    \end{align*}
   	and thus
   	\begin{align*}
   		\| \xi \|^{s}_{\Pi^{\ast, s}_p \, f} \geq \dfrac{1}{2} (2r_o)^{-s}\| f \|_{L^\infty(\R^n)} |\Sset_{3/4}|^{1/p} \left( \dfrac{1 - s}{s} \right)^{1/p}.
   	\end{align*}
 Since $\textstyle\lim_{p \to \infty} |\Sset_{3/4}|^{1/p}(s^{-1} - 1)^{1/p} = 1$, there exist $\bar p  = \bar p(s, f) > 1$ and $c = c(s, f) > 0$ such that $\| \xi \|_{\Pi^{\ast, s}_p \, f} \geq c$ for every $p \geq \bar p$. 
 Therefore, the conclusion follows.
   
    \medskip
    
    \textit{(ii)} 
    Notice that 
    \[
        \| - \xi \|_{\Pi^{\ast, s}_\infty \, f} = \sup_{(x, t)} \dfrac{1}{t^s} |f(x - t\xi) - f(x)| = \sup_{(\tilde x, t)} \dfrac{1}{t^s} |f(\tilde x) - f(\tilde x + t\xi)| = \| \xi \|_{\Pi^{\ast, s}_\infty \, f},
    \]
    and hence $\Pi^{\ast, s}_\infty \, f$ is origin-symmetric.
    
    \medskip

    Let us check that the origin lies in the interior of $\Pi^{\ast, s}_\infty \, f$.
    One can directly see that $\xi \mapsto \| \xi \|_{\Pi^{\ast, s}_\infty \, f}^s$ satisfies the triangle inequality.
    Indeed, for each $\xi, \eta \in \R^n$, we have
    \begin{align*}
        \| \xi + \eta \|_{\Pi^{\ast, s}_\infty \, f}^s 
        = & ~ 
        \sup_{t > 0}
        \dfrac{1}{t^s} \| f(\cdot + t\xi + t\eta) - f \|_{L^\infty(\R^n)} \\
        \leq & ~
        \sup_{t > 0} 
        \dfrac{1}{t^s} \Big(
        \underbrace{\| f(\cdot + t\xi + t\eta) - f(\cdot + t \eta) \|_{L^\infty(\R^n)}}_{ = \, \| f(~\cdot~ + t\xi) - f \|_{L^\infty(\R^n)}} 
        + 
        \| f(\cdot + t\eta) - f \|_{L^\infty(\R^n)} 
        \Big) 
        \\
        \leq & ~ 
        \sup_{t > 0} \frac{1}{t^s} \| f(\cdot + t\xi) - f \|_{L^\infty(\R^n)} + \sup_{t > 0} \dfrac{1}{t^s}\| f(\cdot + t\eta) - f \|_{L^\infty(\R^n)} \\
        = & ~
        \| \xi \|^s_{\Pi^{\ast, s}_\infty \, f} 
        + \| \eta \|^s_{\Pi^{\ast, s}_\infty \, f} \,.
    \end{align*}
 	Notice that $\| t \xi \|_{\Pi^{\ast, s}_\infty \, f} = |t| \| \mathrm{sign}(t)\xi \|_{\Pi^{\ast, s}_\infty \, f}$ for every $t \in \R$ and $\xi \in \R^n$.
 	For any $x \in \R^n$, we write $x = \textstyle\sum_j x_j e_j$, where $\{ e_j \}_{j = 1}^d$ is the canonical basis of $\R^n$.
 	By the triangle inequality above, we get
 	\begin{equation}\label{origin-interior}
 		\| x \|_{\Pi^{\ast, s}_\infty f} \leq \left( \sum_j \| x_j e_j \|_{\Pi^{\ast, s}_\infty f}^s \right)^{1/s} = \left( \sum_j |x_j| \| \mathrm{sign}(x_j) e_j \|^s_{\Pi^{\ast, s}_\infty f}  \right)^{1/s} \leq a |x|,
 	\end{equation}
where $a > 0$ is independent of $x$. 
	This implies that $\Pi^{\ast, s}_\infty \, f$ contains the origin in its interior.

    \medskip
    
	To conclude that $\Pi^{\ast, s}_\infty \, f$ is a star body, it remains to prove that the gauge function $\| \cdot \|_{\Pi^{\ast, s}_\infty \, f}$ is continuous.
	Let $\{ \xi_k \}$ be a sequence converging to some $\xi \in \R^n$. 
 	Since the map $\xi \mapsto \| \xi \|_{\Pi^{\ast, s}_\infty \, f}^s$ satisfies the triangle inequality, we have
    \begin{align*}
        \Big\vert \| \xi_k \|_{\Pi^{\ast, s}_\infty \, f}^s - \| \xi \|_{\Pi^{\ast, s}_\infty \, f}^s \Big\vert
        \leq 
        \| \xi_k - \xi \|_{\Pi^{\ast, s}_\infty \, f}^s \underbrace{\, \leq \,}_{\eqref{origin-interior}} a^s | \xi_k - \xi|^s \to 0 \text{ as } k \to \infty.
    \end{align*}

    \medskip

    It remains to prove the boundedness of $\Pi^{\ast, s}_\infty \, f$ as $s$ is sufficiently close to $1$, that is, there exists $c = c(f) > 0$ such that $\| \xi \|_{\Pi^{\ast, s}_\infty \, f} \geq c$ for every $s$ closed to $1$.
    Let $r > 1$ be such that $\| f \|_{L^1(r B^n)} \geq \textstyle\frac{2}{3} \| f \|_{L^1(\R^n)}$.
    Fix $\xi \in \Sphere$.
    For any $t > 2r$, the two sets $rB^n$ and $rB^n - t\xi$ are disjoint and we also have $\| f \|_{L^1(rB^n - t\xi)} \leq \textstyle\frac{1}{3} \| f \|_{L^1(\R^n)}$.
    With these remarks, using the triangle inequality, we obtain the following estimate for any $t > 2r$,
    \begin{align*}
    	\| f(\cdot + t\xi) -  f\|_{L^\infty(\R^n)}
    	\geq & 
    	~ \| f(\cdot + t\xi) -  f\|_{L^\infty(rB^n - t\xi)} \\
    	\geq & 
    	~ \dfrac{1}{|rB^n - t\xi|} \underbrace{\| f(\cdot + t\xi)  - f \|_{L^1(rB^n - t\xi)} }_{\, = \, \| f - f(~\cdot~ - t \xi)\|_{L^1(rB^n)}} \\
    	\geq & 
    	~ \dfrac{1}{r^n |B^n|} \Big( \| f \|_{L^1(rB^n)} - \underbrace{\| f(\cdot - t\xi) \|_{L^1(rB^n)}}_{ \, = \, \| f \|_{L^1(rB^n - t\xi)} } \Big) 
     \\
    	\geq & 
    	~ \dfrac{1}{r^n |B^n|} \left( \dfrac{2}{3} \| f \|_{L^1(\R^n)} - \dfrac{1}{3} \| f \|_{L^1(\R^n)} \right) =  \dfrac{1}{3 r^n |B^n|} \| f \|_{L^1(\R^n)}.
    \end{align*}
    It follows from the above estimate and the definition of $\| \cdot \|_{\Pi^{\ast, s}_\infty \, f}$  that
	\begin{align*}
		\| \xi \|_{\Pi^{\ast, s}_\infty \, f} = \left( \sup_{t > 0} \dfrac{1}{t^s} \| f(\cdot + t\xi) - f \|_{L^\infty(\R^n)} \right)^{1/s} \geq \dfrac{\upsilon^{1/s}}{2r} 
		\quad \text{ with } \quad
		\upsilon = \dfrac{\| f \|_{L^1(\R^n)}}{3r^n |B^n|}.
	\end{align*}
	Note that $\textstyle\lim_{s \to 1^-} \upsilon^{1/s} = \upsilon$ and hence there exists $\bar s = \bar s(f) \in (0, 1)$ such that $\upsilon^{1/s} \geq \upsilon/2$ for all $s \in (\bar s, 1)$.
	This concludes the boundedness of $\Pi^{\ast, s}_\infty \, f$.
	
 	\medskip
 	
 	\textit{(iii)}
 	 The proof is direct.
 	 Indeed, we have, for every $\xi, \eta \in \R^n$, that
 	 \begin{align*}
 	 	\| \xi + \eta \|_{\Pi^\ast_\infty \, f} = & ~
        \esssup_{x \in \R^n} | \langle \nabla f(x), \xi + \eta \rangle| \\
        \leq & ~ \esssup_{x \in \R^n} | \langle \nabla f(x), \xi \rangle | + \esssup_{x \in \R^n} |\langle \nabla f(x), \eta \rangle| = \| \xi \|_{\Pi^\ast_\infty \, f} + \| \eta \|_{\Pi^\ast_\infty \, f}
 	 \end{align*}
 	 and
 	 \begin{align*}
 	 	\| \xi \|_{\Pi^\ast_\infty \, f } \leq \| \nabla f \|_{L^\infty(\R^n)} |\xi|.
 	 \end{align*}
 	 As a consequence of the above observations, we infer that $\xi \mapsto \| \xi \|_{\Pi^\ast_\infty \, f}$ is continuous.

        \medskip

        It remains to prove that $\Pi^\ast_\infty \, f$ is bounded, that is, $\textstyle\inf_{\xi \in \Sphere} \| \xi \|_{\Pi^\ast_\infty \, f} > 0$.
        Arguing by contradiction, we assume that $\textstyle\inf_{\xi \in \Sphere} \| \xi \|_{\Pi^\ast_\infty \, f} = 0$.
        Consequently, there exists $\xi_o \in \Sphere$ such that 
            \begin{align*}
                \esssup_{x \in \R^n} | \langle \nabla f(x), \xi_o \rangle | = 0.
            \end{align*}
Consequently, for a.e $x \in \R^n$, $f$ is constant along the line $x + t\xi_o$.
        Since $f \in L^1(\R^n)$, we infer that $f \equiv 0$, which makes a contradiction.
 	The proof of Proposition \ref{prop.exist.infi.polar} is complete.
\end{proof}

\begin{proposition}\label{prop.lim.gauge}
    Let $f \in W^{1, 1}(\R^n) \cap W^{1, \infty}(\R^n)$ be nonzero.
    Then, the following limits hold:
    \begin{itemize}
        \item[(i)] for any fixed $s \in (0, 1)$, one has $\| \xi \|_{\Pi^{\ast, s}_p f} \xlongrightarrow{\, p \to + \infty \,} \| \xi \|_{\Pi^{\ast, s}_\infty \, f}$ a.e. $\xi \in \Sphere$;

        \item[(ii)] $\| \xi \|_{\Pi^\ast_p \, f} \xlongrightarrow{\, p \to  \infty \,} \| \xi \|_{\Pi^\ast_\infty \, f}$ for every $\xi \in \Sphere$;

        \item[(iii)] $\| \xi \|_{\Pi^{\ast, s}_\infty \, f} \xlongrightarrow{\, s \to 1^- \,} \| \xi \|_{\Pi^\ast_\infty \, f}$ for every $\xi \in \Sphere$.
    \end{itemize}
\end{proposition}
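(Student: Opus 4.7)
All three assertions stem from one common real-analytic principle: if $g$ is measurable on a $\sigma$-finite measure space with $\|g\|_{L^\infty}<\infty$ and $g\in L^{q_0}$ for some $q_0<\infty$, then $\|g\|_{L^p}\to\|g\|_{L^\infty}$ as $p\to\infty$. I shall apply it on three different measure spaces, one per statement. Throughout I replace $f$ by its Lipschitz continuous representative (granted by Morrey's embedding), so that every supremum below coincides with its essential counterpart.

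For \textit{(i)}, introduce
\[
g_\xi(x,t):=t^{-s}\,|f(x+t\xi)-f(x)|\qquad \text{on}\qquad \R^n\times(0,\infty)
\]
equipped with the product measure $dx\,dt/t$. A direct reparametrization of the definition of $\|\cdot\|_{\Pi^{\ast,s}_p f}$ yields the identities
\[
\|\xi\|^{sp}_{\Pi^{\ast,s}_p f}=p(1-s)\,\|g_\xi\|_{L^p(dx\,dt/t)}^p,\qquad \|g_\xi\|_{L^\infty(dx\,dt/t)}=\|\xi\|^{s}_{\Pi^{\ast,s}_\infty f}.
\]
To supply the integrability hypothesis required by the $L^p\to L^\infty$ principle, I pass to polar coordinates in the standard Gagliardo seminorm:
\[
\int_{\Sphere}\|g_\eta\|_{L^1(dx\,dt/t)}\,d\eta \;=\; [f]_{s,1}\;<\;\infty,
\]
the finiteness following from $f\in W^{s,1}(\R^n)$ via the inclusions \eqref{inclu-01}. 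Hence $g_\xi\in L^1(dx\,dt/t)$ for almost every $\xi\in\Sphere$, and combining $\|g_\xi\|_{L^p}\to\|g_\xi\|_{L^\infty}$ with $\bigl(p(1-s)\bigr)^{1/(sp)}\to 1$ produces (i).

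Assertion \textit{(ii)} is immediate: $\langle\nabla f(\cdot),\xi\rangle\in L^1(\R^n)\cap L^\infty(\R^n)$ by hypothesis, so the same principle applied to Lebesgue measure gives $\|\xi\|_{\Pi^\ast_p f}\to\|\xi\|_{\Pi^\ast_\infty f}$ for every $\xi$. For \textit{(iii)} I split into two bounds. For the upper estimate, I interpolate between
\[
|f(x+t\xi)-f(x)|\le t\,\|\xi\|_{\Pi^\ast_\infty f}\qquad\text{and}\qquad|f(x+t\xi)-f(x)|\le 2\|f\|_{L^\infty(\R^n)},
\]
the first from the fundamental theorem of calculus along the line $r\mapsto f(x+r\xi)$. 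Multiplying by $t^{-s}$, taking the minimum and maximizing in $t$ yields $\|\xi\|^{s}_{\Pi^{\ast,s}_\infty f}\le\|\xi\|^{s}_{\Pi^\ast_\infty f}\,(2\|f\|_{L^\infty(\R^n)})^{1-s}$, whence $\limsup_{s\to 1^-}\|\xi\|_{\Pi^{\ast,s}_\infty f}\le\|\xi\|_{\Pi^\ast_\infty f}$. For the lower bound, at any Lebesgue differentiability point $x_0$ of $f$ one has $\lim_{t\to 0^+}t^{-1}|f(x_0+t\xi)-f(x_0)|=|\langle\nabla f(x_0),\xi\rangle|$, so exchanging sup with liminf,
\[
\liminf_{s\to 1^-}\|\xi\|^{s}_{\Pi^{\ast,s}_\infty f}\;\ge\;\sup_{(x,t)}t^{-1}|f(x+t\xi)-f(x)|\;\ge\;\|\xi\|_{\Pi^\ast_\infty f}.
\]
The uniform positivity and boundedness of $\|\xi\|_{\Pi^{\ast,s}_\infty f}$ from Proposition~\ref{prop.exist.infi.polar}(ii)--(iii) then transfer the convergence from $\|\xi\|^{s}_{\Pi^{\ast,s}_\infty f}$ to $\|\xi\|_{\Pi^{\ast,s}_\infty f}$ via continuity of $a\mapsto a^{1/s}$ at $s=1$.

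The main technical point is the Fubini verification that $g_\xi\in L^{q_0}(dx\,dt/t)$ for a.e.\ $\xi$ in (i); this is precisely what forces the ``almost everywhere'' qualifier there, in contrast to (ii) and (iii), which hold for every $\xi\in\Sphere$.
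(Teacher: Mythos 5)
Your proposal is correct. Parts \textit{(i)} and \textit{(ii)} are, in substance, the paper's own argument: the paper also uses the polar decomposition of the Gagliardo seminorm to obtain the $L^1$-integrability in $(x,t)$ for a.e.\ $\xi$, and its limsup/liminf estimates via $\|\xi\|^{s(p-1)}_{\Pi^{\ast,s}_\infty f}$ and the set $A_{s,\varepsilon}$ are precisely the two halves of the abstract ``$\|g\|_{L^p}\to\|g\|_{L^\infty}$'' lemma you invoke; you have merely packaged the computation more cleanly. Where you genuinely diverge from the paper is the limsup bound in \textit{(iii)}. The paper obtains it indirectly: it returns to the finite-$p$ quantity $\|\xi\|_{\Pi^{\ast,s}_p f}$, derives the two-sided bounds \eqref{esti.iii-01}--\eqref{esti.iii-02} by splitting the integral at a scale $\varrho$ and applying H\"older, Fubini and interpolation, then sends $p\to\infty$, $\varrho\searrow 0$, $\varepsilon\searrow 0$ and $s\to 1^-$ in that order, arriving eventually at $\|\xi\|_{\Pi^{\ast,s}_\infty f}\le \|\xi\|_{\Pi^\ast_\infty f}\,\|2f\|_{L^\infty}^{(1-s)/s}$. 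Your argument obtains the same inequality in one stroke by interpolating pointwise, $t^{-s}|f(x+t\xi)-f(x)|\le\bigl(t^{-1}|f(x+t\xi)-f(x)|\bigr)^s\,(2\|f\|_{L^\infty})^{1-s}$, and taking suprema; this is substantially shorter and needs only $f\in W^{1,\infty}\cap L^\infty$ rather than the full $W^{1,1}\cap W^{1,\infty}$ machinery. The reason the paper's version is not wasteful is that the auxiliary estimates \eqref{esti.iii-01}--\eqref{esti.iii-02} are reused in the proof of Theorem~\ref{thm.mixed-dila}\textit{(iii)} (where the pointwise trick does not apply directly, since the dilation factor involves $\|x-y\|_K$); so the paper pays up front for later reuse, while your route is the more economical one when the goal is Proposition~\ref{prop.lim.gauge}\textit{(iii)} alone. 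One small imprecision: the uniform-in-$s$ upper bound on $\|\xi\|_{\Pi^{\ast,s}_\infty f}$ needed in your final passage from $\|\xi\|^s_{\Pi^{\ast,s}_\infty f}$ to $\|\xi\|_{\Pi^{\ast,s}_\infty f}$ does not come from Proposition~\ref{prop.exist.infi.polar}\textit{(ii)}--\textit{(iii)} (which controls the gauge from \emph{below}); it follows either from Remark~\ref{rem.bounded-gauge}\textit{(ii)} or, more to the point, from your own interpolation inequality, so the gap is cosmetic rather than mathematical.
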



\begin{proof}
\textit{(i)} 
Let us fix $s \in (0, 1)$.
From the inclusion~\eqref{inclu-01}, observe that $f \in W^{1, s}(\R^n)$.
Recall that the Gagliardo seminorm of $f$ can be written after a polar change of variable as follows: 
\begin{align*}
	\int_{\R^n} \int_{\R^n} \dfrac{|f(x) - f(y)|}{|x - y|^{n + s}} \, dx \, dy = \int_{\Sphere} \int_0^\infty \int_{\R^n} \dfrac{|f(x + t\xi) - f(x)|}{t^{s + 1}} \, dx \, dt \, d\xi.
\end{align*}
Therefore, for a.e $\xi \in \Sphere$, one has
	\begin{equation}\label{int-s+1-fini}
		\int_0^\infty \int_{\R^n} \dfrac{|f(x + t \xi) - f(x)|}{t^{s + 1}} \, dx \, dt < + \infty.
	\end{equation}

\smallskip

On the one hand, for any fixed $p > 1$, a direct computation yields
\begin{align*}
    & ~ \int_0^\infty t^{-sp - 1} \int_{\R^n} |f(x + t \xi) - f(x)|^p \,dx\,dt \\
    = & ~ \int_0^\infty \int_{\R^n} \dfrac{|f(x + t\xi) - f(x)|^{p - 1}}{t^{s(p - 1)}} \dfrac{|f(x + t\xi) - f(x)|}{t^{s + 1}} \, dx\,dt  \\
    \leq & ~ \| \xi \|^{s(p - 1)}_{\Pi^{\ast, s}_\infty f} \int_0^\infty \int_{\R^n}  \dfrac{|f(x + t\xi) - f(x)|}{t^{s + 1}} \, dx \, dt.
\end{align*}
This implies that 
    \begin{align*}
        \| \xi \|^s_{\Pi^{\ast, s}_p \, f} \leq  \| \xi \|^{s \left(1 - 1/p\right)}_{\Pi^{\ast, s}_\infty \, f}
        (p(1 - s))^{1/p}
        \left( \int_0^\infty \int_{\R^n}  \dfrac{|f(x + t\xi) - f(x)|}{t^{s + 1}} \, dx \, dt\right)^{1/p}.
    \end{align*}
Note that $\textstyle\lim_{p \to \infty} (p(1 - s))^{1/p} = 1$.
Thanks to \eqref{int-s+1-fini}, we immediately get
    \begin{equation}\label{limsup-01}
        \limsup_{p \to \infty} \| \xi \|_{\Pi^{\ast, s}_p \, f} \leq \| \xi \|_{\Pi^{\ast, s}_\infty \, f} \,.
    \end{equation}

\medskip

On the other hand, for a fixed $\varepsilon > 0$ small enough, set
    \begin{equation}\label{A_s_epsi}
        A_{s, \varepsilon} := \left \{   (x, t) \in \R^n \times (0, + \infty): \dfrac{|f(x + t\xi) - f(x)|}{t^s} > \|\xi \|^s_{\Pi^{\ast, s}_\infty f} - \varepsilon \right\}.
    \end{equation}
Notice that $|A_{s, \varepsilon}| > 0$.
Then, we compute, for any fixed $p > 1$,
    \begin{equation}\label{est.A_s_epsi}
    	\begin{split}
         \frac{1}{p(1- s)}\|\xi\|^{ps}_{\Pi^{\ast, s}_p \, f} = & ~ \int_0^\infty t^{-sp - 1} \int_{\R^n} |f(x + t \xi) - f(x)|^p \, dx \, dt \\
         \geq &  ~  \iint_{A_{s, \varepsilon}} \dfrac{|f(x + t\xi) - f(x)|^p}{t^{sp + 1}} \, dx \, dt \\
         \geq & ~ \left( \| \xi \|^s_{\Pi^{\ast, s}_\infty \, f} - \varepsilon \right)^{p - 1} \iint_{A_{s,\varepsilon}} \dfrac{t^{s(p - 1)}|f(x +t\xi) - f(x)|}{t^{sp + 1}} \, dx \, dt  \\
         = & ~ \left( \| \xi \|^s_{\Pi^{\ast, s}_\infty \, f} - \varepsilon \right)^{p - 1}\iint_{A_{s, \varepsilon}} \dfrac{|f(x +t\xi) - f(x)|}{t^{s + 1}} \, dx \, dt.
         \end{split}
    \end{equation}
Therefore, thanks to the fact that $\textstyle\lim_{p \to \infty} (p(1-s))^{1/p} = 1$, we get
    \[
        \liminf_{p \to \infty} \| \xi \|^s_{\Pi^{\ast, s}_p f} \geq \| \xi \|^s_{\Pi^{\ast, s}_\infty f} - \varepsilon.
    \]
Since $\varepsilon > 0$ is arbitrary, we obtain
    \begin{equation}\label{liminf-01}
        \liminf_{p \to \infty} \| \xi \|_{\Pi^{\ast, s}_p f} \geq \| \xi \|_{\Pi^{\ast, s}_\infty f}.
    \end{equation}
Combining \eqref{limsup-01} and \eqref{liminf-01}, we conclude that $\textstyle\lim_{p \to \infty}  \| \xi \|_{\Pi^{\ast, s}_p f} = \| \xi \|_{\Pi^{\ast, s}_\infty  f}$.

\medskip

\textit{(ii)}
Notice that $\| \xi \|_{\Pi^\ast_p \, f} = \| \langle \nabla f (\cdot), \xi \rangle \|_{L^p(\R^n)}$ and $\| \xi \|_{\Pi^\ast_\infty \, f} = \| \langle \nabla f(\cdot), \xi \rangle \|_{L^\infty(\R^n)}$.
Consequently, using the fact that $f \in W^{1, 1}(\R^n) \cap W^{1, \infty}(\R^n)$, it is fundamental to check that $\textstyle\lim_{p \to \infty} \| \xi \|_{\Pi^\ast_p \, f} = \| \xi \|_{\Pi^\ast_\infty \, f}$. 

\medskip

\textit{(iii)} 
Fix $\xi \in \Sphere$ and $\delta > 0$ small enough such that
\[
    \delta < \dfrac{\| \xi \|_{\Pi^\ast_\infty \, f} }{1 +  \| \xi \|_{\Pi^\ast_\infty\, f} }.
\]
By definitions, there exist $x_o \in \R^n$ and $t_o > 0$
such that 
    \begin{equation}\label{NUM-01}
    \begin{split}
        (1 - \delta) \| \xi \|_{\Pi^\ast_\infty \, f} 
        \leq & ~
        | \langle \nabla f(x_o), \xi \rangle | \\
        = & ~ \lim_{t \to 0^+} \dfrac{1}{t} |f(x_o + t\xi) - f(x_o)| \leq \dfrac{1}{t_o} |f(x_o + t_o \xi) - f(x_o)| + \delta.
    \end{split}
    \end{equation}
Furthermore, we have
    \begin{equation}\label{NUM-02}
        \dfrac{1}{t_o} |f(x_o + t_o \xi) - f(x_o)| 
        = 
        t_o^{s - 1} \dfrac{|f(x_o + t_o \xi) - f(x_o)|}{t_o^s}
        \leq t_o^{s - 1} \| \xi \|^s_{\Pi^{\ast, s}_\infty \, f} \quad \text{ for every } s \in (0, 1).
    \end{equation}
It follows from the estimates \eqref{NUM-01} and  \eqref{NUM-02} that
    \begin{align*}
        \left( 
        (1 - \delta) \| \xi \|_{\Pi^\ast_\infty \, f} - \delta
        \right)^{1/s} \leq t_o^{1 - 1/s} \| \xi \|_{\Pi^{\ast, s}_\infty \, f} \quad \text{ for every } s \in (0, 1).
    \end{align*}
Letting first $s \to 1^-$ and then $\delta \searrow 0$, we deduce
    \begin{equation}\label{liminf-sup-t^s}
        \| \xi \|_{\Pi^\ast_\infty \, f} \leq \liminf_{s \to 1^-}  \| \xi \|_{\Pi^{\ast, s}_\infty \, f} \, .
    \end{equation}
    
\medskip

It remains to check the limsup inequality: $\textstyle\limsup_{s \to 1^-} \| \xi \|_{\Pi^{\ast, s}_\infty \, f} \leq \| \xi \|_{\Pi^\ast_\infty \, f}$.
Fix $s \in  (0, 1)$ and $\varepsilon = \varepsilon(s, \xi) > 0$ such that $\varepsilon < \| \xi \|_{\Pi^{\ast, s}_\infty f}^s$.
Let $A_{s, \varepsilon}$ be defined as in \eqref{A_s_epsi}.
For any $p > 1$, we recall from \eqref{est.A_s_epsi} that
	\begin{equation}\label{esti.iii-01}
        \begin{split}
		\dfrac{1}{p(1 - s)}\| \xi \|^{sp}_{\Pi^{\ast, s}_p \, f} = & ~ \int_0^\infty t^{-sp - 1} \int_{\R^n} |f(x + t \xi) - f(x)|^p \, dx \, dt \\
        \geq & ~  \left( \| \xi \|^s_{\Pi^{\ast, s}_\infty \, f} - \varepsilon \right)^{p - 1}\iint_{A_{s, \varepsilon}} \dfrac{|f(x +t\xi) - f(x)|}{t^{s + 1}} \, dx \, dt
        \end{split}
	\end{equation}
Due to the Lipschitz continuity of $f$ and the fact that Sobolev maps are absolutely continuous along almost every line, it holds $|f(x + t \xi) - f(x)| \leq t \| \xi \|_{\Pi^\ast_\infty \, f}$ for everywhere $(x, t) \in \R^n \times (0, + \infty)$.
With a direct computation, we get
	\begin{equation}\label{Mdiff.01}
        \begin{split}
		& ~ \dfrac{1}{p(1 - s)}\| \xi \|^{sp}_{\Pi^{\ast, s}_p \, f} \\
        = &~ \int_0^\infty t^{-sp - 1} \int_{\R^n} |f(x + t \xi) - f(x)|^{s(p - 1)} |f(x + t \xi) - f(x)|^{p(1 - s) + s} \, dx \, dt \\
		\leq & ~ \| \xi \|_{\Pi^\ast_\infty \, f}^{s(p - 1)} \int_0^\infty \int_{\R^n} \dfrac{|f(x + t \xi) - f(x)|^{p(1 - s) + s}}{t^{s + 1}} \, dx \, dt.
        \end{split}
	\end{equation}
We continue by splitting the above integral into two parts.
Fix $\varrho > 0$ and set $p_s := p(1 - s) + s$.
Using H\"older's inequality with the exponents $p_s$ and $p_s/(p_s - 1)$ and Fubini's theorem, we obtain
	\begin{equation}\label{Mdiff.02}
        \begin{alignedat}{3}
		~ \int_0^\varrho \int_{\R^n} \dfrac{|f(x + t \xi) - f(x)|^{p_s}}{t^{s + 1}} \, dx \, dt  \leq &  ~ 
        \int_0^\varrho \int_{\R^n} t^{-(s + 1)}  \Big( \int_0^t | \langle \nabla f(x + \tau \xi), \xi \rangle | \, d\tau \Big)^{p_s} \, dx \, dt &
        \\
        \leq & ~ 
        \int_0^{\varrho} t^{p_s - s - 2} \int_{\R^n} \int_0^t |\langle \nabla f(x + \tau \xi), \xi \rangle |^{p_s} \, d\tau \, dx \, dt & 
        \\
        \leq & ~ \int_0^{\varrho} t^{p_s - s - 2} \int_0^t \underbrace{\int_{\R^n} |\langle \nabla f(x + \tau \xi), \xi \rangle |^{p_s} \, dx}_{\, = \, \| \langle \nabla f (\cdot), \xi \rangle \|_{L^{p_s}(\R^n)}^{p_s}} \, d\tau \, dt &
        \\
        \leq & ~ \| \nabla f \|_{L^{p_s}(\R^n)}^{p_s} \int_0^\varrho t^{p_s - s - 1} dt & 
        \\
		\leq & ~ \dfrac{\varrho^{p(1 - s)}}{p(1 - s)} \| \nabla f \|^{(p - 1)(1 - s)}_{L^\infty(\R^n)} \| \nabla f \|_{L^1(\R^n)}, &
        \end{alignedat}
	\end{equation}
where we have used the fact that $p_s - s - 1 = p(1 - s) - 1$ and a simple interpolation inequality
\[
\| \nabla f \|_{L^{p_s}(\R^n)}^{p_s} \leq \| \nabla f \|^{(p - 1)(1 - s)}_{L^\infty(\R^n)} \| \nabla f \|_{L^1(\R^n)}.
\]
Analogously, using the fact that $f \in L^1(\R^n) \cap L^\infty(\R^n)$, we get an estimate in the domain $(\varrho, \infty) \times \R^n$ as follows
	\begin{equation}\label{Mdiff.03}
    \begin{split}
		\int_\varrho^\infty \int_{\R^n} \dfrac{|f(x + t \xi) - f(x)|^{p_s}}{t^{s + 1}} \,dx \,dt 
		\leq & ~  \| 2f \|^{p_s}_{L^{p_s}(\R^n)} \int_\varrho^\infty \dfrac{1}{t^{s + 1}} dt \\
		\leq & ~ \dfrac{1}{s\varrho^s} \| 2f \|^{(p - 1)(1 - s)}_{L^\infty(\R^n)} \| 2f \|_{L^1(\R^n)}.
    \end{split}
	\end{equation}
Combining~\eqref{Mdiff.01}, \eqref{Mdiff.02} and \eqref{Mdiff.03}, we obtain the following upper bound
	\begin{equation}\label{esti.iii-02}
    \begin{split}
		& ~ \dfrac{1}{p(1 - s)} \| \xi \|^{sp}_{\Pi^{\ast, s}_p \, f} 
        \\
		\leq & ~ \| 
		\xi \|_{\Pi^\ast_\infty \, f}^{s(p - 1)} 
		\left(
		\dfrac{\varrho^{p(1 - s)}}{p(1 - s)} \| \nabla f \|^{(p - 1)(1 - s)}_{L^\infty(\R^n)} \| \nabla f \|_{L^1(\R^n)} 
		+
		\dfrac{1}{s \varrho^s} \| 2f \|^{(p - 1)(1 - s)}_{L^\infty(\R^n)} \| 2f \|_{L^1(\R^n)} 
		\right).
    \end{split}
	\end{equation}
Observe that for any fixed $p > 2$, one has $(r + s)^{\frac{1}{p - 1}} \leq r^{\frac{1}{p - 1}} + s^{\frac{1}{p - 1}}$ for every $r, s \geq 0$.
Therefore, combining the estimates \eqref{esti.iii-01} and \eqref{esti.iii-02} and then taking the $(p - 1)$th root, we get
	\begin{equation}
	\begin{split}
		& \| \xi \|_{\Pi^\ast_\infty \, f}^s 
  		\left(
     	\frac{\varrho^{(1 - s)\frac{p}{p - 1}}}{(p(1 - s))^{\frac{1}{p - 1}}}
     	\| \nabla f \|^{1 - s}_{L^\infty(\mathbb{R}^d)}
     	\| \nabla f \|_{L^1(\mathbb{R}^n)}^{\frac{1}{p - 1}}
     	+ 
     	\frac{1}{(s\varrho^s)^{\frac{1}{p - 1}}}
     	\| 2f \|^{1 - s}_{L^\infty(\mathbb{R}^n)}
     	\| 2f \|_{L^1(\mathbb{R}^d)}^{\frac{1}{p - 1}}
  		\right)
		\\
		\geq & ~
		\left( \| \xi \|^s_{\Pi^{\ast, s}_\infty \, f} - \varepsilon 			\right)
  		\left( \iint_{A_{s, \varepsilon}} 
  		\frac{|f(x + t\xi) - f(x)|}{t^{s + 1}} \, dx \, dt \right)^{\frac{1}{p - 1}}.
	\end{split}
	\end{equation}
Since $\textstyle\lim_{p \to \infty} (p(1 - s))^{\frac{1}{p - 1}} = 1$, letting first $p \to \infty$ and then taking the $s$th root in the above inequality, we arrive at
	\begin{align*}
		\| \xi \|_{\Pi^\ast_\infty \, f} \left( 
		\varrho^{1 - s} \| \nabla f \|^{1 - s}_{L^\infty(\mathbb{R}^n)} 
		+
		\| 2f \|^{1 - s}_{L^\infty(\mathbb{R}^d)}
		\right)^{1/s}
		\geq  \left( \| \xi \|^s_{\Pi^{\ast, s}_\infty \, f} - \varepsilon \right)^{1/s}, 
	\end{align*}
for every $\varrho \in (0, 1) \text{ and } \varepsilon (0, \| \xi \|^s_{\Pi^{\ast, s}_\infty \, f})$.
Lastly, letting $\varrho \searrow 0$, $\varepsilon \searrow 0$ and then $s \to 1^-$ in the above inequality, we infer that
	\begin{equation}\label{limsup-sup-t^s}
		\limsup_{s \to 1^-} \| \xi \|_{\Pi^{\ast, s}_\infty \, f} \leq \| \xi \|_{\Pi^\ast_\infty \, f} \lim_{s \to 1^-} \| 2 f \|^{(1 - s)/s}_{L^\infty(\R^n)} =  \| \xi \|_{\Pi^\ast_\infty \, f}.
	\end{equation}
Combining \eqref{liminf-sup-t^s} and \eqref{limsup-sup-t^s}, we can finally conclude that $\textstyle\lim_{s \to 1^-} \| \xi \|_{\Pi^{\ast, s}_\infty \, f} = \| \xi \|_{\Pi^\ast_\infty \, f}$.
This completes the proof of Proposition \ref{prop.lim.gauge}.
\end{proof}

\begin{remark}\label{rem.bounded-gauge}\normalfont
$ \, $

\smallskip

\textit{(i)}
A careful inspection of the proof of Proposition~\ref{prop.lim.gauge} shows a uniform bound in~$p$:
for any fixed ${s \in (0,1)}$, there exists a constant $M > 0$ (independent of~$p$) such that for sufficiently large~$p$,
\begin{equation}\label{ps_upperbound}
    \| \xi \|_{\Pi^{\ast, s}_p f} \le M 
    \quad \text{for a.e. } \xi \in \mathbb{S}^{n-1}.
\end{equation}

Indeed, applying the estimate \eqref{esti.iii-02} to the case $\varrho = 1$ and taking into account that $\| \xi \|_{\Pi^\ast_\infty \, f} \leq \| \nabla f \|_{L^\infty(\R^n)}$ we have 
\begin{align*}
	\| \xi \|^{s}_{\Pi^{\ast, s}_p \, f} \leq \| \nabla f \|^{s(1- 1/p)}_{L^\infty(\R^n)} 
	\Big( &
	\| \nabla f  \|_{L^\infty(\R^n)}^{(1 - 1/p)(1 - s)} \| \nabla f \|_{L^1(\R^n)}^{1/p}
    \\
    &
	+ 	s^{-1} (p(1 - s))^{1/p} \| 2f \|_{L^\infty(\R^n)}^{(1 - 1/p)(1 - s)} \| 2f \|_{L^1(\R^n)}^{1/p}
	\Big).
\end{align*}
Since $\textstyle\lim_{p \to \infty} (p(1 - s))^{1/p} = 1$ and $\textstyle\lim_{p \to \infty} \alpha^{1/p} = 1$ for $\alpha > 0$, the above estimate implies our desired conclusion~\eqref{ps_upperbound}.

\medskip

\textit{(ii)} Notice that by Morrey embedding, any function $f \in W^{1, \infty}(\R^n)$ is Lipschitz, where the Lipschitz constant depends only on dimension and $\| f \|_{W^{1, \infty}(\R^n)}$.
Therefore, there exists a constant $M$ depending only on dimension $n$, $\| f \|_{L^\infty}$ and $\| \nabla  f \|_{L^\infty}$ such that
\begin{align*}
    |f(x) - f(y)| \leq M |x - y|^s \text{ for every }x, y \in \R^n \text{ and } s \in (0, 1).
\end{align*}
In particular, we have
\begin{align*}
    \| \xi \|_{\Pi^{\ast, s}_\infty \, f} \leq 1 + M^2 \quad \text{ for every } \xi \in \Sphere \text{ and } s \in (1/2, 1).
\end{align*}

\medskip

\textit{(iii)} A simple calculation shows that for any nonzero $f \in W^{1, 1}(\R^n) \cap W^{1, \infty}(\R^n)$, the following estimate holds
\begin{align*}
    \| \xi \|_{\Pi^{\ast}_p \, f} \leq \| \nabla f \|_{L^\infty(\R^n)} + \| \nabla f \|_{L^1(\R^n)} \quad \text{ for every } \xi \in \Sphere \text{ and } p > 1.
\end{align*}
\end{remark}

\section{Asymptotic behavior}\label{sec:asymptotics}

This section presents our main results on the limiting behavior of volumes and dual mixed volumes associated with the $s$--fractional $L^p$ polar projection bodies, see Theorems \ref{thm.volume}--\ref{thm.mixed}. 
The second main result, Theorem~\ref{thm.mixed-dila}, captures asymptotics of the quantity $\vtil_{-sp}(K, \Pi^{\ast, s}_p \, f)^{1/p}$.

\begin{theorem}\label{thm.volume}
    Let $f \in W^{1, 1}(\R^n) \cap W^{1, \infty}(\R^n)$ be nonzero.
    Then, the following assertions hold true:
    \begin{itemize}
        \item[(i)] for any fixed $s \in (0, 1)$, one has $        \textstyle\lim_{p \to \infty} |\Pi^{\ast, s}_p  f| =  |\Pi^{\ast, s}_\infty f|$; 

        \item[(ii)] $\textstyle\lim_{p \to \infty} |\Pi^\ast_p  f| = |\Pi^\ast_\infty  f|$;

        \item[(iii)] $\textstyle\lim_{s \to 1^-} |\Pi^{\ast, s}_\infty \, f| = |\Pi^\ast_\infty \, f|$.
    \end{itemize}
\end{theorem}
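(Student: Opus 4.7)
My plan is to reduce all three assertions to applications of the dominated convergence theorem on $\Sphere$, via the gauge representation $|K| = \frac{1}{n}\int_{\Sphere} \|\xi\|_K^{-n}\,d\xi$ recalled in~\eqref{gauge-volume}. The pointwise convergence of the gauge functions has already been supplied by Proposition~\ref{prop.lim.gauge}, so the only additional input needed in each case is a uniform-in-the-parameter upper bound on $\|\xi\|^{-n}$, equivalently a uniform positive lower bound on the relevant gauges, which amounts to a uniform-in-the-parameter containment of the bodies in a fixed Euclidean ball.

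For part~(i), Proposition~\ref{prop.lim.gauge}(i) gives the a.e.\ limit $\|\xi\|_{\Pi^{\ast, s}_p f}\to\|\xi\|_{\Pi^{\ast, s}_\infty f}$ on $\Sphere$, while Proposition~\ref{prop.exist.infi.polar}(i) supplies $\Pi^{\ast, s}_p f\subset c B^n$ for every $p\geq\bar p$, equivalently $\|\xi\|_{\Pi^{\ast, s}_p f}\geq 1/c$ uniformly in $p$ and in $\xi\in\Sphere$. Since the constant $c^{n}$ is integrable over $\Sphere$, the dominated convergence theorem immediately yields the claim. Part~(iii) follows from the identical recipe, using Proposition~\ref{prop.lim.gauge}(iii) for the pointwise limit and Proposition~\ref{prop.exist.infi.polar}(ii), which furnishes $\Pi^{\ast, s}_\infty f\subset c B^n$ for all $s\in(\bar s,1)$, as the dominating ingredient.

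Part~(ii) is the only one that requires additional work, because Proposition~\ref{prop.exist.infi.polar} does not directly contain $\Pi^\ast_p f$ in a common ball uniformly in~$p$. My plan is to upgrade the pointwise convergence of Proposition~\ref{prop.lim.gauge}(ii) to uniform convergence on $\Sphere$. The crucial observation is that $\xi\mapsto\|\xi\|_{\Pi^\ast_p f} = \|\langle\nabla f(\cdot),\xi\rangle\|_{L^p(\R^n)}$ is a seminorm on $\R^n$; combining the triangle inequality with Remark~\ref{rem.bounded-gauge}(iii) and positive homogeneity, one obtains $\|\xi\|_{\Pi^\ast_p f}\leq(\|\nabla f\|_{L^\infty(\R^n)}+\|\nabla f\|_{L^1(\R^n)})|\xi|$ uniformly in $p\geq 1$, so that the family $\{\|\cdot\|_{\Pi^\ast_p f}\}_{p\geq 1}$ is uniformly Lipschitz, hence equicontinuous on $\Sphere$. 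Arzelà--Ascoli together with the pointwise limit then forces the entire family to converge \emph{uniformly} to $\|\cdot\|_{\Pi^\ast_\infty f}$ on $\Sphere$. Since the limit satisfies $\|\xi\|_{\Pi^\ast_\infty f}\geq c_0>0$ uniformly on $\Sphere$ (by the boundedness of $\Pi^\ast_\infty f$ established in Proposition~\ref{prop.exist.infi.polar}(iii)), we deduce $\|\xi\|_{\Pi^\ast_p f}\geq c_0/2$ for all sufficiently large $p$ and all $\xi\in\Sphere$, and the dominated convergence theorem again closes the argument.

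I expect the main obstacle to be precisely this uniform-in-$\xi$ lower bound needed for part~(ii): the Arzelà--Ascoli detour via equicontinuity looks like the cleanest route, but a direct concentration argument in the spirit of Claim~\ref{claim.W_alpha_bounded}, carried out for the directional derivative $\langle\nabla f,\xi\rangle$ with careful bookkeeping of the dependence on $\xi$, should also work. Parts~(i) and~(iii) are, by contrast, routine consequences of the $L^\infty$ boundedness statements already recorded in Proposition~\ref{prop.exist.infi.polar}.
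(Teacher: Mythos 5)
Your strategy for parts (i) and (iii) coincides exactly with the paper's: pointwise convergence of gauges from Proposition~\ref{prop.lim.gauge}, uniform containment of the bodies in a ball from Proposition~\ref{prop.exist.infi.polar} (items (i) and (ii) respectively), and dominated convergence via the representation~\eqref{gauge-volume}. Those two parts are correct and essentially identical to the paper's proof.

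For part (ii) you take a genuinely different --- and more careful --- route, and this is worth commenting on. The paper disposes of (ii) with ``the arguments for (ii) and (iii) are analogous,'' but the required domination input, namely a uniform-in-$p$ lower bound $\|\xi\|_{\Pi^\ast_p f}\ge c_0>0$ on $\Sphere$ for $p$ large, is not recorded in Proposition~\ref{prop.exist.infi.polar} or Remark~\ref{rem.bounded-gauge}. You correctly identified this as the nontrivial point: the obvious estimate $\|g\|_{L^p}\ge(M-\varepsilon)|\{|g|>M-\varepsilon\}|^{1/p}$ produces a lower bound whose rate depends on the superlevel-set measure, which varies with $\xi$, so direct domination is not immediate. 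Your fix --- note that $\xi\mapsto\|\xi\|_{\Pi^\ast_p f}$ is a seminorm, combine this with the $p$-uniform upper bound of Remark~\ref{rem.bounded-gauge}(iii) to obtain a $p$-uniform Lipschitz modulus, and then invoke equicontinuity $+$ pointwise convergence on the compact set $\Sphere$ to upgrade to uniform convergence, from which the uniform positive lower bound is inherited from $\|\cdot\|_{\Pi^\ast_\infty f}$ --- is correct and self-contained. (Arz\`ela--Ascoli is a slightly heavier hammer than needed: equicontinuity plus pointwise convergence on a compact metric space already gives uniform convergence by a direct $\varepsilon/3$ argument, without passing through subsequences; but that is a cosmetic point.) The alternative you mention, a direct concentration estimate for $\langle\nabla f,\xi\rangle$, would also work but requires controlling superlevel-set measures uniformly in $\xi$, e.g.\ via an interpolation inequality $\|g\|_{L^{p_0}}\le\|g\|_{L^1}^\theta\|g\|_{L^p}^{1-\theta}$ with $p_0$ fixed, which in turn presumes boundedness of $\Pi^\ast_{p_0}f$. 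Your Arz\`ela--Ascoli route avoids that detour and is the cleaner way to make the paper's ``analogous'' precise.
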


\begin{proof}
\textit{(i)}
Recall that 
	\begin{align*}
		| \Pi^{\ast, s}_p  f | = \dfrac{1}{n} \int_{\Sphere} \| \xi \|^{-n}_{\Pi^{\ast, s}_p  f} \, d\xi.
	\end{align*}
Owing to Proposition~\ref{prop.exist.infi.polar}--\textit{(i)}, the family of functions $ \big\{ \| \cdot \|^{-n}_{\Pi^{\ast, s}_p \, f} \big\}_{p \geq 1}$ is uniformly bounded on $\Sphere$ for every sufficiently large $p$.
Furthermore, Proposition~\ref{prop.lim.gauge}--\textit{(i)} states the pointwise convergence: $\textstyle\lim_{p \to \infty} \| \xi \|_{\Pi^{\ast, s}_p \, f}^{-n} = \| \xi \|_{\Pi^{\ast, s}_\infty \, f}^{-n}$ for a.e $\xi \in \Sphere$.
Therefore, using the Lebesgue dominated convergence theorem, we conclude 
\begin{align*}
	\lim_{p \to \infty} | \Pi^{\ast, s}_p f | = \dfrac{1}{n} \int_{\Sphere} \| \xi \|^{-n}_{\Pi^{\ast, s}_\infty f} \, d\xi = | \Pi^{\ast, s}_\infty f |.
\end{align*}

\medskip

The arguments for \textit{(ii)} and \textit{(iii)} are analogous, which completes the proof of Theorem \ref{thm.volume}.
\end{proof}

Using the definition \ref{vtil} together with similar arguments as in the proof of Theorem~\ref{thm.volume}, we obtain the following limiting behavior for the dual mixed volume pf $\Pi^{\ast, s}_p f$. 

\begin{theorem}\label{thm.mixed}
    Let $f \in W^{1, 1}(\R^n) \cap W^{1, \infty}(\R^n)$ be nonzero.
	Assume that one of the following conditions is fulfilled:
	\begin{itemize}
		\item[(a)] $K$ is a star body and $q > n$;
		
		\item[(b)] $K$ is a bounded star body and $q \in (- \infty, n) \setminus \{ 0 \}$.
    \end{itemize}
    Then, the following limits hold:
    \begin{itemize}
        \item[(i)] for any fixed $s \in (0, 1)$, $\textstyle\lim_{p \to \infty} \vtil_{q}(K, \Pi^{\ast, s}_p f) = \vtil_{q}(K, \Pi^{\ast, s}_\infty  f)$;
        
        \item[(ii)] $\textstyle\lim_{p \to + \infty} \vtil_{q}(K, \Pi^{\ast}_p \, f) = \vtil_{q}(K, \Pi^{\ast}_\infty \, f)$;
        
        \item[(iii)]  $\textstyle\lim_{s \to 1^-} \vtil_{q}(K, \Pi^{\ast, s}_\infty  f) = \vtil_{q}(K, \Pi^{\ast}_\infty f)$.
    \end{itemize}
\end{theorem}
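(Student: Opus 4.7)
The plan is to mimic the proof of Theorem~\ref{thm.volume}, applying Lebesgue's dominated convergence theorem (DCT) to the integral representation
\[
\vtil_q(K, L) = \frac{1}{n} \int_{\Sphere} \|\xi\|_K^{\,q-n} \, \|\xi\|_L^{-q} \, d\xi,
\]
where $L$ denotes one of the three parameter-dependent star bodies. The pointwise a.e.\ convergence of $\|\xi\|_L$ to the relevant limit is already supplied by the three parts of Proposition~\ref{prop.lim.gauge}, so the only work is to produce, in each of the three cases, an integrable dominant on $\Sphere$ that is uniform in the running parameter.

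The $K$-factor $\|\xi\|_K^{\,q-n}$ is immediately controlled by the hypotheses: under (a), since $K$ is a star body, $\rho_K$ is positive and continuous on $\Sphere$ hence bounded below there, so the factor is bounded because $q - n > 0$; under (b), $K$ is a bounded star body, so $\rho_K$ enjoys two-sided control on $\Sphere$ and the factor is bounded for any $q \in (-\infty, n) \setminus \{0\}$. Consequently, the substantive issue is the $L$-factor $\|\xi\|_L^{-q}$: for $q > 0$ we need a uniform-in-parameter lower bound on $\|\xi\|_L$, and for $q < 0$ a uniform upper bound.

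These uniform bounds are harvested from Section~\ref{sec:linfty-bodies}. For (i), with $s$ fixed and $p \to \infty$, I would invoke Proposition~\ref{prop.exist.infi.polar}-(i) for the lower bound on $\|\xi\|_{\Pi^{\ast, s}_p f}$ valid for $p \geq \bar p(s, f)$, and Remark~\ref{rem.bounded-gauge}-(i) for the uniform upper bound. For (iii), with $s \to 1^-$, Proposition~\ref{prop.exist.infi.polar}-(ii) provides the lower bound for $s$ close to $1$, and Remark~\ref{rem.bounded-gauge}-(ii) the upper bound. Coupled with the pointwise convergence from Proposition~\ref{prop.lim.gauge}, DCT delivers the two limits.

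The piece that requires slightly more care is (ii), specifically the regime $q > 0$, where a lower bound on $\|\xi\|_{\Pi^\ast_p f}$ uniform in large $p$ is needed (the upper bound for $q < 0$ is immediate from Remark~\ref{rem.bounded-gauge}-(iii)). My plan for this lower bound is to combine Proposition~\ref{prop.exist.infi.polar}-(iii), which via continuity and compactness yields $\|\xi\|_{\Pi^\ast_\infty f} \geq m > 0$ on $\Sphere$, with an equicontinuity argument: the interpolation $\|\nabla f\|_{L^p} \leq \|\nabla f\|_{L^\infty}^{1 - 1/p} \|\nabla f\|_{L^1}^{1/p}$ bounds $\|\nabla f\|_{L^p}$ uniformly for $p$ large, which makes the maps $\xi \mapsto \|\xi\|_{\Pi^\ast_p f}$ uniformly Lipschitz on $\Sphere$. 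Equicontinuity together with the pointwise limit from Proposition~\ref{prop.lim.gauge}-(ii), via Arzel\`a--Ascoli, upgrade the convergence to uniform on $\Sphere$, so $\|\xi\|_{\Pi^\ast_p f} \geq m/2$ for $p$ large uniformly in $\xi$, closing the DCT argument.
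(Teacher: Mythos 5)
Your proposal is correct and follows the same dominated-convergence skeleton as the paper: bound the $K$-factor via the star-body hypotheses, and the $\Pi$-factor via the uniform-in-parameter bounds supplied by Proposition~\ref{prop.exist.infi.polar} and Remark~\ref{rem.bounded-gauge}, then invoke the pointwise convergence from Proposition~\ref{prop.lim.gauge}. You correctly isolated the only non-immediate ingredient, namely a uniform lower bound on $\|\cdot\|_{\Pi^\ast_p f}$ over $\Sphere$ for large $p$, which the paper waves away with ``similar arguments'' but never records in a proposition; your equicontinuity/Arzel\`a--Ascoli argument (seminorms uniformly bounded on $\Sphere$ are equi-Lipschitz, plus pointwise convergence, give uniform convergence to $\|\cdot\|_{\Pi^\ast_\infty f} \geq m > 0$) closes that gap cleanly. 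A marginally more elementary route exists: pick $r$ large so that $\int_{\R^n\setminus rB^n}|\nabla f|\,dx < \tfrac12\min_{\Sphere}\|\cdot\|_{\Pi^\ast_1 f}$, then H\"older on $rB^n$ gives $\|\xi\|_{\Pi^\ast_p f} \geq |rB^n|^{1/p-1}\int_{rB^n}|\langle\nabla f,\xi\rangle|\,dx$, uniformly bounded below for large $p$ — but both routes are sound and the conclusion stands.
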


\begin{proof}[Proof of Theorem~\ref{thm.mixed}]
\textit{(i)} Recall  that for any $q \not\in \{ 0, n \}$, 
\begin{align*}
	\vtil_{q}(K,  \Pi^{\ast, s}_p  f) = \dfrac{1}{n} \int_{\Sphere} \| \xi \|^{- n + q}_K \| \xi \|^{-q}_{\Pi^{\ast, s}_p f} \, d\xi.
\end{align*}
Fix $s \in (0, 1)$.
Let us proceed by considering two cases.

\smallskip

\textbf{Case 1:} \textit{$K$ is a star body and $q > n$}. 
Thanks to the continuity of $\xi \mapsto \| \xi \|_K$, the compactness of $\Sphere$ implies that the map $\xi \mapsto \| \xi \|_K^{- n + q}$ is bounded on $\Sphere$.
Further, Proposition~\ref{prop.exist.infi.polar}--\textit{(i)} implies that the family $\big\{ \| \cdot \|_{\Pi^{\ast, s}_p \, f}^{-q} \big\}_{p \geq 1}$ is uniformly bounded on the unit sphere.
Therefore, applying the Lebesgue dominated convergence theorem, together with Proposition~\ref{prop.lim.gauge}--\textit{(i)}, we deduce 
\begin{align*}
	\lim_{p \to \infty} \vtil_{q}(K, \Pi^{\ast, s}_p f) = & ~ \frac{1}{n} \int_{\Sphere} \lim_{p \to \infty} \| \xi \|^{- n + q}_K \| \xi \|^{-q}_{\Pi^{\ast, s}_p \, f} \, d\xi \\
    = & ~  \frac{1}{n} \int_{\Sphere} \| \xi \|^{- n + q}_K \| \xi \|^{-q}_{\Pi^{\ast, s}_\infty f} \, d\xi =  \vtil_{q}(K, \Pi^{\ast, s}_\infty  f).
\end{align*}

\medskip

\textbf{Case 2:} \textit{ $K$ is a bounded star body and $q \in (- \infty, n) \setminus \{ 0 \}$.}
Using the fact that $K$ is a bounded star body and $q < n$, we infer that the map $\xi \mapsto \| \xi \|_K^{- n + q}$ is bounded on $\Sphere$.
If $q \in (0, n)$, the proof proceeds exactly as in Case 1.
If $q < 0$, Remark~\ref{rem.bounded-gauge}--\textit{(i)} implies that, for $\bar p > 1$ sufficiently large, the family $\big\{ \| \cdot \|_{\Pi^{\ast, s}_p \, f}^{-q} \big\}_{p \geq \bar p}$ is uniformly bounded almost everywhere on the unit sphere.
Finally, by the Lebesgue dominated convergence theorem, together with Proposition~\ref{prop.lim.gauge}--\textit{(i)}, the conclusion follows.

\medskip

We then can prove \textit{(ii)} and \textit{(iii)} by using similar arguments.
This completes the proof of Theorem~\ref{thm.mixed}.
\end{proof}

To continue, let us introduce the notion of $s$--scaled dilation factor.

\begin{definition}\label{def.dila}
	Let $s \in (0, 1]$ and let $K$ and $L$ be two star bodies.
	The $s$--scaled dilation factor of $K$ relative to $L$ is defined by
	\[
		\dila_s(K, L) := \sup_{\xi \in \Sphere} \| \xi \|_K^{-s} \| \xi \|_L^s = \sup_{\xi \in \Sphere} \rho_K(\xi)^s \rho_L(\xi)^{-s}.
	\]
	If $s = 1$, we simply write $\dila(K, L) := \dila_1(K, L)$.
\end{definition}

We remark that the dilation factor $\dila(K, L)$ quantifies how much one has to dilate $L$ to contain $K$:
\[
	\dila(K, L) = \inf \big\{ \lambda > 0 : K \subset \lambda L \big\}.
\]

\medskip

Our second main result, Theorem \ref{thm.mixed-dila}, establishes the asymptotic behavior of $\vtil_{-sq} (K, \Pi^{\ast, s}_p \, f)^{1/p}$. 
Its proof is an adaptation of the argument in Lemma \ref{prop.lim.gauge} and we include it here for completeness.

\begin{theorem}\label{thm.mixed-dila}
	Let $f \in W^{1, 1}(\R^n) \cap W^{1, \infty}(\R^n)$ be nonzero and let $K$ be a bounded star body.
	Then, the following limits hold: 
	\begin{itemize}
		\item[(i)] for any fixed $s \in (0, 1)$, one has $\textstyle\lim_{p \to \infty} \vtil_{-sp} (K, \Pi^{\ast, s}_p \, f)^{1/p} = \dila_s(K, \Pi^{\ast, s}_\infty \, f)$;
		
		\item[(ii)] $\textstyle\lim_{p \to \infty} \vtil_{-p}(K, \Pi^\ast_p \, f)^{1/p} = \dila(K, \Pi^\ast_\infty \, f)$;
		
		\item[(iii)] $\textstyle\lim_{s \to 1^-} \dila_s(K, \Pi^{\ast, s}_\infty \, f) = \dila(K, \Pi^\ast_\infty \, f)$.
	\end{itemize}
\end{theorem}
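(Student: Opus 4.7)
The strategy is to recognize $\vtil_{-sp}(K, \Pi^{\ast,s}_p f)^{1/p}$ as an $L^p$-norm on the unit sphere whose limit as $p \to \infty$ ought to be an $L^\infty$-norm. Indeed, from \eqref{vtil},
\begin{equation*}
	\vtil_{-sp}(K, \Pi^{\ast,s}_p f) = \dfrac{1}{n}\int_{\Sphere} \|\xi\|_K^{-n}\, g_p(\xi)^p\,d\xi, \qquad g_p(\xi) := \|\xi\|_K^{-s}\|\xi\|_{\Pi^{\ast,s}_p f}^{s},
\end{equation*}
and the target $\dila_s(K,\Pi^{\ast,s}_\infty f)$ is exactly the $L^\infty$-norm on $\Sphere$ of $g_\infty(\xi) := \|\xi\|_K^{-s}\|\xi\|_{\Pi^{\ast,s}_\infty f}^s$. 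Because the integrand itself depends on $p$, neither the classical $L^p \to L^\infty$ convergence for a fixed function nor dominated convergence applies directly.

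For part (i) the plan is to prove matching limsup and liminf bounds. For the \emph{upper bound} I would adapt the pointwise estimate behind the limsup part of Proposition~\ref{prop.lim.gauge}(i): the bound $|f(x+t\xi)-f(x)|^{p-1} \leq \|\xi\|_{\Pi^{\ast,s}_\infty f}^{s(p-1)}\,t^{s(p-1)}$, inserted into the defining integral of $\|\xi\|^{sp}_{\Pi^{\ast,s}_p f}$, yields
\begin{equation*}
	g_p(\xi)^p \leq p(1-s)\,\dila_s(K,\Pi^{\ast,s}_\infty f)^p\, \|\xi\|^{-s}_{\Pi^{\ast,s}_\infty f}\, h(\xi), \quad h(\xi) := \int_0^\infty \int_{\R^n} \frac{|f(x+t\xi)-f(x)|}{t^{s+1}} dx\,dt.
\end{equation*}
Since $K$ is a bounded star body and $\Pi^{\ast,s}_\infty f$ contains the origin in its interior (Proposition~\ref{prop.exist.infi.polar}(ii)), the weight $\|\xi\|_K^{-n}\|\xi\|_{\Pi^{\ast,s}_\infty f}^{-s}$ is bounded on $\Sphere$, while $\int_{\Sphere} h\,d\xi = [f]_{s,1}<\infty$ by \eqref{inclu-01}. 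Integrating and taking $p$-th roots yields $\limsup_p \vtil_{-sp}(K,\Pi^{\ast,s}_p f)^{1/p} \leq \dila_s(K,\Pi^{\ast,s}_\infty f)$. For the \emph{lower bound}, continuity of $g_\infty$ on the compact sphere lets me pick an open neighborhood $U$ of a maximizer $\xi^\ast$ on which $g_\infty > \dila_s - \varepsilon$; applying Egorov's theorem to the a.e.\ pointwise convergence $g_p \to g_\infty$ supplied by Proposition~\ref{prop.lim.gauge}(i) produces a subset $V\subset U$ of positive spherical measure on which $g_p \geq g_\infty - \varepsilon$ for $p$ large. Restricting the integral to $V$ then gives $\liminf_p \vtil_{-sp}(K,\Pi^{\ast,s}_p f)^{1/p} \geq \dila_s-2\varepsilon$, and letting $\varepsilon \downarrow 0$ finishes (i). Part (ii) follows the same blueprint, with the simpler interpolation $\|\xi\|_{\Pi^\ast_p f}^p \leq \|\xi\|_{\Pi^\ast_\infty f}^{p-1}\,\|\nabla f\|_{L^1(\R^n)}$ replacing the fractional estimate, and Proposition~\ref{prop.lim.gauge}(ii) providing everywhere pointwise convergence.

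For part (iii) I would invoke Arzel\`a--Ascoli. The family $\{g_s\}_{s\in(1/2,1)}$ is uniformly bounded on $\Sphere$ by Remark~\ref{rem.bounded-gauge}(ii), and uniformly H\"older continuous: the subadditivity $\|\xi+\eta\|^s_{\Pi^{\ast,s}_\infty f} \leq \|\xi\|^s_{\Pi^{\ast,s}_\infty f} + \|\eta\|^s_{\Pi^{\ast,s}_\infty f}$ combined with the uniform-in-$s$ bound of Remark~\ref{rem.bounded-gauge}(ii) forces $|g_s(\xi)-g_s(\eta)| \leq C\,|\xi-\eta|^{1/2}$ with $C$ independent of $s\in(1/2,1)$. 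Together with the pointwise convergence $g_s\to g_1$ from Proposition~\ref{prop.lim.gauge}(iii), this upgrades along any sequence $s_k \to 1^-$ to uniform convergence on $\Sphere$, so $\sup_\xi g_s(\xi)\to\sup_\xi g_1(\xi)$, i.e., $\dila_s(K,\Pi^{\ast,s}_\infty f)\to\dila(K,\Pi^\ast_\infty f)$.

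The main obstacle is the lower bound in (i): pointwise a.e.\ convergence is not, by itself, enough to move the $L^p$-norm to the $L^\infty$-norm when the integrand varies with $p$. Localization near a maximizer of $g_\infty$ combined with an Egorov-type uniform convergence on a set of positive spherical measure is the crucial ingredient, and such a maximizer has a positive-measure neighborhood on which $g_\infty$ is close to its supremum precisely because $g_\infty$ is continuous. A secondary technical point, in (iii), is extracting a H\"older constant for $g_s$ that is uniform in $s$ near $1$, which is where the uniform bound of Remark~\ref{rem.bounded-gauge}(ii) becomes essential.
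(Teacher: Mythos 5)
Your proposal is essentially correct, but it takes genuinely different routes in two places, and carries two minor inaccuracies worth flagging.

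\textbf{Comparison with the paper.} For part \textit{(i)}, both you and the paper obtain the upper bound from the same pointwise estimate (the one behind the $\limsup$ in Proposition~\ref{prop.lim.gauge}(i)); you integrate over $\Sphere$ while the paper integrates over $\R^n\times\R^n$ via the anisotropic Gagliardo representation, and by the polar change of variables these are literally the same integral. The genuine difference is in the lower bound. The paper restricts the double integral to the superlevel set $\widehat A_{s,\varepsilon}\subset\R^n\times\R^n$ of $(x,y)\mapsto |f(x)-f(y)|/\|x-y\|_K^s$, whose positive measure is immediate from continuity off the diagonal; you instead localize near a maximizer of $g_\infty$ on $\Sphere$ and invoke Egorov. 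Your route works (applying Egorov along an arbitrary sequence $p_k\to\infty$ and noting that the $\limsup$ bound already holds handles the fact that $p$ is a continuous parameter), but it is more machinery than is needed. For part \textit{(iii)} you take a genuinely different tack: Arzel\`a--Ascoli plus pointwise convergence upgrades to uniform convergence of $g_s$, whence $\sup g_s\to\sup g_1$. The paper instead runs a direct $\liminf$/$\limsup$ argument parallel to Proposition~\ref{prop.lim.gauge}(iii), with the $\limsup$ requiring the interpolation estimate~\eqref{esti.dila-02}. Your approach is conceptually cleaner and avoids that computation, at the cost of needing the equicontinuity and uniform-boundedness inputs.

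\textbf{Two points to fix.} First, in your upper bound for (i) you justify boundedness of the weight $\|\xi\|_K^{-n}\|\xi\|_{\Pi^{\ast,s}_\infty f}^{-s}$ by citing that $\Pi^{\ast,s}_\infty f$ contains the origin in its interior; this controls $\|\xi\|_{\Pi^{\ast,s}_\infty f}$ from \emph{above}, whereas you need it bounded \emph{below}, i.e.\ that $\Pi^{\ast,s}_\infty f$ is a bounded star body. That fact is true for every fixed $s\in(0,1)$ (it follows from the lower bound in Proposition~\ref{prop.exist.infi.polar}(i) together with the pointwise convergence in Proposition~\ref{prop.lim.gauge}(i) and continuity of the gauge), but it is not what Proposition~\ref{prop.exist.infi.polar}(ii) literally asserts; you should spell this out. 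Second, in part (iii) the claimed uniform H\"older bound $|g_s(\xi)-g_s(\eta)|\le C|\xi-\eta|^{1/2}$ does not follow for a general bounded star body $K$: the factor $\|\xi\|_{\Pi^{\ast,s}_\infty f}^s$ is indeed uniformly H\"older-$1/2$ (via subadditivity and Remark~\ref{rem.bounded-gauge}(ii)), but $\|\xi\|_K^{-s}$ is only as regular as $\|\cdot\|_K$, which for a star body is merely continuous. What you actually need for Arzel\`a--Ascoli is equicontinuity of $\{g_s\}_{s\in(1/2,1)}$, and that does hold: $\|\cdot\|_K$ is uniformly continuous on $\Sphere$ and bounded away from $0$ and $\infty$ there, so $t\mapsto t^{-s}$ is uniformly Lipschitz on the relevant range with constant independent of $s\in[1/2,1]$, giving a common (non-H\"older) modulus. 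With that substitution your Arzel\`a--Ascoli argument is sound.
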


\begin{proof}
Since the proofs of \textit{(i)} and \textit{(ii)} are similar, we present only the proof of \textit{(i)}.

\medskip

\textit{(i)}
Recall that the dual mixed volume $\vtil_{-sp}(K, \Pi^{\ast, s}_p \, f)$ can be expressed in terms of the anisotropic Gagliardo seminorm (see e.g. \cite[Page 1098]{HL_2024}):
	\begin{align*}
		n \vtil_{-sp}(K, \Pi^{\ast, s}_p \, f) = 
		\int_{\Sphere} \| \xi \|_K^{- n - ps} \| \xi \|_{\Pi^{\ast, s}_p \, f}^{ps} \, d\xi =  p(1 - s) \int_{\R^n} \int_{\R^n} \dfrac{|f(x) - f(y)|^p}{\| x - y \|_K^{n + sp}} \,dx \,dy.
	\end{align*}
Using the inclusion \eqref{inclu-01} and the fact that $K$ is a bounded star body, we first observe that
	\begin{equation}\label{fini.semi}
		\int_{\R^n} \int_{\R^n} \dfrac{|f(x) - f(y)|}{\| x - y \|_K^{n + s}} \, dx \, dy < + \infty.
	\end{equation}

\smallskip

On the one hand, for any fixed $p > 1$, we have the following estimate 
\begin{equation}\label{est.seminorm-01}
	\begin{split}
    \frac{n}{p (1 - s)} \widetilde V_{-sp}(K, \Pi^{\ast, s}_p \, f) = & ~ 
     \int_{\R^n} \int_{\R^n} \dfrac{|f(x) - f(y)|^{p - 1}}{\| x - y \|^{s(p - 1)}_K} \dfrac{|f(x) - f(y)|}{\| x - y \|^{n + s}_K} \, dx\,dy
     \\
    \leq & ~ 
   \dila_s(K, \Pi^{\ast, s}_\infty \, f)^{p - 1} \int_{\R^n} \int_{\R^n}  \dfrac{|f(x) - f(y)|}{\| x - y \|_K^{n + s}} \, dx \, dy,
   \end{split}
\end{equation}
where we have used the identity
\begin{equation}\label{dila-holder}
	\dila_s(K, \Pi^{\ast, s}_\infty \, f) = \sup_{\xi \in \Sphere} \| \xi \|^{-s}_K \| \xi \|_{\Pi^{\ast, s}_\infty \, f}^s = \sup_{x \neq y} \dfrac{|f(x) - f(y)|}{\| x - y \|_K^s}.
\end{equation}
Taking  first the $p$th root in the inequality~\eqref{est.seminorm-01} and then letting $p$ tend to $\infty$, thanks to \eqref{fini.semi}, we get
    \begin{equation}\label{est.vtil-dila-01}
        \limsup_{p \to \infty} \vtil_{-sp}(K, \Pi^{\ast, s}_p \, f)^{1/p} \leq \dila_s(K, \Pi^{\ast, s}_\infty \, f).
    \end{equation}
    
\medskip

On the other hand, for any fixed $\varepsilon \in (0, \dila_s(K, \Pi^{\ast, s}_\infty \, f))$, denote
    \begin{equation}\label{A_hat}
        \widehat A_{s, \varepsilon} 
        :=
		\left\{ 
		(x, y) \in \R^n \times \R^n :
		\dfrac{|f(x) - f(y)|}{\| x - y \|_K^s}
		> 
		\dila_s(K, \Pi^{\ast, s}_\infty f) - \varepsilon
		\right\}.
   \end{equation}
Notice that $|\widehat{A}_{s, \varepsilon}| > 0$.
By direct computation, we obtain
    \begin{equation}\label{est.A_hat}
    	\begin{split} 
         & ~  \dfrac{n}{p(1 - s)} \widetilde V_{-sp}(K, \Pi^{\ast, s}_p \, f) \\
          \geq &  ~  
          \iint_{\widehat A_{s, \varepsilon}} \dfrac{|f(x) - f(y)|^p}{\| x - y \|_K^{n + sp}} \, dx \, dy \\
         \geq & ~
         \left( \dila_s(K, \Pi^{\ast, s}_\infty \, f) - \varepsilon \right)^{p - 1} 
         \iint_{\widehat A_{s,\varepsilon}}\dfrac{\| x - y \|^{s(p - 1)}_K |f(x) - f(y)|}{\| x - y \|^{n + sp}_K } \,dx\,dy  \\
         = & ~ 
        \left( \dila_s(K, \Pi^{\ast, s}_\infty \, f) - \varepsilon \right)^{p - 1} 
        \iint_{\widehat A_{s, \varepsilon}} \dfrac{|f(x) - f(y)|}{\|x - y\|^{n + s}_K } \,dx\,dy.
         \end{split}
    \end{equation}
Therefore, taking first the $p$th root of the above inequality and letting $p$ tend to $\infty$, we arrive at
\[
        \liminf_{p \to \infty} \vtil_{-sp}(K, \Pi^{\ast, s}_p \, f)^{1/p} \geq \dila_s(K, \Pi^{\ast, s}_\infty \, f) - \varepsilon.
    \]
Since $\varepsilon > 0$ is arbitrary, it follows that
    \begin{equation}\label{est.vtil-dila-02}
        \liminf_{p \to \infty} \vtil_{-sp}(K, \Pi^{\ast, s}_p \, f)^{1/p} \geq \dila_s(K, \Pi^{\ast, s}_\infty \, f).
    \end{equation}
Combining \eqref{est.vtil-dila-01} and \eqref{est.vtil-dila-02}, we conclude that 
\[
\textstyle\lim_{p \to \infty} \vtil_{-sp}(K, \Pi^{\ast, s}_p \, f)^{1/p} = \dila_s(K, \Pi^{\ast, s}_\infty \, f).
\]

\medskip

\textit{(iii)}
Fix $\delta > 0$ such that
\[
    \delta < \dfrac{\dila(K, \Pi^\ast_\infty \, f)}{1 + \dila(K, \Pi^\ast_\infty \, f)}.
\]
Then, there exist $x_o \in \R^n$, $\xi_o \in \Sphere$ and $t_o > 0$
such that 
    \begin{equation}\label{dila-est-01}
    \begin{split}
	(1 - \delta) \, \dila(K, \Pi^\ast_\infty f)
	\leq & ~ \frac{|\langle \nabla f(x_o), \xi_o \rangle|}{\|\xi_o\|_K}
	\\
    = & ~ \lim_{t \to 0^+} \frac{|f(x_o + t \xi_o) - f(x_o)|}{t \| \xi_o \|_K }
	\leq \frac{|f(x_o + t_o \xi_o) - f(x_o)|}{t_o \| \xi_o \|_K } + \delta.
    \end{split}
	\end{equation}
Furthermore, a direct computation yields
    \begin{equation}\label{dila-est-02}
    \begin{split}
        \frac{|f(x_o + t_o \xi_o) - f(x_o)|}{t_o \| \xi_o \|_K }
        = & ~
        (t_o \| \xi_o \|_K)^{s - 1} \dfrac{|f(x_o + t_o \xi_o) - f(x_o)|}{\|t_o \xi_o \|_K^s}
        \\
        \leq & ~  (t_o \| \xi_o \|_K)^{s - 1} \dila_s(K, \Pi^{\ast, s}_\infty \, f),
    \end{split}
    \end{equation}
for every $s \in (0, 1)$.
Combining \eqref{dila-est-01} and \eqref{dila-est-02}, we obtain
    \begin{align*}
        (1 - \delta) \dila(K, \Pi^\ast_\infty \, f) - \delta
        \leq
         (t_o \| \xi_o \|_K)^{s - 1} \dila_s(K, \Pi^{\ast, s}_\infty \, f) \quad \text{ for every } s \in (0, 1).
    \end{align*}
Letting first $s \to 1^-$ and then $\delta \searrow 0$, we deduce
    \begin{equation}\label{liminf-dila}
        \dila(K, \Pi^\ast_\infty \, f) \leq \liminf_{s \to 1^-}  \dila_s(K, \Pi^{\ast, s}_\infty \, f) \, .
    \end{equation}
    
\medskip

It remains to check the inequality $\dila(K, \Pi^\ast_\infty \, f) \geq \textstyle\limsup_{s \to 1^-} \dila_s(K, \Pi^{\ast, s}_\infty \, f)$.
Without loss of generality, assume that that $\textstyle\limsup_{s \to 1^-} \dila_s(K, \Pi^{\ast, s}_\infty \, f) > 0$ and hence there exist $\bar s \in (0, 1)$ such that $\dila_s(K, \Pi^{\ast, s}_\infty \, f) > 0$ for every $s \in (\bar s, 1)$.
Let us fix $s \in (\bar s, 1)$ and $\varepsilon \in (0, \dila_s(K, \Pi^{\ast, s}_\infty \, f))$.
Let $\widehat  A_{s, \varepsilon}$ be defined as in \eqref{A_hat}.

\medskip

For any fixed $p > 1$, using a simple change of variables and the estimate \eqref{est.A_hat}, we obtain
	\begin{equation}\label{esti.dila-01}
		\begin{split}
		\dfrac{1}{p(1 - s)}\int_{\Sphere} \| \xi \|_K^{-n - ps} \| \xi \|^{ps}_{\Pi^{\ast, s}_p f} \, d\xi = & ~ \dfrac{n}{p(1 - s)} \widetilde V_{-sp}(K, \Pi^{\ast, s}_p f) \\
		\geq & ~ 
		\left( \dila_s(K, \Pi^{\ast, s}_\infty  f) - \varepsilon \right)^{p - 1} 
		\iint_{\widehat A_{s, \varepsilon}} 
		\dfrac{|f(x) - f(y)|}{\| x - y \|_K^{n + s}} \, dx \, dy.
		\end{split}
	\end{equation}
Thanks to the estimate \eqref{esti.iii-02}, for every $\varrho > 0$ and $s \in (0, 1)$, one has
	\begin{equation}\label{esti.dila-02}
    \begin{split}
		& ~ \dfrac{1}{p(1 - s)}\| \xi \|^{sp}_{\Pi^{\ast, s}_p f} \\
		\leq & ~ 
        \| \xi \|_{\Pi^\ast_\infty \, f}^{s(p - 1)} 
		\left(
		\dfrac{\varrho^{p(1 - s)}}{p(1 - s)} \| \nabla f \|^{(p - 1)(1 - s)}_{L^\infty(\R^n)} \| \nabla f \|_{L^1(\R^n)} 
		+
		\dfrac{1}{s \varrho^s} \| 2f \|^{(p - 1)(1 - s)}_{L^\infty(\R^n)} \| 2f \|_{L^1(\R^n)} 
		\right).
    \end{split}
	\end{equation}
Notice that the function $r \mapsto r^{1/p}$ is concave for $p > 1$.
Since $K$ and $\Pi^{\ast}_\infty \, f$ are bounded star bodies, there exists a constant $c_o > 0$ such that $\| \xi \|_K^{-n} \| \xi \|^{-s}_{\Pi^{\ast}_\infty \, f} \leq c_o$ for every $\xi \in  \Sphere$.
Therefore, it follows from \eqref{esti.dila-01} and \eqref{esti.dila-02} that
	\begin{equation}
		\begin{split}
		&~ B_{\varrho, s, p} \left( \int_{\Sphere} \| \xi \|_K^{-ps} \| \xi \|^{ps}_{\Pi^\ast_\infty \, f} d\xi \right)^{1/p} \\
		\geq & ~ 
		\left( \dila_s(K, \Pi^{\ast, s}_\infty  \, f) - \varepsilon \right)^{1 - 1/p} 
		\left(  
			\iint_{\widehat A_{s, \varepsilon}} 
		\dfrac{|f(x) - f(y)|}{\| x - y \|_K^{n + s}} \, dx \, dy
		\right)^{1/p},
		\end{split}
	\end{equation}
where
	\[
		B_{\varrho, s, p}  := c_o^{1/p} 
		\Bigg(  
			\dfrac{\varrho^{(1 - s)}}{(p(1 - s))^{1/p}}
			\| \nabla f \|^{\left(1 - 1/p \right)(1 - s)}_{L^\infty(\R^n)} \| \nabla f \|_{L^1(\R^n)}^{1/p} 
		+
		\dfrac{1}{(s \varrho^s/2)^{1/p} } \| 2f \|^{\left(1 - 1/p\right)(1 - s)}_{L^\infty(\R^n)} \| 2f \|_{L^1(\R^n)}^{1/p}
		\Bigg).
	\]
Letting $p$ tend to $\infty$, we arrive at
	\begin{equation}
		\left( \varrho^{1 - s} \| \nabla f \|_{L^{\infty}(\R^n)}^{1 - s} + \| 2f \|_{L^{\infty}(\R^n)}^{1 - s} \right)
		\underbrace{\sup_{\xi \in \Sphere} \| \xi \|_K^{-s} \| \xi \|_{\Pi^\ast_\infty \, f}^s}_{= \, \dila(K, \Pi^{\ast}_\infty \, f)^s} 
		\geq 
		\dila_s(K, \Pi^{\ast, s}_\infty  \, f) - \varepsilon.
	\end{equation}
Finally, letting first $\varrho \searrow 0$ and then $\varepsilon \searrow 0$, the above inequality implies that
	\begin{equation}\label{limsup-dila}
		\limsup_{s \to 1^-} \dila_s(K, \Pi^{\ast, s}_\infty  \, f) 
		\leq 
		\dila(K, \Pi^{\ast}_\infty \, f).
	\end{equation}
Collecting \eqref{liminf-dila} and \eqref{limsup-dila}, we finally conclude that $\textstyle\lim_{s \to 1^-} \dila_s(K, \Pi^{\ast, s}_\infty  \, f)  = \dila(K, \Pi^{\ast}_\infty \, f)$, which completes the proof of Theorem \ref{thm.mixed-dila}.
\end{proof}







\section{Asymmetric analogues and geometric inequalities}\label{sec:asymmetric}

In this final section, we discuss our results for the asymmetric variants of the fractional $L^p$ polar projection body.
Furthermore, we derive related P\'olya--Szeg\H{o} and functional isoperimetric--type inequalities within the $L^\infty$ framework.

\subsection{Asymmetric ($s$--fractional) $L^\infty$ polar projection bodies}

Asymmetric (fractional) $L^p$ polar projection bodies refine the classical construction via the use of positive and negative parts (instead of the absolute value) in the definition of gauge functions.
This modification yields sharper affine Sobolev inequalities, see~\cite{HS_2009, HL_2024}.
The main contributions of this section, Propositions~\ref{prop.lim.pm}--\ref{prop.limit.1/p.pm}, establish the asymptotic behavior of the asymmetric fractional $L^p$ polar projection bodies.

\medskip

From now on, for any real number $a \in \R$, we denote $a_+ = \max \{ a, 0 \}$ and $a_- = \max \{ -a , 0 \}$.

\begin{definition}Fix  $p \in [1, \infty)$ and $s \in (0, 1)$.
    \begin{itemize}
    		\item[$\bullet$] For any $f \in W^{1, p} (\R^n)$, its asymmetric $L^p$ polar projection body $\Pi^{\ast}_{p, +} \, f$ (respectively  $\Pi^{\ast}_{p, -} \, f$) is a star--shaped set defined via the gauge function, for every $\xi \in \R^n$
		\begin{align*}
		\| \xi \|_{\Pi^*_{p, +} \, f}^p := \int_{\R^n} \langle \nabla f(x), \xi \rangle_+^p dx
			\left( \text{respectively } 
            \| \xi \|_{\Pi^*_{p, -} \, f}^p := \int_{\R^n} \langle \nabla f(x), \xi \rangle_-^p dx 
            \right).
	\end{align*}
	
		\item[$\bullet$] 
        For a fixed $f \in W^{1, \infty}(\R^n)$, its asymmetric $s$--fractional $L^\infty$ polar projection body $\Pi^{\ast}_{\infty, +} \, f$ (respectively $\Pi^{\ast}_{\infty, -}$) is a star--shaped set defined via the gauge function, for every $\xi \in \R^n$
        \begin{align*}
            \| \xi \|_{\Pi^{\ast}_{\infty, +} \, f} 
            := \esssup_{x \in \R^n} 
            \, \langle \nabla f(x), \xi \rangle_+  \left( \text{respectively } 
            \| \xi \|_{\Pi^{\ast}_{\infty, -} \, f} 
            := \esssup_{x \in \R^n} 
            \, \langle \nabla f(x), \xi \rangle_- \right).
        \end{align*}

		\item[$\bullet$] For any $f \in W^{s, p}(\R^n)$, its asymmetric $s$--fractional $L^p$ polar projection body $\Pi^{\ast, s}_{p, +} \, f$ (resp $\Pi^{\ast, s}_{p, -} \, f$) is a star--shaped set defined via the gauge function, for every $\xi \in \R^n$
		\begin{align*}
		\| \xi \|^{ps}_{\Pi^{\ast, s}_{p, +} f} := p(1 - s)\int_0^\infty t^{-ps - 1} \int_{\R^n} (f(x + t\xi) - f(x))_+^p \, dx \, dt 
		\\
		\left( \text{respectively } 
            \| \xi \|^{ps}_{\Pi^{\ast, s}_{p, -} f} := p(1 - s)\int_0^\infty t^{-ps - 1} \int_{\R^n} (f(x + t\xi) - f(x))_-^p \, dx \, dt             
         \right).
		\end{align*}
	
	\item[$\bullet$] For a fixed $f \in W^{s, \infty}(\R^n)$, its asymmetric $s$--fractional $L^\infty$ polar projection body $\Pi^{\ast, s}_{\infty, +} \, f$ (respectively $\Pi^{\ast, s}_{\infty, -}$) is a star--shaped set defined via the gauge function, for every $\xi \in \R^n$
        \begin{align*}
            \| \xi \|_{\Pi^{\ast, s}_{\infty, +} \, f}^s 
            := \sup_{(x, t) \in \R^n \times (0, \infty)} 
            \dfrac{1}{t^s} (f(x + t\xi) - f(x))_+ \\
            \left( \text{respectively } 
            \| \xi \|_{\Pi^{\ast, s}_{\infty, -} \, f}^s 
            := \sup_{(x, t) \in \R^n \times (0, \infty)} 
            \dfrac{1}{t^s} (f(x + t\xi) - f(x))_- \right).
        \end{align*}
    \end{itemize}
\end{definition}

Under these definitions, we can obtain results analogous to Proposition \ref{prop.exist.infi.polar} and Lemma \ref{prop.lim.gauge}.
However, to derive lower bounds for the gauge functions similar to those in Proposition \ref{prop.exist.infi.polar} for asymmetric fractional $L^p$ polar projection bodies, some additional technical details are required.
These are provided in detail below.

\begin{lemma}\label{lem.exist.pm}
Let $f \in W^{1, 1}(\R^n) \cap W^{1, \infty}(\R^n)$ be nonzero.
The following assertions hold true:
	 \begin{itemize}
        \item[(i)] 
        For any fixed $s \in (0, 1)$, there exist $c = c(s, f) > 0$ and $\bar p = \bar p(s, f) > 1$ sufficiently large such that $\Pi^{\ast, s}_{p, \pm} \, f \subset c B^n$ for every $p \geq \bar p$. 
        
        \item[(ii)] 
        For any fixed $s \in (0, 1)$, the sets $\Pi^{\ast, s}_{\infty, +} \, f$ and $\Pi^{\ast, s}_{\infty, -} \, f$ are star bodies with the origin in their interior.
        Furthermore, there exists $c = c(f) > 0$ and $\bar s = \bar s(f) \in (0, 1)$ such that $\Pi^{\ast, s}_{\infty, \pm} \, f \subset c B^n$ for every $s \in (\bar s, 1)$.

        \item[(iii)]
        The sets $\Pi^{\ast}_{\infty, +} \, f$ and $\Pi^{\ast}_{\infty, -} \, f$ are star bodies with the origin in their interiors.
    \end{itemize}
\end{lemma}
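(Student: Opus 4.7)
My approach is to follow the outline of Proposition~\ref{prop.exist.infi.polar} closely, with two systematic modifications dictated by the one-sided nature of the new gauges. First, the elementary identity $(-a)_+ = a_-$ yields $\Pi^{\ast, s}_{p, -} f = \Pi^{\ast, s}_{p, +}(-f)$ and $\Pi^{\ast, s}_{\infty, -} f = \Pi^{\ast, s}_{\infty, +}(-f)$ (and similarly $\Pi^{\ast}_{\infty, -} f = \Pi^{\ast}_{\infty, +}(-f)$), so it suffices to prove each conclusion for the $+$ variant. Second, the symmetric proof exploited $|f|$ being large on both sides of a translation; for the one-sided gauge we must track signs explicitly. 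Set $M_o := \|f\|_{L^\infty(\R^n)} > 0$, and choose $\Omega := \{f \geq \tfrac{3}{4} M_o\}$ when $M_o = \esssup f$, or $\Omega := \{f \leq -\tfrac{3}{4} M_o\}$ otherwise. In either case, the argument of Claim~\ref{claim.W_alpha_bounded} shows that $\Omega$ and $\{|f| > \tfrac{1}{4} M_o\}$ are bounded; fix $R > 0$ with both contained in $R B^n$.

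For part~(i), I fix $\xi \in \Sphere$ and $t > 2R$. In the first case, every $x \in \Omega - t\xi$ satisfies $f(x + t\xi) \geq \tfrac{3}{4} M_o$ (since $x + t\xi \in \Omega$) and $f(x) \leq \tfrac{1}{4} M_o$ (since $x \notin RB^n$); in the second case, for every $x \in \Omega$ one has $f(x) \leq -\tfrac{3}{4} M_o$ and $f(x + t\xi) \geq -\tfrac{1}{4} M_o$. Either way, $(f(x + t\xi) - f(x))_+ \geq M_o/2$ on a translate of $\Omega$, yielding $\| (f(\cdot + t\xi) - f)_+ \|_{L^p(\R^n)} \geq (M_o/2)\,|\Omega|^{1/p}$ for all $t > 2R$. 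Inserting this into the definition of $\|\xi\|^{ps}_{\Pi^{\ast, s}_{p, +} f}$ and repeating the integral computation of Proposition~\ref{prop.exist.infi.polar}-(i) produces $\|\xi\|_{\Pi^{\ast, s}_{p, +} f} \geq c(s, f) > 0$ for $p \geq \bar p(s, f)$.

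For part~(ii), the triangle inequality $\|\xi + \eta\|^s_{\Pi^{\ast, s}_{\infty, +} f} \leq \|\xi\|^s_{\Pi^{\ast, s}_{\infty, +} f} + \|\eta\|^s_{\Pi^{\ast, s}_{\infty, +} f}$ follows from subadditivity of $(\cdot)_+$ together with the same splitting as in Proposition~\ref{prop.exist.infi.polar}-(ii). The homogeneity now takes the asymmetric form $\|t\xi\| = |t|\,\|\mathrm{sign}(t)\,\xi\|$ for $t \in \R$, so writing $x = \sum_j x_j e_j$ the triangle inequality delivers
\begin{equation*}
	\|x\|^s_{\Pi^{\ast, s}_{\infty, +} f}
	\leq \sum_{j = 1}^n |x_j|^s \, \|\mathrm{sign}(x_j)\, e_j\|^s_{\Pi^{\ast, s}_{\infty, +} f}
	\leq a \, |x|^s,
\end{equation*}
where $a$ depends only on the $2n$ finite values $\|\pm e_j\|_{\Pi^{\ast, s}_{\infty, +} f}$, giving the origin-interior and continuity properties exactly as in the symmetric case. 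Strict positivity of the gauge on $\R^n \setminus \{0\}$---which replaces the origin-symmetry step of the symmetric case---is obtained by the same monotone-along-lines argument used in~(iii) below. Finally, the $s$-uniform boundedness is extracted by setting $t = 2R + \varepsilon$ in the pointwise lower bound for $(f(\cdot + t\xi) - f)_+$, producing $\|\xi\|^s_{\Pi^{\ast, s}_{\infty, +} f} \geq (2R)^{-s} M_o/2$; taking $1/s$-th roots and using $\lim_{s \to 1^-} (M_o/2)^{1/s} = M_o/2 > 0$ yields a positive constant uniform in $s$ close to $1$.

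Part~(iii) is the most direct: the triangle inequality, the homogeneity $\|t\xi\| = |t|\,\|\mathrm{sign}(t)\xi\|$, and the upper bound $\|\xi\|_{\Pi^\ast_{\infty, +} f} \leq \|\nabla f\|_{L^\infty(\R^n)} |\xi|$ are immediate from the properties of $(\cdot)_+$ and $\esssup$. For strict positivity of the gauge on $\Sphere$, I argue by contradiction: if $\esssup_{x \in \R^n} \langle \nabla f(x), \xi_o \rangle_+ = 0$ for some $\xi_o \in \Sphere$, then $\langle \nabla f, \xi_o \rangle \leq 0$ almost everywhere, so $t \mapsto f(x + t\xi_o)$ is non-increasing for a.e.~$x$; Fubini's theorem and $f \in L^1(\R^n)$ then force a monotone integrable function on $\R$ to vanish identically, so $f \equiv 0$---a contradiction. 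I expect the main technical subtlety to lie in part~(ii), where one must verify that the lower bound extracted from the one-sided argument is genuinely independent of both $s$ and $\xi$, and that taking the $1/s$-th root preserves positivity in the limit $s \to 1^-$.
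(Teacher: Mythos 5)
Your proof is correct, and it follows the paper's overall strategy (reduce the argument to a lower bound on the gauge, then combine with the structural properties proved for the symmetric case) while departing from the paper at three places, each of which buys a slightly cleaner argument.

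First, you make the reduction from the ``$-$'' variant to the ``$+$'' variant explicit via the substitution $f \mapsto -f$ (using $(-a)_+ = a_-$); the paper only says ``it suffices to check the $+$ case'' without naming the symmetry. Second, for part~(i) the paper splits $\Sset_{3/4}$ into the two one-sided sets $\Sset_{3/4}^+$ and $\Sset_{3/4}^-$, derives an $L^p$ estimate over each, and combines them to obtain a lower bound $\tfrac14\|f\|_{L^\infty}|\Sset_{3/4}|^{1/p}$. You instead choose a \emph{single} set $\Omega$ --- either $\{f\ge\tfrac34M_o\}$ or $\{f\le-\tfrac34M_o\}$, depending on which side of $f$ realizes $\|f\|_{L^\infty}$ --- and work with one translate of $\Omega$; since only the condition $|\Omega|^{1/p}\to1$ matters as $p\to\infty$, this gives the same conclusion with half the bookkeeping. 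Third, for part~(ii) the paper averages ($L^\infty \ge |rB^n|^{-1}L^1$) and tracks $\|f_\pm\|_{L^1}$ separately, whereas you use the pointwise bound $(f(x+t\xi)-f(x))_+\ge M_o/2$ for one well-chosen $x$ and $t$ close to $2R$, which is more direct. Finally, for part~(iii) the paper appeals to analogy with the symmetric case; you spell out the genuinely new subtlety: vanishing of $\esssup\langle\nabla f,\xi_o\rangle_+$ forces $f$ to be monotone (not constant) along lines in direction $\xi_o$, and a monotone function integrable over $\R$ vanishes, so $f\equiv0$. This extra care is warranted since the asymmetric hypothesis only controls the sign of $\langle\nabla f,\xi_o\rangle$, not its modulus. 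One minor remark: the strict positivity of the gauge for fixed $s$ that you prove via the monotone-along-lines argument is not needed for the first claim in (ii) as stated (``star bodies with origin in their interior'' only requires the gauge to be finite, continuous, and bounded on $\Sphere$), but it does no harm and in fact shows the bodies are bounded for every $s$, not just $s$ near~$1$.
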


\begin{proof}
In what follows, we provide lower bounds for the gauge functions of $\Pi^{\ast, s}_{p, \pm} \, f$ in \textit{(i)} and $\Pi^{\ast, s}_{\infty, \pm} \, f$ in~\textit{(ii)}.
These arguments differ from those used in the proof of Proposition~\ref{prop.exist.infi.polar}. 
The remaining steps of the proof can be carried out analogously to those in Proposition~\ref{prop.exist.infi.polar}.

\medskip

\textit{(i)}
It suffices to check the inclusion $\Pi^{\ast, s}_{p, +} \, f \subset c B^n$.
For any fixed ${\alpha \in (0, 1)}$, set
\begin{align*}
	\Sset_{\alpha} := \left\{ x \in \R^n : |f(x)| \geq \alpha \| f \|_{L^\infty(\R^n)} \right\}
	\quad \text{ and } \quad 
	\Sset_{\alpha}^\pm := \left\{ x \in \R^n : f(x)_{\pm} \geq \alpha \| f \|_{L^\infty(\R^n)} \right\}.
\end{align*}
Note that $\Sset_{\alpha}^{+} \cup \Sset_{\alpha}^{-} = \Sset_{\alpha}$.
Thanks to Claim \ref{claim.W_alpha_bounded}, $\Sset_\alpha$ is bounded and hence $\Sset_{\alpha}^{\pm}$  are also bounded.
Let $r_o > 1$ be such that $\Sset_{3/4} \subset \Sset_{1/4} \subset r_o B^n$.
We first observe that
	\[
		\| f_+ \|_{L^p(\Sset_{3/4}^+)} \geq \dfrac{3}{4} \| f \|_{L^\infty(\R^n)} | \Sset_{3/4}^+ |^{1/p} 
		\quad \text{ and } \quad 
		\| f_- \|_{L^p(\Sset_{3/4}^-)} \geq \dfrac{3}{4} \| f \|_{L^\infty(\R^n)} | \Sset_{3/4}^- |^{1/p}.
	\]
Fix $\xi \in \Sphere$.
Due to the choice of $r_o$, for any $t \in \R$ with $|t| > 2r_o$, the sets $r_o B^n$ and $\Sset_{3/4} + t \xi$ are disjoint (and hence \(r_o B^n\) and \(\Sset_{3/4}^{\pm} + t \xi\) are also disjoint).
Therefore, we have
	\begin{align*}
		\| f_+ \|_{L^p(\Sset_{3/4}^+ + t\xi)} \leq \dfrac{1}{4} \| f \|_{L^\infty(\R^n)} | \Sset_{3/4}^+ |^{1/p}
		\quad \text{ and } \quad 
		\| f_- \|_{L^p(\Sset_{3/4}^- + t \xi)} \leq \frac{1}{4} \| f \|_{L^\infty(\R^n)} | \Sset_{3/4}^- |^{1/p}.
	\end{align*}
Notice that for any measurable set $U \subset \R^n$ and any functions $g, h \in L^p(U)$, it holds 
\[
\| (g - h)_\pm \|_{L^p(U)} \geq \| g_\pm \|_{L^p(U)} - \| h_\pm \|_{L^p(U)}.
\]
Fix $t > 2r_o$.
On the one hand, we have
\begin{align*}
	\| (f(\cdot + t\xi) - f )_+ \|_{L^p(\Sset_{3/4}^-)} 
    = & ~ 
    \| (f - f(\cdot + t \xi))_- \|_{L^p(\Sset_{3/4}^-)} \\
    \geq & ~ \| f_- \|_{L^p(\Sset^-_{3/4})} -  \underbrace{\| f(\cdot + t\xi)_- \|_{L^p(\Sset^-_{3/4})} }_{\, = \, \| f_- \|_{L^p(\Sset^-_{3/4} + t\xi)} } \\
	\geq & ~ \dfrac{1}{2} \| f \|_{L^\infty(\R^n)}|\Sset^-_{3/4}|^{1/p}.
\end{align*} 
On the other hand, estimating over the set $\Sset^+_{3/4} - t\xi$ gives
\begin{align*}
	\| (f(\cdot + t\xi) - f )_+ \|_{L^p(\Sset_{3/4}^+ - t\xi)} = & ~ \| (f - f(\cdot - t\xi))_+ \|_{L^p(\Sset_{3/4}^+)} \\
	\geq & ~ 
	\|	f_+ \|_{L^p(\Sset_{3/4}^+)} - \underbrace{\| f(\cdot - t \xi)_+ \|_{L^p(\Sset_{3/4}^+)} }_{ \, = \,  \| f_+ \|_{L^p(\Sset_{3/4}^+ - t \xi)}} 
	\\ 
	\geq & ~ \dfrac{1}{2} \| f \|_{L^\infty(\R^n)} |\Sset^+_{3/4}|^{1/p}.
\end{align*}
Collecting the above estimates, we arrive at
\begin{align*}
	\| (f(\cdot + t\xi) - f)_+ \|_{L^p(\R^n)} \geq \dfrac{1}{4} \| f \|_{L^\infty(\R^n)} (|\Sset^+_{3/4}|^{1/p} + |\Sset^-_{3/4}|^{1/p}) \geq \dfrac{1}{4} \| f \|_{L^\infty(\R^n)} |\Sset_{3/4}|^{1/p} > 0,
\end{align*}
for every $t > 2r_o$.
We proceed as in the proof of Proposition \ref{prop.exist.infi.polar} to obtain
\begin{align*}
	\| \xi \|^s_{\Pi^{\ast, s}_{p, +} \, f} \geq \dfrac{1}{4} (2r_o)^{-s}\| f \|_{L^\infty(\R^n)} |\Sset_{3/4}|^{1/p} \left( \dfrac{1 - s}{s} \right)^{1/p}.
\end{align*}
Consequently, we are able to concldue that there exist $c = c(s, f) > 0$ and $\bar p = \bar p(s, f) > 1$ satisfying $\Pi^{\ast, s}_{p, +} \, f \subset c B^n$ for every $p \geq \bar p$.

\medskip

\textit{(ii)}
We will provide a positive lower bound for $\| \cdot \|_{\Pi^{\ast, s}_{\infty, \pm} \, f}$ on $\Sphere$; again, it is sufficient to prove for $\Pi^{\ast, s}_{\infty, +}$.
Let $r > 1$ be such that
	\begin{align*}
		\| f_+ \|_{L^1(rB^n)} \geq \dfrac{2}{3} \| f_+ \|_{L^1(\R^n)} \quad \text{ and } \quad \| f_- \|_{L^1(rB^n)} \geq \dfrac{2}{3} \| f_- \|_{L^1(\R^n)}.
	\end{align*}
Fix $\xi \in \Sphere$.
For any $|t| > 2r$, two sets $rB^n$ and $rB^n - t\xi$ are disjoint and therefore we have
	\begin{align*}
		\| f_+ \|_{L^1(rB^n + t\xi)} \leq \dfrac{1}{3} \| f_+ \|_{L^1(\R^n)} \quad \text{ and } \quad \| f_- \|_{L^1(rB^n + t\xi)} \leq \dfrac{1}{3} \| f_- \|_{L^1(\R^n)}.
	\end{align*}
With these remarks, using triangle inequality, we obtain the following estimates for any $t \geq 2r$:
    \begin{align*}
    	\| (f(\cdot + t\xi) -  f)_+ \|_{L^\infty(rB^n - t\xi)} \geq & ~
	\dfrac{1}{|rB^n - t\xi|} \| (f(\cdot + t\xi)  - f)_+ \|_{L^1(rB^n - t\xi)}  \\
	= & ~ 
	\dfrac{1}{r^n |B^n|} \| (f - f(\cdot  - t\xi))_+ \|_{L^1(rB^n)} 
	\\
	\geq & ~ 	
	\dfrac{1}{r^n |B^n|} \Big( 
	\underbrace{  \| f_+ \|_{L^1(rB^n)} -  \| f(\cdot  - t\xi)_+ \|_{L^1(rB^n)} }_{
	\, = \, \| f_+ \|_{L^1(rB^n)} -  \| f_+ \|_{L^1(rB^n - t\xi)} 
	}
    \Big)
	\\
	\geq & ~ 
	\dfrac{1}{3 r^n |B^n|} \| f_+ \|_{L^1(\R^n)}
    \end{align*}
    and
    \begin{align*}
    	\| (f(\cdot + t\xi) -  f)_+ \|_{L^\infty(rB^n)} \geq & ~
	\dfrac{1}{|rB^n|} \| (f(\cdot + t\xi)  - f)_+ \|_{L^1(rB^n)}  \\
	= & ~ 
	\dfrac{1}{r^n |B^n|} \| (f - f(\cdot  + t\xi))_- \|_{L^1(rB^n)} 
	\\
	\geq & ~ 	
	\dfrac{1}{r^n |B^n|} 
	\Big( \underbrace{  \| f_- \|_{L^1(rB^n)} -  \| f(\cdot  + t\xi)_- \|_{L^1(rB^n)} }_{
	\, = \, \| f_- \|_{L^1(rB^n)} -  \| f_- \|_{L^1(rB^n + t\xi)} 
	}
    \Big)
	\\
	\geq & ~ 
	\dfrac{1}{3 r^n |B^n|} \| f_- \|_{L^1(\R^n)}.
    \end{align*}
Therefore, we obtain
	\begin{align*}
		\| (f(\cdot + t\xi) - f)_+ \|_{L^\infty(\R^n)} \geq \dfrac{1}{6 r^n |B^n|} \big( \| f_+ \|_{L^1(\R^n)} + \| f_- \|_{L^1(\R^n)} \big) = \dfrac{1}{6 r^n |B^n|} \| f \|_{L^1(\R^n)}. 
	\end{align*}
Using the definition $\| \cdot \|_{\Pi^{\ast, s}_\infty \, f}$, we are led to
	\begin{align*}
		\| \xi \|_{\Pi^{\ast, s}_\infty \, f} = \left( \sup_{t > 0} \dfrac{1}{t^s} \| (f(\cdot + t\xi) - f)_+ \|_{L^\infty(\R^n)} \right)^{1/s} \geq \dfrac{\upsilon^{1/s}}{2r} 
		\quad \text{ with } \quad
		\upsilon = \dfrac{\| f \|_{L^1(\R^n)}}{6r^n |B^n|}.
	\end{align*}
This implies the fact that there exists $c = c(f) > 0$ and $\bar s = \bar s(f) \in (0, 1)$ such that $\Pi^{\ast, s}_{\infty, \pm} \, f \subset c B^n$ for every $s \in (\bar s, 1)$.
Lemma \ref{lem.exist.pm} is proven.
\end{proof}

Next, one can obtain results  analogous to Proposition \ref{prop.lim.gauge} for the gauge functions of asymmetric fractional $L^p$ polar projection bodies.
Consequently, we get the asymptotic behavior of the associated volume and dual mixed volumes stated in the following propositions, whose proofs are omitted since they are similar to those in Theorem \ref{thm.volume}-\ref{thm.mixed} and Theorem \ref{thm.mixed-dila}.

\begin{proposition}\label{prop.lim.pm}
    Let $f \in W^{1, 1}(\R^n) \cap W^{1, \infty}(\R^n)$ be nonzero.
    Assume that one of the following conditions is fulfilled:
	\begin{itemize}
		\item[(a)] $K$ is a star body and $q > n$;
		
		\item[(b)] $K$ is a bounded star body and $q \in (- \infty, n) \setminus \{ 0 \}$.
    \end{itemize}
    Then, the following limits hold:
    \begin{itemize}
        \item[(i)] for any fixed $s \in (0, 1)$, one has
        \[
        \textstyle\lim_{p \to \infty} |\Pi^{\ast, s}_{p, \pm} \, f| =  |\Pi^{\ast, s}_{\infty, \pm} \, f|
        \text{ and } 
        \textstyle\lim_{p \to \infty} \vtil_{q}(K, \Pi^{\ast, s}_{p, \pm} \, f) = \vtil_q(K, \Pi^{\ast, s}_{\infty, \pm} \, f);
        \]

        \item[(ii)] 
        $\textstyle\lim_{p \to \infty} |\Pi^\ast_{p, \pm} \, f| = |\Pi^\ast_{\infty, \pm} \, f|$ 
        and
        $\textstyle\lim_{p \to \infty} \vtil_q(K, \Pi^{\ast}_{p, \pm} \, f) = \vtil_q(K, \Pi^{\ast}_{\infty, \pm} \, f)$
        ;

        \item[(iii)] 
        $\textstyle\lim_{s \to 1^-} |\Pi^{\ast, s}_{\infty, \pm} \, f| = |\Pi^\ast_{\infty, \pm} \, f|$ 
        and
        $\textstyle\lim_{s \to 1^-} \vtil_q(K, \Pi^{\ast, s}_{\infty, \pm} \, f) = \vtil_q(K, \Pi^{\ast}_{\infty, \pm} \, f)$.
    \end{itemize}
\end{proposition}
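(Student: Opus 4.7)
The plan is to reduce each of (i)--(iii) to pointwise convergence of gauge functions on $\Sphere$, then invoke the dominated convergence theorem exactly as in the symmetric cases of Theorems~\ref{thm.volume}--\ref{thm.mixed}. The author already flags this reduction, so the substantive work lies in establishing the asymmetric analogue of Proposition~\ref{prop.lim.gauge}, i.e.\ the three assertions
\[
\|\xi\|_{\Pi^{\ast,s}_{p,\pm}f}\xrightarrow{p\to\infty}\|\xi\|_{\Pi^{\ast,s}_{\infty,\pm}f},\quad
\|\xi\|_{\Pi^\ast_{p,\pm}f}\xrightarrow{p\to\infty}\|\xi\|_{\Pi^\ast_{\infty,\pm}f},\quad
\|\xi\|_{\Pi^{\ast,s}_{\infty,\pm}f}\xrightarrow{s\to 1^-}\|\xi\|_{\Pi^\ast_{\infty,\pm}f}.
\]

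First I would replay the three gauge--convergence proofs of Proposition~\ref{prop.lim.gauge} verbatim, with $|f(x+t\xi)-f(x)|$ replaced by $(f(x+t\xi)-f(x))_\pm$. For the $p\to\infty$ statement at fixed $s$, the limsup direction relies on the elementary bound $(f(x+t\xi)-f(x))_\pm^p \le \|\xi\|_{\Pi^{\ast,s}_{\infty,\pm}f}^{s(p-1)}\cdot t^{s(p-1)}(f(x+t\xi)-f(x))_\pm$, together with the fact that, because $(a)_\pm \le |a|$, the finiteness asserted in~\eqref{int-s+1-fini} still holds for the asymmetric integrand, almost everywhere on $\Sphere$. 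The liminf direction uses the asymmetric analogue of~\eqref{A_s_epsi}, namely $A^\pm_{s,\varepsilon}=\{(x,t):(f(x+t\xi)-f(x))_\pm > t^s(\|\xi\|^s_{\Pi^{\ast,s}_{\infty,\pm}f}-\varepsilon)\}$, and the rest of~\eqref{est.A_s_epsi} transfers unchanged. The local $p\to\infty$ statement is just $\|\langle\nabla f,\xi\rangle_\pm\|_{L^p}\to\|\langle\nabla f,\xi\rangle_\pm\|_{L^\infty}$, which is standard.

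The main obstacle is the $s\to 1^-$ limsup bound for the $L^\infty$ gauge. In the symmetric proof this relied on the universal estimate $|f(x+t\xi)-f(x)|\le t\,\|\xi\|_{\Pi^\ast_\infty f}$, coming from absolute continuity of $f$ along a.e.\ line in direction $\xi$. The asymmetric counterpart is
\[
(f(x+t\xi)-f(x))_+ \le \int_0^t \langle \nabla f(x+\tau\xi),\xi\rangle\,d\tau \le \int_0^t \langle \nabla f(x+\tau\xi),\xi\rangle_+\,d\tau \le t\,\|\xi\|_{\Pi^\ast_{\infty,+}f},
\]
using $a\le a_+$; the analogous bound for $(\cdot)_-$ holds by symmetry. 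With this pointwise inequality substituted for~\eqref{Mdiff.01}, the splitting-at-$\varrho$ chain~\eqref{Mdiff.02}--\eqref{Mdiff.03} goes through verbatim (the outer piece uses only $(a)_\pm \le |a|$ and $f\in L^1\cap L^\infty$, while the inner piece still uses the interpolation $\|\nabla f\|_{L^{p_s}}\le\|\nabla f\|^{(p-1)(1-s)}_{L^\infty}\|\nabla f\|_{L^1}$). The liminf direction follows by taking the positive/negative part in~\eqref{NUM-01}--\eqref{NUM-02}.

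Once the three asymmetric gauge convergences are established, Lemma~\ref{lem.exist.pm} supplies uniform lower bounds on $\|\xi\|_{\Pi^{\ast,s}_{p,\pm}f}$ for large $p$ and for $s$ near $1$, hence uniform upper bounds on $\|\xi\|^{-q}_{\Pi^{\ast,s}_{p,\pm}f}$ on $\Sphere$. Combined with the boundedness of $\|\xi\|_K^{-n+q}$ on $\Sphere$ guaranteed by case (a) (where $q>n$ and the exponent $-n+q$ is positive, so continuity of $\|\cdot\|_K$ suffices) or case (b) (where $q<n$ but boundedness of $K$ gives a uniform lower bound on $\|\xi\|_K$), the dominated convergence theorem applied to the formulas
\[
|\Pi^{\ast,s}_{p,\pm}f|=\tfrac1n\int_{\Sphere}\|\xi\|^{-n}_{\Pi^{\ast,s}_{p,\pm}f}\,d\xi,\qquad \vtil_q(K,\Pi^{\ast,s}_{p,\pm}f)=\tfrac1n\int_{\Sphere}\|\xi\|_K^{-n+q}\|\xi\|^{-q}_{\Pi^{\ast,s}_{p,\pm}f}\,d\xi
\]
yields all six limits in (i)--(iii), precisely as in Theorems~\ref{thm.volume}--\ref{thm.mixed}.
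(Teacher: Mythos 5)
Your proposal is essentially what the paper intends: the paper explicitly omits the proof, remarking that one first establishes asymmetric analogues of Proposition~\ref{prop.lim.gauge}, then combines the uniform bounds from Lemma~\ref{lem.exist.pm} with the dominated convergence theorem as in Theorems~\ref{thm.volume}--\ref{thm.mixed}. Your reduction, your treatment of the $p\to\infty$ limsup/liminf via $A^\pm_{s,\varepsilon}$, and your use of the interpolation chain in~\eqref{Mdiff.02}--\eqref{Mdiff.03} all follow the symmetric template accurately.

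One small inaccuracy in the display for the $s\to 1^-$ bound: the chain
\[
(f(x+t\xi)-f(x))_+ \;\le\; \int_0^t \langle \nabla f(x+\tau\xi),\xi\rangle\,d\tau \;\le\; \int_0^t \langle \nabla f(x+\tau\xi),\xi\rangle_+\,d\tau
\]
has a faulty first step, since $\int_0^t \langle \nabla f(x+\tau\xi),\xi\rangle\,d\tau = f(x+t\xi)-f(x)$ can be negative while the left side is nonnegative. The correct (and equally elementary) route is to take the positive part of both sides of the fundamental theorem of calculus first, using $\bigl(\int g\bigr)_+ \le \int g_+$:
\[
(f(x+t\xi)-f(x))_+ = \left(\int_0^t \langle \nabla f(x+\tau\xi),\xi\rangle\,d\tau\right)_+ \le \int_0^t \langle \nabla f(x+\tau\xi),\xi\rangle_+\,d\tau \le t\,\|\xi\|_{\Pi^\ast_{\infty,+}f},
\]
and analogously for the negative part. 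With that repair the estimate needed in place of~\eqref{Mdiff.01} holds as you claim, and the rest of your argument is sound.
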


\begin{proposition}\label{prop.limit.1/p.pm}
    Let $f \in W^{1, 1}(\R^n) \cap W^{1, \infty}(\R^n)$ be nonzero and let  $K$ be a bounded star body.
	Then, the following limits hold: 
	\begin{itemize}
		\item[(i)] for any fixed $s \in (0, 1)$, one has $\textstyle\lim_{p \to \infty} \vtil_{-sp} (K, \Pi^{\ast, s}_{p, \pm} \, f)^{1/p} = \dila_s(K, \Pi^{\ast, s}_{\infty, \pm} \, f)$;
		
		\item[(ii)] $\textstyle\lim_{p \to \infty} \vtil_{-p}(K, \Pi^\ast_{p, \pm} \, f)^{1/p} = \dila(K, \Pi^\ast_{\infty, \pm} \, f)$;
		
		\item[(iii)] $\textstyle\lim_{s \to 1^-} \dila_s(K, \Pi^{\ast, s}_{\infty, \pm} \, f) = \dila(K, \Pi^\ast_{\infty, \pm} \, f)$.
	\end{itemize}
\end{proposition}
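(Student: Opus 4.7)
The plan is to mirror the proofs of Theorem~\ref{thm.mixed-dila}, replacing $|f(x)-f(y)|$ (respectively $|\langle \nabla f(x),\xi\rangle|$) by its positive or negative part $(\cdot)_\pm$ throughout. By the symmetry between the $+$ and $-$ cases, it suffices to treat the $+$ case in each of \textit{(i)}, \textit{(ii)}, \textit{(iii)}.

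For \textit{(i)}, the first step is to establish, via a polar change of variables ($\eta=t\xi$, using $1$-homogeneity of $\|\cdot\|_K$), the anisotropic representation
\begin{equation*}
n\,\vtil_{-sp}(K,\Pi^{\ast,s}_{p,+}f)\;=\;p(1-s)\int_{\R^n}\!\int_{\R^n}\frac{(f(y)-f(x))_+^{p}}{\|y-x\|_K^{n+sp}}\,dx\,dy,
\end{equation*}
together with the identity $\dila_s(K,\Pi^{\ast,s}_{\infty,+}f)=\sup_{x\neq y}(f(y)-f(x))_+/\|y-x\|_K^{s}$, which follows from the definition of $\|\cdot\|_{\Pi^{\ast,s}_{\infty,+}f}$. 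Then the argument of Theorem~\ref{thm.mixed-dila}\textit{(i)} applies with only cosmetic changes: the upper bound comes from the H\"older-type splitting
$(f(y)-f(x))_+^{p}/\|y-x\|_K^{n+sp}\leq \dila_s(K,\Pi^{\ast,s}_{\infty,+}f)^{p-1}(f(y)-f(x))_+/\|y-x\|_K^{n+s}$, with the remaining integral finite by~\eqref{fini.semi}; the lower bound comes from restricting the integral to $\widehat A^{+}_{s,\varepsilon}=\{(x,y):(f(y)-f(x))_+/\|y-x\|_K^{s}>\dila_s-\varepsilon\}$, which has positive Lebesgue measure by the definition of $\dila_s$.

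For \textit{(ii)}, I would first derive the polar representation
$n\,\vtil_{-p}(K,\Pi^\ast_{p,+}f)=\int_{\R^n}\int_{\Sphere}\bigl(\langle\nabla f(x),\xi\rangle_+/\|\xi\|_K\bigr)^{p}\,\|\xi\|_K^{-n}\,d\xi\,dx$, alongside the identity $\dila(K,\Pi^\ast_{\infty,+}f)=\sup_{\xi\in\Sphere}\esssup_{x\in\R^n}\langle\nabla f(x),\xi\rangle_+/\|\xi\|_K$, and then carry out the same upper/lower dichotomy: the upper bound again via H\"older-type splitting and $\nabla f\in L^{1}(\R^n)$; the lower bound by exhibiting a product set $E\times U\subset \R^n\times\Sphere$ of positive measure on which the ratio exceeds $\dila-\varepsilon$. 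The latter uses the fact that, for each fixed $x$, $\xi\mapsto \langle\nabla f(x),\xi\rangle_+/\|\xi\|_K$ is Lipschitz in $\xi$ with a constant uniform in $x$, which holds since $\nabla f\in L^\infty(\R^n)$ and $\|\cdot\|_K$ is bounded away from zero on $\Sphere$.

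For \textit{(iii)}, I would adapt the proof of Theorem~\ref{thm.mixed-dila}\textit{(iii)} directly. The inequality $\dila(K,\Pi^\ast_{\infty,+}f)\leq\liminf_{s\to 1^-}\dila_s(K,\Pi^{\ast,s}_{\infty,+}f)$ is obtained by approximating $\langle\nabla f(x_o),\xi_o\rangle_+$ by $(f(x_o+t\xi_o)-f(x_o))_+/t$ at a near-maximizer $(x_o,\xi_o)$ and using the pointwise bound $(f(x_o+t\xi_o)-f(x_o))_+/t\leq (t\|\xi_o\|_K)^{s-1}\dila_s(K,\Pi^{\ast,s}_{\infty,+}f)$, followed by $s\to 1^-$. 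The reverse inequality is the main obstacle: I would combine a lower bound on $\vtil_{-sp}(K,\Pi^{\ast,s}_{p,+}f)$ obtained by restricting to $\widehat A^{+}_{s,\varepsilon}$ (paralleling~\eqref{est.A_hat}) with the upper bound~\eqref{esti.iii-02}---which transfers to the $+$ case because $(f(x+t\xi)-f(x))_+\leq |f(x+t\xi)-f(x)|$---then applying the $L^{1}\cap L^\infty$ splitting of the integral in $t$, the interpolation of $\|\nabla f\|_{L^{p_s}}$, and finally the successive limits $p\to\infty$, $\varrho\searrow 0$, $\varepsilon\searrow 0$, $s\to 1^-$ to close the argument.
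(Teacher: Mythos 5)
Your proposal is correct and follows the same route the paper intends: the paper omits the proof of Proposition~\ref{prop.limit.1/p.pm}, stating only that it is analogous to Theorem~\ref{thm.mixed-dila}, and your outline carries out exactly that adaptation --- replacing absolute values by positive/negative parts throughout, using $\dila_s(K,\Pi^{\ast,s}_{\infty,\pm}f)=\sup_{x\neq y}(f(x)-f(y))_\mp/\|x-y\|_K^s$, the H\"older splitting for the upper bound, the positive-measure sublevel set for the lower bound, and the observation that the upper estimate~\eqref{esti.iii-02} persists since $(\cdot)_\pm\le|\cdot|$. Your extra care in~\textit{(ii)} (establishing positivity of the measure of the near-maximizing set via the uniform Lipschitz dependence on $\xi$) is a sensible way to justify a step the paper leaves implicit, but it is not a departure from the paper's argument.
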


\subsection{Variants of $L^\infty$ P\'olya--Szeg\H{o} inequalities} 

As an application of our results, we establish variants of the P\'olya--Szeg\H{o}--type inequality in the $L^\infty$ framework associated with the $s$--fractional polar projection bodies.
This connects the anisotropic H\"older constant and the anisotropic $L^\infty$ norm of the gradient to rearrangement properties of functions, see e.g.~\cite[Corollary 8.2]{BS_2000} on the P\'olya--Szeg\H{o} inequality for the isotropic $L^\infty$--norm of the gradient.

\begin{corollary}\label{prop.PS-sup}
	Let $f \in W^{1, 1}(\R^n) \cap W^{1, \infty}(\R^n)$ be nonnegative and let
     $K$ be a bounded star body.
	Then, one has $f^\star \in W^{1, 1}(\R^n) \cap W^{1, \infty}(\R^n)$ and the following inequalities hold:
	\begin{itemize}
		\item[(i)] For any fixed $s \in (0, 1)$, one has
			\begin{align*}
				\sup_{x \neq y} \dfrac{(f(x) - f(y))_\pm}{\| x - y \|_K^s} \geq \sup_{x \neq y} \dfrac{(f^\star(x) - f^\star(y))_\pm}{\| x - y \|_{K^\star}^s}
			\end{align*}
			and
			\begin{equation}\label{PS-Holder}
				\sup_{x \neq y} \dfrac{|f(x) - f(y)|}{\| x - y \|_K^s} \geq \sup_{x \neq y} \dfrac{|f^\star(x) - f^\star(y)|}{\| x - y \|_{K^\star}^s}.
			\end{equation}
		
		\item[(ii)] One has
			\begin{align*}
				\sup_{\xi \in \Sphere} \dfrac{\| \langle \nabla f(\cdot), \xi \rangle_\pm \|_{L^\infty(\R^n)} }{\| \xi \|_K} \geq \sup_{\xi \in \Sphere} \dfrac{\| \langle \nabla f^\star(\cdot), \xi \rangle_\pm \|_{L^\infty(\R^n)} }{\| \xi \|_{K^\star}}
			\end{align*}
		and 
			\begin{align*}
				\sup_{\xi \in \Sphere} \dfrac{\| \langle \nabla f(\cdot), \xi \rangle \|_{L^\infty(\R^n)} }{\| \xi \|_K} \geq \sup_{\xi \in \Sphere} \dfrac{\| \langle \nabla f^\star(\cdot), \xi \rangle \|_{L^\infty(\R^n)} }{\| \xi \|_{K^\star}}.
			\end{align*}
	\end{itemize}

Equality in the above inequalities holds if $K$ is a centered ellipsoid and $f$ is a translate of $f^\star \circ \phi$ for some $\phi \in \mathrm{SL}(n)$.
\end{corollary}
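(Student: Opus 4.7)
The plan is to reduce Corollary \ref{prop.PS-sup} to already-established anisotropic $L^p$ P\'olya--Szeg\H{o} inequalities (both in the fractional and the classical setting) and then pass to the limit $p \to \infty$ using Theorem \ref{thm.mixed-dila} and Proposition \ref{prop.limit.1/p.pm}. The identification on the $L^\infty$ side is dictated by the identity \eqref{dila-holder},
\begin{align*}
\dila_s(K, \Pi^{\ast, s}_\infty f) \;=\; \sup_{x \neq y} \dfrac{|f(x) - f(y)|}{\| x - y \|_K^s},
\end{align*}
together with its straightforward $(\cdot)_\pm$ and Lipschitz analogues, which all follow directly from the definitions of $\dila_s$, $\dila$, and the corresponding (asymmetric) polar projection body gauge functions. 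The membership $f^\star \in W^{1,1}(\R^n) \cap W^{1,\infty}(\R^n)$ is classical, resting on equimeasurability of Schwarz symmetrization and on the fact that the Lipschitz constant does not increase under $f \mapsto f^\star$.

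For part (i), I would start from the anisotropic (possibly asymmetric) fractional P\'olya--Szeg\H{o} inequality
\begin{align*}
\int_{\R^n}\!\int_{\R^n} \dfrac{(f(y) - f(x))_{\pm}^p}{\| y - x \|_K^{n + sp}} \, dx \, dy \;\geq\; \int_{\R^n}\!\int_{\R^n} \dfrac{(f^\star(y) - f^\star(x))_{\pm}^p}{\| y - x \|_{K^\star}^{n + sp}} \, dx \, dy,
\end{align*}
together with its symmetric analogue involving $|f(x) - f(y)|$; both are of Riesz/Brascamp--Lieb--Luttinger rearrangement type and can be extracted from the framework of \cite{HL_2024}. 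Via the dual mixed volume representation recalled in Section~\ref{sec:linfty-bodies}, these rewrite as $\vtil_{-sp}(K, \Pi^{\ast, s}_{p, \pm} f) \geq \vtil_{-sp}(K^\star, \Pi^{\ast, s}_{p, \pm} f^\star)$ (and similarly without the $\pm$). Taking $p$-th roots and sending $p \to \infty$ via Theorem \ref{thm.mixed-dila}(i) and Proposition \ref{prop.limit.1/p.pm}(i) yields
\begin{align*}
\dila_s(K, \Pi^{\ast, s}_{\infty, \pm} f) \;\geq\; \dila_s(K^\star, \Pi^{\ast, s}_{\infty, \pm} f^\star),
\end{align*}
which, upon unpacking the supremum definition of $\dila_s$ against the gauge of $\Pi^{\ast, s}_{\infty, \pm} f$, is exactly the claimed H\"older-type inequality.

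Part (ii) follows by the same template with the classical anisotropic $L^p$ P\'olya--Szeg\H{o} inequality as starting point: for every $\xi \in \Sphere$,
\begin{align*}
\int_{\R^n} \langle \nabla f(x), \xi \rangle_\pm^p \, dx \;\geq\; \int_{\R^n} \langle \nabla f^\star(x), \xi \rangle_\pm^p \, dx,
\end{align*}
and its symmetric $|\cdot|$ counterpart. Integrating against $\| \xi \|_K^{-n - p}$ on $\Sphere$ produces $\vtil_{-p}(K, \Pi^\ast_{p, \pm} f) \geq \vtil_{-p}(K^\star, \Pi^\ast_{p, \pm} f^\star)$; taking $p$-th roots and invoking Theorem \ref{thm.mixed-dila}(ii) and Proposition \ref{prop.limit.1/p.pm}(ii) gives $\dila(K, \Pi^\ast_{\infty, \pm} f) \geq \dila(K^\star, \Pi^\ast_{\infty, \pm} f^\star)$, which unpacks to the stated Lipschitz inequality.

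The main obstacle is securing the asymmetric $L^p$ starting inequalities for a general, possibly non-origin-symmetric bounded star body $K$. When $K$ is origin-symmetric, the swap $x \leftrightarrow y$ reduces the $(\cdot)_\pm$ variants to the symmetric Almgren--Lieb/P\'olya--Szeg\H{o} inequalities because the kernel $\| y - x \|_K^{-(n + sp)}$ is then even; the genuinely anisotropic case is expected to follow from a Brascamp--Lieb--Luttinger-type rearrangement argument applied to the $K$-weighted kernel. Finally, the equality case is obtained by direct verification: if $K = \phi(B^n)$ for some $\phi \in \mathrm{SL}(n)$ and $f$ is a translate of $f^\star \circ \phi$, the change of variables $x \mapsto \phi^{-1}(x)$ simultaneously converts the $K$-anisotropic quantities into their isotropic $K^\star = B^n$ counterparts and sends $f$ to a translate of $f^\star$, so both sides coincide throughout.
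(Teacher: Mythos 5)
Your plan for part (i) is essentially the route the paper takes: the anisotropic asymmetric fractional P\'olya--Szeg\H{o} inequality from \cite[Theorem~11]{HL_2024}, rewritten as a dual mixed volume inequality, with a $p$-th root and the limit $p \to \infty$ via Proposition~\ref{prop.limit.1/p.pm}, then the $\max$-identity to get the symmetric statement. Note that the concern you raise about securing the asymmetric starting inequality for non-origin-symmetric $K$ is not a live obstacle: \cite[Theorem~11]{HL_2024}, the very reference the paper quotes, already provides the one-sided inequality
\[
\int_{\R^n}\!\int_{\R^n} \frac{(f(x) - f(y))_{\pm}^p}{\| x - y \|_K^{n + sp}} \, dx \, dy \ge \int_{\R^n}\!\int_{\R^n} \frac{(f^\star(x) - f^\star(y))_{\pm}^p}{\| x - y \|_{K^\star}^{n + sp}} \, dx \, dy
\]
for a general bounded star body $K$, so no separate Brascamp--Lieb--Luttinger argument is needed on your end.

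Your route for part (ii), however, has a genuine gap. The pointwise-in-$\xi$ P\'olya--Szeg\H{o} inequality you use as a starting point,
\[
\int_{\R^n} \langle \nabla f(x), \xi \rangle_\pm^p \, dx \;\ge\; \int_{\R^n} \langle \nabla f^\star(x), \xi \rangle_\pm^p \, dx \qquad \text{for every fixed } \xi \in \Sphere,
\]
is false in general. For a concrete counterexample, take $f$ depending on the first coordinate only (suitably cut off so that it lies in $W^{1,1}\cap W^{1,\infty}$); then $\nabla f$ is parallel to $e_1$ and the left side vanishes for $\xi = e_2$, while the right side does not, since $\nabla f^\star$ is radial. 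What is true is the volume/LYZ inequality $|\Pi^\ast_{p,\pm} f| \le |\Pi^\ast_{p,\pm} f^\star|$, not the gauge-function inequality $\|\xi\|_{\Pi^\ast_{p,\pm} f} \ge \|\xi\|_{\Pi^\ast_{p,\pm} f^\star}$. Moreover, even if the pointwise inequality were valid, integrating against $\|\xi\|_K^{-n-p}$ would only yield $\vtil_{-p}(K, \Pi^\ast_{p,\pm} f) \ge \vtil_{-p}(K, \Pi^\ast_{p,\pm} f^\star)$ with the \emph{same} $K$ on both sides, not the $K^\star$ you need. The paper avoids all of this by deriving (ii) directly from (i): one simply lets $s \to 1^-$ in the inequality $\dila_s(K, \Pi^{\ast,s}_{\infty,\pm} f) \ge \dila_s(K^\star, \Pi^{\ast,s}_{\infty,\pm} f^\star)$ and invokes Proposition~\ref{prop.limit.1/p.pm}--\textit{(iii)} to pass from $\dila_s(\cdot, \Pi^{\ast,s}_{\infty,\pm} \cdot)$ to $\dila(\cdot, \Pi^\ast_{\infty,\pm} \cdot)$ on both sides. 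You should replace your direct $L^p$ argument for (ii) with this limiting argument. Your treatment of the equality case by direct verification under $K = \phi(B^n)$ and $f$ a translate of $f^\star \circ \phi$ is fine and runs parallel to the paper's reference to the equality discussion in \cite[Theorem~11]{HL_2024}.
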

%
%

\begin{proof}
Thanks to \cite[Theorem 8.2, Corollary 8.2]{BS_2000}, we have $f^\star \in W^{1, 1}(\R^n) \cap W^{1, \infty}(\R^n)$.

\medskip

\textit{(i)}
It suffices to prove the inequality
\begin{equation}\label{PS-sup-plus}
	\sup_{x \neq y} \dfrac{(f(x) - f(y))_+}{\| x - y \|_K^s} \geq \sup_{x \neq y} \dfrac{(f^\star(x) - f^\star(y))_+}{\| x - y \|_{K^\star}^s}.
\end{equation}
Once this is proved, the inequality \eqref{PS-Holder} can be obtained thanks to the following identity, for any bounded star by $L$
 	\[
 		\sup_{x \neq y} \dfrac{|f(x) - f(y)|}{\| x - y \|_L^s} = \max \left\{ \sup_{x \neq y} \dfrac{(f(x) - f(y))_+}{\| x - y \|_L^s}, \sup_{x \neq y} \dfrac{(f(x) - f(y))_-}{\| x - y \|_L^s} \right\}.
 	\]	
 	
\medskip

Recall that for each $p > 1$, the anisotropic P\'olya--Szeg\H{o} inequality for $L^p$ fractional Sobolev norms (see \cite[Theorem 11]{HL_2024}) leads to
\begin{equation}\label{PS-frac}
    \begin{split}
	\dfrac{n}{p(1 - s)}\vtil_{-sp}(K, \Pi^{\ast, s}_{p, +} \, f) = & ~ \int_{\R^n} \int_{\R^n} \dfrac{(f(x) - f(y))_+^p}{\| x - y \|_K^{n + ps}} \,dx\,dy \\
    \geq & ~ \int_{\R^n} \int_{\R^n} \dfrac{(f^\star(x) - f^\star(y))_+^p}{\| x - y \|_{K^\star}^{n + ps}} \,dx\,dy \\
    = & ~  
    \dfrac{n}{p(1 - s)}\vtil_{-sp}(K, \Pi^{\ast, s}_{p, +} \, f^\star).
    \end{split}
\end{equation}
Note that the anisotropic H\"older constant can be written in terms of  dilation factor
\begin{align*}
	\sup_{x \neq y} \dfrac{(f(x) - f(y))_+}{\| x - y \|_L^s} = \dila_s(L, \Pi^{\ast, s}_{\infty, +} \, f), \quad \text{ for every star body } L.
\end{align*}
Therefore, applying Proposition \ref{prop.limit.1/p.pm}--\textit{(i)} and letting $p \to \infty$ in the inequality~\eqref{PS-frac}, we immediately get~\eqref{PS-sup-plus}.

\medskip

\textit{(ii)} We simply just combine the inequalities in \textit{(i)} and Proposition \ref{prop.limit.1/p.pm}.

Equality case follows from the equality of~\eqref{PS-frac}, see e.g \cite[Theorem 11]{HL_2024}.
 This completes the proof of Proposition~\ref{prop.PS-sup}.
\end{proof}

To end this section, the next proposition establishes variants of the affine P\'olya--Szeg\H{o} inequality for the volume of (fractional) $L^\infty$ polar projection bodies.

\begin{corollary}
Let $f \in W^{1, 1}(\R^n) \cap W^{1, \infty}(\R^n)$ be nonnegative.
Then, the following inequalities hold:
	\begin{itemize}
		\item[(i)] For any fixed $s \in (0, 1)$, one has
			\begin{align*}
				| \Pi^{\ast,s}_{\infty, \pm} \, f|^{- \frac{s}{n}} \geq  | \Pi^{\ast,s}_{\infty, \pm} \, f^\star |^{- \frac{s}{n}} \quad \text{ and } \quad | \Pi^{\ast,s}_{\infty} \, f|^{- \frac{s}{n}} \geq  | \Pi^{\ast,s}_{\infty} \, f^\star |^{- \frac{s}{n}}.
			\end{align*}
		
		\item[(ii)] One has
			\begin{align*}
				| \Pi^{\ast}_{\infty, \pm} \, f|^{- \frac{1}{n}} \geq  | \Pi^{\ast}_{\infty, \pm} \, f^\star |^{- \frac{1}{n}} \quad \text{ and } \quad | \Pi^{\ast}_{\infty} \, f|^{- \frac{1}{n}} \geq  | \Pi^{\ast}_{\infty} \, f^\star |^{- \frac{1}{n}}.
			\end{align*}		
	\end{itemize}		

Equality in the above inequalities holds if $f$ is a translate of $f^\star \circ \phi$ for some $\phi \in \mathrm{SL}(n)$.
\end{corollary}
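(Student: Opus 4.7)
The strategy is to deduce both parts by passing to the limit $p \to \infty$ in the known affine $L^p$ (fractional) Polya--Szeg\H{o} inequalities for polar projection bodies, exactly in the same spirit as Corollary \ref{prop.PS-sup}. Concretely, Haddad--Ludwig \cite[Theorem 11]{HL_2024} established the affine fractional $L^p$ Polya--Szeg\H{o} inequality, which (after identifying the appropriate dual mixed volume with the optimal choice of reference body $K$, or reading it directly from the affine isoperimetric-type reformulation) takes the form
\[
|\Pi^{\ast, s}_{p, \pm} f| \leq |\Pi^{\ast, s}_{p, \pm} f^\star| \qquad \text{and} \qquad |\Pi^{\ast, s}_{p} f| \leq |\Pi^{\ast, s}_{p} f^\star|,
\]
with equality when $f$ is a translate of $f^\star \circ \phi$ for some $\phi \in \mathrm{SL}(n)$. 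The analogous Zhang/Lutwak--Yang--Zhang/Haberl--Schuster $L^p$ affine Polya--Szeg\H{o} inequality gives the same statement with $\Pi^{\ast, s}_{p, \pm} f$ replaced by $\Pi^\ast_{p, \pm} f$ (and similarly for the symmetric bodies).

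First I would handle part (i). Fixing $s \in (0,1)$, I would apply Proposition \ref{prop.lim.pm}--(i) to the symmetrized function $f^\star$ as well as to $f$: since both $f$ and $f^\star$ belong to $W^{1,1}(\R^n) \cap W^{1,\infty}(\R^n)$ (by \cite[Corollary 8.2]{BS_2000}), the volumes $|\Pi^{\ast, s}_{p, \pm} f|$ and $|\Pi^{\ast, s}_{p, \pm} f^\star|$ converge respectively to $|\Pi^{\ast, s}_{\infty, \pm} f|$ and $|\Pi^{\ast, s}_{\infty, \pm} f^\star|$ as $p \to \infty$. Passing to the limit in the $L^p$ inequality above yields $|\Pi^{\ast, s}_{\infty, \pm} f| \leq |\Pi^{\ast, s}_{\infty, \pm} f^\star|$, and since the map $x \mapsto x^{-s/n}$ is strictly decreasing on $(0, \infty)$, this is equivalent to
\[
|\Pi^{\ast, s}_{\infty, \pm} f|^{-s/n} \geq |\Pi^{\ast, s}_{\infty, \pm} f^\star|^{-s/n}.
\]
The symmetric version follows identically by invoking Theorem \ref{thm.volume}--(i) in place of Proposition \ref{prop.lim.pm}--(i).

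For part (ii), the argument is essentially the same with a single layer of limits: starting from the classical affine $L^p$ Polya--Szeg\H{o} inequality for $\Pi^\ast_{p, \pm} f$ and $\Pi^\ast_p f$, I would invoke Proposition \ref{prop.lim.pm}--(ii) and Theorem \ref{thm.volume}--(ii) to pass to $p \to \infty$. The equality case in all four inequalities is inherited from the equality case in the $L^p$ Polya--Szeg\H{o} inequality, which is preserved under limits.

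There is no real obstacle in this proof once the preparatory asymptotic results are in place; the only subtle point is the justification that the $L^p$ inequality holds in the precise form $|\Pi^{\ast, s}_{p, \pm} f| \leq |\Pi^{\ast, s}_{p, \pm} f^\star|$. This can be obtained either by reading off the affine form of \cite[Theorem 11]{HL_2024}, or equivalently by optimizing the mixed volume inequality already used in \eqref{PS-frac} over all bounded star bodies $K$ (and then normalizing), so it does not require new ingredients.
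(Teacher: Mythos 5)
Your proposal is correct and follows essentially the same route as the paper: pass to the limit $p\to\infty$ in the affine fractional $L^p$ P\'olya--Szeg\H{o} inequality for volumes of (asymmetric) polar projection bodies, invoking Theorem~\ref{thm.volume} and Proposition~\ref{prop.lim.pm}, and inherit the equality case from the $L^p$ statement. The only minor difference is bibliographic: the paper quotes the volume form directly from \cite[Theorems~13--14]{HL_2024}, while you derive it from the dual-mixed-volume version \cite[Theorem~11]{HL_2024}, which is a valid but slightly longer path.
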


\begin{proof}
Recall that $f^\star \in W^{1, 1}(\R^n) \cap W^{1, \infty}(\R^n)$.

\medskip

\textit{(i)}
The inclusion~\eqref{inclu-01} implies that $f, f^\star \in W^{s, p}(\R^n)$ for every $s \in (0, 1)$ and $p \geq 1$.
Recall that the following inequality holds true (see \cite[Theorem 13--14]{HL_2024})
\begin{equation}\label{PS-vol}
    | \Pi^{\ast, s}_{p, \pm } \, f|^{- \frac{s}{n}} \geq |\Pi^{\ast, s}_{p, \pm} \, f^\star|^{-\frac{s}{n}} \quad \text{ and } \quad | \Pi^{\ast, s}_p \, f |^{- \frac{s}{n}} \geq |\Pi^{\ast, s}_p \, f|^{-\frac{s}{n}}, \quad \text{ for every $p > 1$}.
\end{equation}
Letting $p$ tend to $\infty$, thanks to Theorem~\ref{thm.volume} and Proposition~\ref{prop.lim.pm}, we obtain the desired inequalities.

\medskip

For similar reasons, we obtain the inequalities in \textit{(ii)} and the conclusion on the equality follows from the equality in \eqref{PS-vol}, which completes the proof.
\end{proof}

\subsection{Endpoint Lipschitz/Hölder isoperimetric-type inequalities}

For $1<p<n$, the affine Lutwak--Yang--Zhang inequality links the critical $L^{p^\ast}$ norm (with $p^\ast := \tfrac{np}{n-p}$) to the volume of the $L^p$ polar projection body of $f$ and it sharpens the classical $L^p$ Sobolev inequality.
More precisely, the following inequalities hold
\begin{equation}\label{LYZ-ineq}
    \| f \|^p_{L^\frac{np}{n - p}(\R^n)} \leq \alpha_{n, p} | \Pi^\ast_p \, f |^{-\frac{p}{n}} \leq \beta_{n, p} \| \nabla f \|_{L^p(\R^n)}^p, \quad \text{ for every } f \in W^{1, p}(\R^n),
\end{equation}
where the constants $\alpha_{n, p}$ and $\beta_{n, p}$ are sharp and have explicit formulas, see \cite{Aubin_1976, LYZ_2002, Talenti_1976}.
A fractional version of these inequalities has been recently proved by Haddad and Ludwig \cite{HL_2024}: for every $s \in (0, 1)$ and $p \in (1, n/s)$, it holds
\begin{equation}\label{HL-inequ}
    \| f \|^p_{L^{\frac{np}{n - ps}}(\R^n)} \leq \widehat\alpha_{n, p, s} | \Pi^{\ast, s}_p \, f|^{- \frac{ps}{n}} \leq \widehat\beta_{n, p, s} [f]^p_{s, p}, \quad \text{ for every } f \in W^{s, p}(\R^n),
\end{equation}
where the constants $\widehat\alpha_{n, p}$ and $\widehat\beta_{n, p}$ are sharp and have explicit formulas, see \cite{HL_2024, HL_2025}.
In \cite[Theorem~1]{HL_2024}, both parts of the affine fractional Sobolev inequality are stated under the condition $1 < p < n/s$. 
However, the restriction $p < n/s$ is only essential for the first (Sobolev-type) part.

\medskip

At the critical threshold $p=n$ in \eqref{LYZ-ineq} (or $p = n/s$ in \eqref{HL-inequ}) the Sobolev embedding breaks down, and for $p > n$ one only has Morrey embeddings into Hölder spaces.
Thus, there is no direct extension of the affine Sobolev inequalities to the endpoint $p=\infty$. 
Nevertheless, with a careful inspection, one can see that the second inequality in \eqref{LYZ-ineq} (respectively \eqref{HL-inequ}), which represents a functional affine $L^p$ isoperimetric inequality, holds for $p > n$ (respectively $p > n/s$).
In what follows, we introduce endpoint ($p=\infty$) isoperimetric-type inequalities formulated via the anisotropic H\"older constant and the associated fractional $L^\infty$ polar projection body $\Pi^{\ast,s}_{\infty} \, f$, which reduce to $\|\nabla f\|_{L^\infty}$ and $\Pi^{\ast}_{\infty} \, f$ at $s = 1$.

\medskip

Let us first recall the dual mixed volume inequality (see \cite[Section 9.5]{S_2014} and \cite[B.29]{G_2006}):
for any $q < 0$, it holds 
\begin{equation}\label{dual-mixed-ineq}
    \vtil_q(U, V) \geq |U|^{\frac{n - q}{n}} |V|^{\frac{q}{n}}, \quad \text{ for every star bodies } U, V.
\end{equation}
Equality holds if and only if $U$ and $V$ are dilates, i.e, there exists $c > 0$ such that $\rho_U  = c \rho_V$ on $\Sphere$.

\begin{corollary}[Endpoint isoperimetric--type inequality]\label{prop.endpoint}
Let $K$ be a bounded star body.
Then, for any nonzero function $f \in W^{1, 1}(\R^n) \cap W^{1, \infty}(\R^n)$, it holds
\begin{equation}\label{eq:endpoint-H}
\sup_{x \neq y} \dfrac{|f(x) - f(y)|}{\| x - y \|^s_K} \geq |K|^{\frac{s}{n}}|\Pi^{\ast, s}_\infty \, f|^{- \frac{s}{n}}, \quad \text{ for every } s \in (0, 1), 
\end{equation}
and
\begin{equation}\label{eq:endpoint-L}
\sup_{\xi \in \Sphere} \dfrac{\| \langle \nabla f(\cdot), \xi \rangle \|_{L^\infty(\R^n)}}{\| \xi \|_K} \geq
|K|^{\frac{1}{n}} |\Pi^\ast_\infty \, f|^{-\frac{1}{n}}.
\end{equation}
There is equality in~\eqref{eq:endpoint-H} and \eqref{eq:endpoint-L} if $\Pi^{\ast, s}_\infty \, f$ and $\Pi^\ast_\infty \, f$ is a dilate of $K$, respectively.
\end{corollary}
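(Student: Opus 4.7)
The plan is to derive both inequalities as $p \to \infty$ limits of the dual mixed volume inequality~\eqref{dual-mixed-ineq} applied at finite $p$, leveraging the asymptotic results of Section~\ref{sec:asymptotics}. The crucial bridge, already recorded in~\eqref{dila-holder}, is the identity
\[
\dila_s(K,\Pi^{\ast,s}_\infty f) = \sup_{x \neq y} \dfrac{|f(x) - f(y)|}{\|x - y\|_K^s},
\]
together with its gradient analogue $\dila(K,\Pi^\ast_\infty f) = \sup_{\xi\in\Sphere} \|\xi\|_K^{-1} \|\langle\nabla f(\cdot),\xi\rangle\|_{L^\infty(\R^n)}$, which follows directly from the definition of $\Pi^\ast_\infty f$. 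These reformulations identify the left-hand sides of~\eqref{eq:endpoint-H} and~\eqref{eq:endpoint-L} as $s$--scaled and classical dilation factors, which are precisely the quantities controlled by Theorem~\ref{thm.mixed-dila}.

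For~\eqref{eq:endpoint-H}, I would fix $p > 1$ and apply~\eqref{dual-mixed-ineq} with $U = K$, $V = \Pi^{\ast,s}_p f$ and $q = -sp < 0$ to obtain
\[
\vtil_{-sp}(K,\Pi^{\ast,s}_p f) \,\geq\, |K|^{\frac{n+sp}{n}} |\Pi^{\ast,s}_p f|^{-\frac{sp}{n}};
\]
taking $p$th roots gives
\[
\vtil_{-sp}(K,\Pi^{\ast,s}_p f)^{1/p} \,\geq\, |K|^{\frac{1}{p}+\frac{s}{n}} |\Pi^{\ast,s}_p f|^{-\frac{s}{n}}.
\]
Now I would let $p\to\infty$. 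By Theorem~\ref{thm.mixed-dila}--\textit{(i)} (applicable since $K$ is a bounded star body), the left-hand side converges to $\dila_s(K,\Pi^{\ast,s}_\infty f)$; by Theorem~\ref{thm.volume}--\textit{(i)} the factor $|\Pi^{\ast,s}_p f|^{-s/n}$ converges to $|\Pi^{\ast,s}_\infty f|^{-s/n}$; and trivially $|K|^{1/p+s/n} \to |K|^{s/n}$. For~\eqref{eq:endpoint-L} I would proceed in exactly the same way, applying~\eqref{dual-mixed-ineq} with $q = -p$ to the pair $(K,\Pi^\ast_p f)$ and invoking Theorem~\ref{thm.mixed-dila}--\textit{(ii)} and Theorem~\ref{thm.volume}--\textit{(ii)} in the limit.

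For the equality statement, suppose $\Pi^{\ast,s}_\infty f = \lambda K$ for some $\lambda > 0$, so $\rho_{\Pi^{\ast,s}_\infty f} = \lambda \rho_K$. A one-line calculation then gives $\dila_s(K,\Pi^{\ast,s}_\infty f) = \lambda^{-s}$ and $|K|^{s/n}|\Pi^{\ast,s}_\infty f|^{-s/n} = |K|^{s/n}(\lambda^n|K|)^{-s/n} = \lambda^{-s}$, proving equality in~\eqref{eq:endpoint-H}; the same verification settles~\eqref{eq:endpoint-L}. I do not anticipate any substantive obstacle here: the proof is essentially bookkeeping of convergences already established in Section~\ref{sec:asymptotics}. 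The only point requiring a brief check is that the hypotheses of Theorems~\ref{thm.mixed-dila} and~\ref{thm.volume} are met uniformly along the limit, which is automatic since $K$ is a bounded star body and $f \in W^{1,1}(\R^n)\cap W^{1,\infty}(\R^n)$ is nonzero.
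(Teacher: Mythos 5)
Your proof is correct. For the main inequality~\eqref{eq:endpoint-H} you follow exactly the paper's route: apply the dual mixed volume inequality~\eqref{dual-mixed-ineq} with $U=K$, $V=\Pi^{\ast,s}_p f$, $q=-sp$, take $p$th roots, and pass to the limit $p\to\infty$ via Theorem~\ref{thm.mixed-dila}--\textit{(i)}, Theorem~\ref{thm.volume}--\textit{(i)}, and identity~\eqref{dila-holder}.

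For~\eqref{eq:endpoint-L}, however, you take a genuinely different (though equally legitimate) path through the limit diagram. The paper obtains~\eqref{eq:endpoint-L} by letting $s\to 1^-$ in~\eqref{eq:endpoint-H}, invoking Theorem~\ref{thm.mixed-dila}--\textit{(iii)} and Theorem~\ref{thm.volume}--\textit{(iii)}; your argument instead applies~\eqref{dual-mixed-ineq} directly to the pair $(K,\Pi^\ast_p f)$ with $q=-p$ and lets $p\to\infty$ via Theorem~\ref{thm.mixed-dila}--\textit{(ii)} and Theorem~\ref{thm.volume}--\textit{(ii)}, which is the \textquotedblleft(1) then (4)\textquotedblright\ route in Figures~\ref{fig:two-limit-diagrams}--\ref{fig:vtil^1/p} rather than the paper's \textquotedblleft(2) then (3)\textquotedblright. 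Both routes are available precisely because the paper has established commutativity, so this is a matter of taste; yours has the small advantage of not needing~\eqref{eq:endpoint-H} as an intermediate step, while the paper's makes explicit that~\eqref{eq:endpoint-L} is the $s\to 1^-$ limit of~\eqref{eq:endpoint-H}. Your treatment of the equality case is a clean direct verification from the dilate hypothesis and is actually tidier than the paper's, which detours through compactness of the normalizing constants $c_{s,p}$; since the statement only asserts a sufficient condition, your one-line computation is exactly what is needed.
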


\begin{remark}\normalfont $ \, $
    \begin{itemize}
        \item[(1)]
        Analogous inequalities to \eqref{eq:endpoint-H}--\eqref{eq:endpoint-L} hold true for asymmetric fractional $L^p$ polar projection bodies $\Pi^{\ast, s}_{\infty, \pm} \, f$  and $\Pi^{\ast}_{\infty, \pm} \, f$.

        \item[(2)]
        In the case $K = B^n$, equality holds in~\eqref{eq:endpoint-H} and \eqref{eq:endpoint-L} if $f$ is radially symmetric. 
        This follows from the equality case of \eqref{iso-p}, see \cite[Theorem 1]{HL_2024}.
    \end{itemize} 
\end{remark}

\begin{proof}[Proof of Corollary~\ref{prop.endpoint}]
Fix $s \in (0, 1)$.
Applying the inequality~\eqref{dual-mixed-ineq} to the case $U = K$, $V = \Pi^{\ast, s}_p f$ and $q = - sp$, we have
\begin{equation}\label{iso-p}
    \vtil_{-sp}(K, \Pi^{\ast, s}_p f)^{\frac{1}{p}} \geq |K|^{\frac{1}{p} + \frac{s}{n}} |\Pi^{\ast, s}_p  f|^{- \frac{s}{n}}.
\end{equation}
Letting $p$ tend to $\infty$, thanks to Theorem~\ref{thm.mixed-dila}--\textit{(i)}, Theorem~\ref{thm.volume}--\textit{(i)} and the identity~\eqref{dila-holder}, we infer that
\begin{align*}
    \dila_s(K, \Pi^{\ast, s}_\infty  f) = \sup_{x \neq y} \dfrac{|f(x) - f(y)|}{\| x - y \|_K^s} \geq |K|^{\frac{s}{n}}|\Pi^{\ast, s}_\infty  f|^{- \frac{s}{n}}.
\end{align*}
Now, in the inequality~\eqref{eq:endpoint-H}, letting $s \to 1^-$ and using Theorem~\ref{thm.mixed-dila}--\textit{(iii)} and Theorem~\ref{thm.volume}--\textit{(iii)}, we obtain
\begin{align*}
    \sup_{\xi \in \Sphere} \dfrac{\| \langle \nabla f(\cdot), \xi \rangle \|_{L^\infty(\R^n)}}{\| \xi \|_K} = \dila(K, \Pi^{\ast}_\infty \, f) \geq |K|^{\frac{1}{n}} |\Pi^\ast_\infty \, f|^{-\frac{1}{n}}.
\end{align*}

\medskip

If the equality in \eqref{iso-p} holds, then for each $p > 1$, there exists $c_{s, p} > 0$ such that 
\[
	\| \xi \|_{\Pi^{\ast, s}_p f} = c_{s, p} \| \xi \|_K, \quad \text{ for every } \xi \in \Sphere.
\]
It follows from Proposition~\ref{prop.exist.infi.polar}--\textit{(i)} and Remark~\ref{rem.bounded-gauge}--\textit{(i)}, there exist $m_0, m_1 > 0$ independent of $p$ such that $m_0 \leq c_{s, p} \leq m_1$.
Hence, there exists $\overline c_{s, p} > 0$ such that, up to a subsequence, $c_p \to \overline c_s$ as $p \to \infty$. 
Therefore, using Proposition~\ref{prop.lim.gauge}--\textit{(i)}, we obtain
\begin{equation}\label{equa-endpoint}
	\| \xi \|_{\Pi^{\ast, s}_\infty f} = \overline c_s \| \xi \|_K \quad \text{ for a.e } \xi \in \Sphere,
\end{equation}
and the continuity of $\| \cdot \|_{\Pi^{\ast, s}_\infty \, f}$ (see Proposition~\ref{prop.exist.infi.polar}--\textit{(ii)}) implies that \eqref{equa-endpoint} holds true for every $\xi \in \Sphere$.
We finally conclude that $\Pi^{\ast, s}_\infty \, f$ is a dilate of $K$.

\medskip

Assume that for each $s \in (0, 1)$, the equality \eqref{equa-endpoint} holds for some $\overline c_s > 0$, which leads to the equality in \eqref{eq:endpoint-H} and, letting $s \to 1^-$, the equality of \eqref{eq:endpoint-L}.
On the one hand, it follows from Proposition~\ref{prop.exist.infi.polar}--\textit{(ii)} that there exists a constant $\overline m_0 > 0$ such that $\overline c_s \geq \overline m_0$ for every $s$ close to $1$.
 On the other hand, in view of Remark~\ref{rem.bounded-gauge}--\textit{(ii)}, we have $\| \xi \|_{\Pi^{\ast, s}_\infty f} \leq 1 + M^2$ for every $s \in (1/2, 1)$ and $\xi \in \Sphere$.
Hence, there exists a constant $\overline m_1  > 0$ such that $\overline c_s \leq \overline m_1$ for every $s \in (1/2, 1)$.
Therefore, up to a subsequence, we infer that $\overline c_s \to \overline c$ as $s \to 1^-$ for some $\overline c > 0$.
Combining this fact with Proposition~\ref{prop.lim.gauge}--\textit{(iii)} and the continuity of gauge functions yields
\[
	\| \xi \|_{\Pi^\ast_\infty \, f} = \overline c \| \xi \|_K \quad \text{ for every } \xi \in \Sphere,
\]
so $\Pi^\ast_\infty \, f$ is a dilate of $K$.
Corollary~\ref{prop.endpoint} is proven.
\end{proof}

\noindent\rule{5cm}{1pt} \smallskip\newline\noindent\textbf{Acknowledgements.}
The author would like to thank Aris Daniilidis, Alberto Dom\'inguez Corella and Sebasti\'an Tapia-Garc\'ia for fruitful discussions and comments that helped to improve the presentation of the present manuscript.
This research was funded in part by the Austrian Science Fund (FWF) [DOI 10.55776/P36344N].

\noindent Tr\'i Minh L\^E

\medskip

\noindent Institut f\"{u}r Stochastik und Wirtschaftsmathematik, VADOR E105-04
\newline TU Wien, Wiedner Hauptstra{\ss }e 8, A-1040 Wien\medskip
\newline\noindent E-mail: \texttt{minh.le@tuwien.ac.at}
\newline\noindent\texttt{https://sites.google.com/view/tri-minh-le} \smallskip\newline
\noindent Research supported by the FWF (Austrian Science Fund) grant \texttt{DOI 10.55776/P-36344N}.

\begin{thebibliography}{99}


\bibitem{ADM_2011}
\textsc{L. Ambrosio, G. De Philippis and L. Martinazzi},
\emph{Gamma-convergence of nonlocal perimeter functionals},
Manuscripta Math. \textbf{134} (2011), 377--403.

\bibitem{ACJ_2004}
\textsc{G. Aronsson, M. G. Crandall and P. Juutinen},
\emph{A tour of the theory of absolutely minimizing functions},
Bull. Amer. Math. Soc. (N.S.) \textbf{41} (2004), 439--505.

\bibitem{Aubin_1976}
\textsc{T. Aubin},
\emph{Probl\`emes isop\'erim\'etriques et espaces de {S}obolev},
J. Differential Geometry \textbf{11} (1976), 573--598.



\bibitem{BN_2008}
\textsc{M. Bocea and V. Nesi},
\emph{$\Gamma$--convergence of power-law functionals, variational principle in $L^\infty$, and applications},
SIAM J. Math. Anal. \textbf{39} (2008), 1550--1576.'

\bibitem{BBM_2001}
\textsc{J. Bourgain, H. Brezis and P. Mironescu},
\emph{Another look at Sobolev spaces},
Optimal control and partial differential equations,
IOS Press, Amsterdam, 2001,  439--455.

\bibitem{B_2011}
\textsc{H. Brezis},
\emph{Functional analysis, Sobolev spaces and partial differential equations},
Universitext
Springer, New York, 2011. 

\bibitem{BPG_2004}
\textsc{A. Briani, F. Prinari and A. Garroni},
\emph{Homogenization of $L^\infty$ functionals},
Math. Models Methods Appl. Sci. \textbf{14} (2004), 1761–-1784.

\bibitem{BCP_2024}
\textsc{C. Brizzi, G. Carlier and L. De Pascale},
\emph{Entropic approximation of {$\infty$}-optimal transport problems},
Appl. Math. Optim. \textbf{90} (2024), Paper No. 10, 28 pp.

\bibitem{BS_2000}
\textsc{F. Brock and A. Y. Solynin},
\emph{An approach to symmetrization via polarization},
Trans. Amer. Math. Soc. \textbf{352} (2000), 1759--1796.

\bibitem{B_2009}
\textsc{A. Burchard},
\emph{A short course in rearrangement inequalities},
\newblock 2009, available at \url{https://www.math.utoronto.ca/almut/rearrange.pdf}.

\bibitem{CP_2005}
\textsc{P. Cardaliaguet and F. Priani},
\emph{Supremal representation of {$L^\infty$} functionals},
Appl. Math. Optim. \textbf{52} (2005), 129--141.

\bibitem{CLM_2012}
\textsc{A. Chambolle, E. Lindgren and R. Monneau},
\emph{A H\"older infinity Laplace},
ESAIM Control Optim. Calc. Var. \textbf{18} (2012), 799–-835.

\bibitem{CPP_2004}
\textsc{T. Champion, T. De Pascale and F. Prinari},
\emph{{$\Gamma$}-convergence and absolute minimizers for supremal functionals},
ESAIM Control Optim. Calc. Var. \textbf{10} (2004), 14--27.


\bibitem{CLKNP_2023}
\textsc{V. Crismale, L. De Luca, A. Kubin, A. Ninno and M. Ponsiglione}, 
\emph{The variational approach to {$s$}-fractional heat flows and the limit cases {$s\to0^+$} and {$s\to1^-$}},
J. Funct. Anal. \textbf{248} (2023), Paper No. 109851, 38 pp.

\bibitem{DEZ_2024}
\textsc{L. D'Elia, M. Eleuteri and E. Zappale},
\emph{Homogenization of supremal functionals in the vectorial case (via {$L^p$}-approximation)},
Anal. Appl. (Singap.) \textbf{22} (2024), 1255–-1302.

\bibitem{G_2006}
\textsc{R. J. Gardner},
\emph{Geometric Tomography},
Second edition,
Encyclopedia Math. Appl., 58,
Cambridge University Press, New York, 2006. 

\bibitem{KZ_2020}
\textsc{C. Kreisbeck and E. Zappale}, 
\emph{Lower semicontinuity and relaxation of nonlocal {$L^\infty$}-functionals},
Calc. Var. Partial Differential Equations \textbf{59} (2020), Paper No. 138, 36 pp.


\bibitem{HS_2009}
\textsc{C. Haberl and F. E. Schuster},
\emph{Asymmetric affine $L_p$ Sobolev inequalities},
J. Funct. Anal. \textbf{257} (2009), 641--658. 

\bibitem{HL_2024}
\textsc{J. Haddad and M. Ludwig},
\emph{Affine fractional $L^p$ Sobolev inequalities},
Math. Ann. \textbf{388} (2024),  1091-–1115.

\bibitem{HL_2025} 
\textsc{J. Haddad and M. Ludwig},
\emph{Affine fractional Sobolev and isoperimetric inequalities},
J. Differential Geometry \textbf{129} (2025), 695-724.

\bibitem{L_2014}
\textsc{M. Ludwig},
\emph{Anisotropic fractional Sobolev norms},
Adv. Math. \textbf{252} (2014), 150--157.

\bibitem{Lutwak_75}
\textsc{E. Lutwak},
\emph{Dual mixed volumes},
Pacific J. Math. \textbf{58} (1975), 531--538.

\bibitem{Lutwak_79}
\textsc{E. Lutwak},
\emph{Mean dual and harmonic cross--sectional measures},
Ann. Mat. Pura Appl. \textbf{119} (1979), 139--148.

\bibitem{Lutwak_88}
\textsc{E. Lutwak},
\emph{Intersection bodies and dual mixed volumes},
Adv. in Math. \textbf{71} (1988), 232--261.

\bibitem{LYZ_2002}
\textsc{E. Lutwak, D. Yang and G. Zhang},
\emph{Sharp affine $L_p$ Sobolev inequalities},
J. Differ. Geom. \textbf{62} (2002), 17--38.

\bibitem{M_2014}
\textsc{D. Ma},
\emph{Asymmetric anisotropic fractional Sobolev norms},
Arch. Math, \textbf{103} (2014), 167--175.

\bibitem{MS_2002}
\textsc{V. Maz$'$ya and T. Shaposhnikova,}
\emph{On the {B}ourgain, {B}rezis, and {M}ironescu theorem concerning limiting embeddings of fractional {S}obolev spaces},
J. Funct. Anal, \textbf{195} (2002), 230--238.

\bibitem{NPV_2012}
\textsc{E. Di Nezza, G. Palatucci and E. Valdinoci},
\emph{Hitchhiker{'}s guide to the fractional Sobolev spaces},
Bull. Sci. Math. \textbf{134} (2012), 521--573.

\bibitem{S_2014}
\textsc{R. Schneider},
\emph{Convex bodies: the Brunn-Minkowski theory},
Second expanded edition, Encyclopedia Math. Appl., 151
Cambridge University Press, Cambridge, 2014.

\bibitem{Talenti_1976}
\textsc{G. Talenti},
\emph{Best constant in Sobolev inequality},
Ann. Mat. Pura Appl. \textbf{110} (1976), 353--372.

\bibitem{Yu_2006}
\textsc{Y. Yu},
\emph{{$L^\infty$} variational problems and {A}ronsson equations},
Arch. Ration. Mech. Anal. \textbf{182} (2006), 
153--180.

\bibitem{Zhang_1999}
\textsc{G. Zhang},
\emph{The affine {S}obolev inequality},
J. Differential Geom. \textbf{53} (1999), 183--202.

\end{thebibliography}
\end{document}